\documentclass[11pt]{amsart}

\usepackage{amssymb}
\usepackage{amsmath}
\usepackage{mathrsfs}
\usepackage{setspace}
\usepackage{mathtools}
\usepackage{verbatim}
\usepackage{lipsum}
\usepackage{amsthm}
\usepackage{tikz-cd}
\usepackage{hyperref}
\usepackage[all]{xy}
\usepackage{enumitem}
\usepackage{xspace}

\newtheorem{theorem}{Theorem}[section]
\newtheorem{lemma}[theorem]{Lemma}
\newtheorem{corollary}[theorem]{Corollary}

\newtheorem{remark}[theorem]{Remark}

\newtheorem{proposition}[theorem]{Proposition}
\newtheorem{defn}[theorem]{Definition}

\newtheorem{problem}{Problem}
\numberwithin{equation}{section}

\newcommand{\bb}[1]{\mathbb{#1}}

\newcommand{\cal}[1]{\mathcal{#1}}
\newcommand{\ff}[1]{\mathfrak{#1}}

\newcommand{\mftq}{\mathcal F_{\mathcal X^{(4)}}}
\newcommand{\mftqe}{\mathcal F_{\mathcal X_{\bar{\eta}_1}^{(4)}}}
\newcommand{\mfttt}{\mathcal F_{\mathcal X^{(4)}_t}}
\newcommand{\mxttt}{\mathcal X^{(4)}_t}
\newcommand{\mxtq}{\mathcal X^{(4)}}
\newcommand{\mxtqe}{\mathcal X_{\bar{\eta}_1}^{(4)}}

\newcommand{\alc}{adjoint log canonical\xspace}
\newcommand{\acanonical}{adjoint canonical\xspace}
\newcommand{\ccurves}[1]{\overline{NE(#1)}}

\DeclareMathOperator{\bir}{Bir}
\DeclareMathOperator{\autom}{Aut}
\DeclareMathOperator{\volume}{vol}

\makeatletter
\@namedef{subjclassname@2020}{\textup{2020} Mathematics Subject Classification}
\makeatother

\begin{document}
\title[Effective generation for foliated surfaces]{Effective generation for foliated surfaces: results and applications}
\date{\today}

\author{Calum Spicer}
\address{Department of Mathematics, King's College London, London WC2R 2LS, United Kingdom.}
\email{calum.spicer@kcl.ac.uk}

\author{Roberto Svaldi}
\address{EPFL, SB MATH-GE, MA B1 497 (B\^{a}timent MA), Station 8, CH-1015 Lausanne, Switzerland.}
\email{roberto.svaldi@epfl.ch}

\begin{abstract}
We explore the birational structure and invariants of a foliated surface $(X, \cal F)$ in terms of the adjoint divisor $K_{\cal F}+\epsilon K_X$, $0< \epsilon \ll 1$.  
We then establish a bound on the automorphism group of an adjoint general type foliated surface $(X, \mathcal F)$,
provide a bound on the degree of a general curve invariant by an algebraically integrable foliation
on a surface and prove that the set of $\epsilon$-adjoint canonical models of foliations
of general type and with fixed volume form a bounded family.
\end{abstract}

\keywords{Foliations on surfaces, Minimal Model Program, Singularities, Effective birationality}
\subjclass[2020]{14E30, 37F75, 32S65}
\thanks{RS was partially supported from the European Union's Horizon 2020 research and innovation programme under the Marie Sk\l{}odowska-Curie grant agreement No. 842071.}

\maketitle

\setcounter{tocdepth}{1} 
\tableofcontents

\section{Introduction}

A central challenge in the study of the birational geometry of varieties is to understand the behavior 
of the pluricanonical maps
\begin{align*}
\phi_{\vert mK_X \vert}\colon X \dashrightarrow \bb PH^0(X, mK_X)
\end{align*}
as a function of $m \in \mathbb N$, for those varieties which admit non-trivial pluricanonical forms.  

In this paper, we are interested in studying this question for rank one foliations on surfaces $(X,\cal F)$.  
Already for surface foliations the problem of understanding the maps induced by the sections of 
$mK_\mathcal{F}$ appears to be quite challenging.
To remedy this issue, we instead consider a slight perturbation of this problem: namely, we aim to understand the behavior of the maps
\begin{align*}
\phi_{\vert mK_{\cal F}+nK_X\vert } \colon 
X \dashrightarrow 
\mathbb P H^0(X, mK_{\cal F}+nK_X), 
\qquad m \gg n>0.
\end{align*}
Along the way, we prove several new results on the birational structure of foliations in terms of the adjoint divisors $K_{\cal F}+\epsilon K_X$, $0< \epsilon \ll 1$.
Considering adjoint divisors of this form is a natural approach to the study of foliated varieties as it allows us
to apply classical results on the positivity of the canonical bundle of varieties that may not hold if one just considers 
the canonical bundle of the foliation, cf.~\cite{PS16}.

Among other applications, these new structural results provide bounds on the automorphism groups of foliations (Theorem~\ref{thm_auto_intro}),
bounds on the degree of curves invariant by an algebraically integrable foliation (Theorem~\ref{thm_degree_bound_intro}).
Finally, we also prove a boundedness results for generalizations of surface foliations of general type, that constitutes important progress towards the construction of a moduli space of this class of foliations (Theorem~\ref{thm_bdd_intro}).

\subsection{Adjoint MMP}

Our first main result is the proof of the existence and termination of the MMP for divisors of the form $K_{\cal F}+\epsilon K_X$, for $0<\epsilon \ll 1$.  
While the existence (and termination) of the MMP for $K_X$ is classical and the MMP for $K_{\cal F}$ is well known,~\cite{McQuillan08}, it is not a priori clear that the sum of these results automatically  implies that one can run a $(K_{\cal F}+\epsilon K_X)$-MMP.
Moreover, even assuming that it was possible to run such MMP, it is not a priori clear how to bound the singularities of the foliation and variety on the minimal model.

\begin{theorem}[= Theorem~\ref{thm_adj_mmp}]
\label{thm_intro_mmp}
Let $X$ be a smooth projective surface and $\cal F$ a rank one foliation with canonical singularities.
Then, for any $0<\epsilon<\frac{1}{5}$
there exists a birational morphism $\varphi\colon X \rightarrow Y$ such that either
\begin{enumerate}
\item $K_{\cal G}+\epsilon K_Y$ is nef, where $\cal G = \varphi_*\cal F$; or,

\item there exists a morphism $f\colon Y \rightarrow Z$ auch that $\rho(Y/Z) = 1$ and $-(K_{\cal G}+\epsilon K_Y)$ is $f$-ample.
\end{enumerate}

Moreover, $Y$ has klt singularities and $\cal G$ has log canonical singularities.
\end{theorem}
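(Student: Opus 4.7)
The plan is to run the MMP directly for the adjoint divisor $K_{\cal F}+\epsilon K_X$ on $X$, producing a sequence of extremal contractions terminating in either the nef conclusion (1) or a Mori-type fibration as in (2). The inductive invariant to preserve at each stage is that the underlying surface remains klt, the foliation remains log canonical, and the adjoint divisor behaves well under pushforward.

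First, combining the classical Mori cone theorem for surfaces with the foliated cone theorem of McQuillan and Brunella, whenever $K_{\cal G}+\epsilon K_Y$ fails to be nef one extracts a $(K_{\cal G}+\epsilon K_Y)$-negative extremal ray $R=\bb R_{\geq 0}[C]$. Each such $R$ is of one of two types: either \emph{foliated}, with $K_{\cal G}\cdot C<0$, which can be contracted by McQuillan's foliated MMP; or \emph{classical}, with $K_{\cal G}\cdot C\geq 0$ but $K_Y\cdot C<0$, which is a $K_Y$-negative extremal ray on the klt surface $Y$, contractible via the classical surface MMP. Since $\epsilon$ is small, the classical case is strongly constrained: up to the klt corrections $C$ must essentially be a $(-1)$-curve not invariant under $\cal G$ meeting it transversely at a smooth point.

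Second, for each contraction $\psi\colon Y\to Y'$ produced in the process, I would compute the discrepancies $a(E,\cal G)+\epsilon\, a(E,Y)$ of the exceptional divisors and verify: (i) $Y'$ is klt, and (ii) the pushforward $\cal G':=\psi_*\cal G$ is log canonical. For classical contractions of a non-invariant $(-1)$-curve the image point is smooth and $\cal G'$ picks up at worst a reduced singularity, hence is automatically log canonical. For foliated contractions one appeals to the explicit classification of log canonical surface foliation singularities due to Mendes, Brunella, and McQuillan to read off the singularity type of $\cal G'$ and to control the $K_{Y'}$-discrepancies. Termination of the MMP is immediate since each step strictly decreases the Picard number.

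The main technical obstacle is the joint discrepancy bookkeeping: one must show that for every exceptional divisor $E$ extracted over the output of the MMP, $a(E,\cal G')+\epsilon\, a(E,Y')\geq -1$ while $a(E,Y')>-1$. This is where the threshold $\epsilon<\tfrac{1}{5}$ enters, presumably forced by a worst-case chain of foliation-canonical singularities whose minimal resolution produces a Hirzebruch--Jung string of exceptional curves with escalating $K_Y$-discrepancies that must be partially absorbed by the foliated discrepancies. Matching this chain against the explicit classification of log canonical surface foliation singularities and extracting the sharp constant $\tfrac{1}{5}$ is the heart of the proof.
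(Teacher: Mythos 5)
Your skeleton coincides with the paper's: run the MMP for $K_{\cal F}+\epsilon K_X$, split each negative extremal ray into a foliated case ($K_{\cal F}$-negative, contracted by the foliated MMP) and a classical case ($K_X$-negative, contracted by the classical MMP), and induct on the Picard number. However, the two places where the real content lies are missing or wrong. First, the existence of the foliated contraction is not automatic: the contraction theorem you invoke (McQuillan, \cite[Theorem 8.17]{Spicer17}) requires the foliation to be \emph{canonical} near the contracted curve, whereas along the adjoint MMP one only retains \emph{log canonical} foliation singularities, and strictly log canonical points can genuinely appear ($\epsilon$-adjoint canonical does not imply foliation-canonical). Your proposal never rules out that the $K_{\cal G}$-negative extremal curve passes through such a point. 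The paper deals with this via Lemma~\ref{lem_lc_mmp}: using an F-dlt modification and Reeb stability, a $K_{\cal G}$-negative extremal curve through a strictly log canonical singularity moves in a family covering the surface, so the divisor being contracted against is not pseudo-effective; hence in the pseudo-effective case the extremal curve avoids strictly lc points and the contraction exists, while in the non-pseudo-effective case one obtains the Mori fibre space conclusion directly.

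Second, the singularity bookkeeping and the provenance of $\frac{1}{5}$ are not as you describe. Your claim that the classical rays are ``essentially non-invariant $(-1)$-curves'' is unjustified (an $\cal F$-invariant curve can perfectly well be $K_X$-negative and $K_{\cal F}$-nonnegative), and the paper does not prove preservation of foliation log canonicity by a contraction-by-contraction discrepancy analysis. Instead, the $(\epsilon,0)$-adjoint log canonical condition is preserved under each step by the negativity lemma (Lemma~\ref{lem_ed_sing_pres}), and Lemma~\ref{lem_e-lc_thresh} together with Proposition~\ref{cor_bound2} converts this back into log canonicity of the foliation on the new model at every stage. That conversion is precisely where $\epsilon<\frac{1}{5}$ enters, and it has nothing to do with Hirzebruch--Jung strings of quotient singularities: a non-log-canonical foliation singularity yields, after at most three blow-ups, a singular point of the foliation with vanishing linear part; blowing that up gives foliation discrepancy $\leq -(\iota(E)+1)$ while $a(E,X)\leq 4$ (Lemma~\ref{lem_refined_discrep_calc}), and the inequality $-(\iota(E)+1)+4\epsilon<-(\iota(E)+\epsilon)$, i.e.\ $\epsilon<\frac{1}{5}$, then contradicts the $(\epsilon,0)$-adjoint log canonical property. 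Without these two ingredients (Lemma~\ref{lem_lc_mmp} to legitimize the foliated contractions, and the alc-preservation plus threshold computation to keep $\cal G$ log canonical and $Y$ klt, refined further in Corollary~\ref{cor_bounded_sing}), your induction does not close.
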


We also prove the existence of $\epsilon$-adjoint canonical models.

\begin{theorem}[= Corollary~\ref{cor_can_model}]
\label{thm_intro_can_model}
Notation as in Theorem \ref{thm_intro_mmp}.  Suppose in addition that $K_{\cal F}$ is big.
Then, there exists a birational morphism $p\colon (Y, \cal G) \rightarrow (Y_{can}, \cal G_{can})$
such that
\begin{enumerate}
\item $Y_{can}$ is projective;

\item $K_{\cal G_{can}}+\epsilon K_{Y_{can}}$ is an ample $\bb Q$-Cartier divisor; and 

\item $Y_{can}$ has klt singularities and $\cal G_{can}$ has log canonical singularities.
\end{enumerate}
\end{theorem}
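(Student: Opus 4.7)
The plan is to run the adjoint MMP of Theorem~\ref{thm_intro_mmp} and then take the ample model of the resulting nef and big adjoint divisor. I would first apply the $(K_{\cal F}+\epsilon K_X)$-MMP to obtain $\varphi\colon X \to Y$ with $\cal G := \varphi_*\cal F$, $Y$ klt, and $\cal G$ log canonical, landing in either case (1) $D_Y := K_{\cal G}+\epsilon K_Y$ nef, or case (2) a Mori fibration $f\colon Y \to Z$. To rule out case (2), I would use the bigness of $K_{\cal F}$: since bigness is an open condition in $N^1(X)_{\bb R}$, $K_{\cal F}+\epsilon K_X$ is big on $X$ for $\epsilon$ small, and bigness is preserved by the birational MMP steps, so $D_Y$ is big on $Y$. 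In case (2), a general fiber $F$ of $f$ would satisfy $D_Y \cdot F < 0$, contradicting pseudo-effectivity (general fibers form a covering family). Hence (1) holds, and $D_Y$ is nef and big on the $\bb Q$-factorial klt surface $Y$.

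Next, I would apply the Kawamata--Shokurov base point free theorem. Bigness of $K_{\cal F}$ transfers to bigness of $K_{\cal G}$ via the identity $h^0(Y, mK_{\cal G}) = h^0(X, mK_{\cal F})$, which follows from $\varphi_*\cal O_X(mE) = \cal O_Y$ for the effective $\varphi$-exceptional divisor $E := K_{\cal F} - \varphi^*K_{\cal G}$ (present because $\cal F$ has canonical singularities). By Kodaira's lemma, write $K_{\cal G} \sim_{\bb Q} A + E'$ with $A$ ample and $E' \geq 0$; for small $\delta > 0$, the pair $(Y, \delta E')$ remains klt. For $a > 1/\epsilon$,
\[
aD_Y - (K_Y + \delta E') = (a - 1/\epsilon) D_Y + (1/\epsilon) A + (1/\epsilon - \delta) E',
\]
which, after a standard perturbation argument, yields the nef-and-big hypothesis of the base point free theorem. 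Thus $D_Y$ is semi-ample, and the associated morphism $p\colon Y \to Y_{can}$ is birational (since $D_Y$ is big), contracting exactly the curves $C$ with $D_Y \cdot C = 0$.

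Setting $\cal G_{can} := p_*\cal G$, projectivity of $Y_{can}$ is automatic, while $K_{\cal G_{can}}+\epsilon K_{Y_{can}} = p_* D_Y$ is $\bb Q$-Cartier (by $\bb Q$-factoriality of the klt surface $Y_{can}$) and ample, as it descends to the ample class on $Y_{can}$ whose pullback is $D_Y$. Klt of $Y_{can}$ and log canonicity of $\cal G_{can}$ follow because $p$ contracts only $D_Y$-trivial curves $E_i$, which by Hodge index have negative-definite intersection matrix; the discrepancy equation $K_Y \cdot E_i = \sum_j a_j (E_j \cdot E_i)$, combined with the foliated positivity $K_{\cal G} \cdot E_i \geq 0$ on the relevant invariant curves, forces all $a_j > -1$, so klt and lc are preserved. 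The main obstacle is the base point free step: because $D_Y$ is not of the form $K_Y + \Delta$ for an effective $\Delta$, one must exploit the bigness of the foliated canonical class $K_{\cal G}$ to satisfy the Kawamata--Shokurov hypothesis; the remaining manipulations are routine surface MMP arguments.
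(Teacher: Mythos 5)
There is a genuine gap at the heart of your argument: the semi-ampleness step. The Kawamata--Shokurov theorem needs a klt pair $(Y,B)$ and an integer $a$ with $aD_Y-(K_Y+B)$ \emph{nef} and big, and your decomposition
\[
aD_Y-(K_Y+\delta E') \;=\; \Bigl(a-\tfrac1\epsilon\Bigr)D_Y+\tfrac1\epsilon A+\Bigl(\tfrac1\epsilon-\delta\Bigr)E'
\]
only exhibits it as nef $+$ ample $+$ effective, which is big but not nef. The "standard perturbation" does not exist here: absorbing the effective part into the boundary would force a boundary of the shape $\tfrac1\epsilon E'$, whose coefficients can be arbitrarily large, destroying the klt hypothesis; discarding it destroys nefness exactly along the $D_Y$-trivial curves, where $K_{\cal G}$ (hence $E'$) may well be negative. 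This is not a technicality one can wave away, because your argument uses nothing about the foliation beyond bigness of $K_{\cal G}$, and with $\epsilon=0$ the analogous statement is false: $K_{\cal F}$ nef and big need not be semi-ample, and McQuillan's canonical models are only algebraic spaces precisely because elliptic Gorenstein leaves contract non-projectively. Any correct proof must therefore analyse the adjoint-trivial curves themselves, which is what the paper's proof of Corollary~\ref{cor_can_model} does: each curve $C$ with $K_{(Y,\cal G,\Gamma),\epsilon}\cdot C=0$ has $C^2<0$ and is either $(K_{\cal G}+\Gamma_{\text{n-inv}})$-negative (in which case $\cal G$ is canonical near $C$ --- otherwise $C$ would move by Lemma~\ref{lem_lc_mmp} --- so the foliated contraction theorem applies), or $(K_Y+\Gamma)$-negative, or trivial against both, in which case one contracts via a $(K_Y+tC)$-negative contraction for $0<t\ll1$; each of these contractions has projective target, and repeating curve-by-curve gives the ample model, with the klt/lc claims checked by the argument of Corollary~\ref{cor_bounded_sing}.

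Two secondary points. First, your justification that $E:=K_{\cal F}-\varphi^*K_{\cal G}$ is effective "because $\cal F$ has canonical singularities" is not valid: a step of the adjoint MMP realized as a $K_X$-negative contraction can contract a curve on which $K_{\cal F}$ is positive, producing a negative coefficient; fortunately bigness of $K_{\cal G}$ follows anyway, simply because sections of $mK_{\cal F}$ push forward, so $h^0(Y,mK_{\cal G})\geq h^0(X,mK_{\cal F})$. Second, your sketch that the final contraction preserves klt and log canonical singularities assumes $K_{\cal G}\cdot E_i\geq 0$ on the contracted curves, which fails in the case where the trivial curve is $K_{\cal G}$-negative; the paper instead deduces the singularity control from the $(\epsilon,\delta)$-adjoint log canonical condition together with the mechanism of Corollary~\ref{cor_bounded_sing} and Theorem~\ref{thm_adj_mmp}.
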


This $\epsilon$-adjoint canonical model must be contrasted with McQuillan's notion of a canonical model
of a foliation where the underlying space is, a priori,  only an algebraic space, and its projectivity is not known.
Our notion of an $\epsilon$-\acanonical model should also be compared with the minimal partial du Val resolution of the canonical model of a surface foliation, see~\cite{Chen19}.

We are also able to provide a precise
statement on the singularities of the underlying variety 
which arise in this MMP, see Corollary~\ref{cor_bounded_sing}.  
This control on the singularities which arise in the course of the MMP is one of the key advantages of working with adjoint foliated divisors rather than simply with the canonical divisor of a foliation.

\subsection{Boundedness and effective birationality}

Our next main result is a boundedness result for $\epsilon$-\acanonical models
of foliations of general type.

\begin{theorem}[= Theorem \ref{thm_bdd_ample_models}]
\label{thm_bdd_intro}
There exists a universal real constant $\tau > 0$ such that the following statement holds:
\\
Fix positive real numbers $C, 0<\epsilon <\tau$.
The set of foliated pairs
\begin{align*}
\mathcal{M}_{2, \epsilon, C}:= &
\{
(X, \mathcal F) \ | \ X 
\text{ is a projective klt surface}, 
\ \cal F \text{ is rank one}, \\
& (X, \mathcal F) \text{ is an $\epsilon$-\acanonical foliated pair, $K_{\cal F}$ is big}, 
\\ &
K_{\cal F}+\epsilon K_X \text{ is ample, and } (K_{\cal F}+\epsilon K_X)^2 \leq C
\}
\end{align*}
forms a bounded family.
\end{theorem}

We refer to Section \ref{s_ed_alc_defns} for the precise definition of $\epsilon$-\acanonical, 
but we remark here that it is a natural assumption on the singularities of $(X, \mathcal F)$.
Analogous results for a more restricted class of foliated pairs have recently appeared also in~\cite{Chen21}.

The key technical ingredient in the above statement is the following
effective birationality statement
which follows from our results on the MMP 
and some new results of Birkar on adjoint linear series,~\cite{Bir20}.

\begin{theorem}[= Corollary~\ref{cor_main_corollary}]
\label{thm_intro_bnds}
Let $\tau > 0$ be the constant whose existence is established in Theorem~\ref{thm_bdd_intro}.
Then for all $0<\epsilon<\tau$ there exists a positive integer 
$M = M(\epsilon)$
such that the following statement holds:
\\
Let $X$ be a smooth projective surface and let $\cal F$ be a rank one foliation on $X$ with canonical singularities.
Suppose that $K_{\mathcal F}$ is big.  
Then 

\begin{enumerate}
\item $K_{\mathcal F}+\epsilon K_X$ is big; and

\item $|M(K_{\cal F}+\epsilon K_X)|$ defines a birational map.
\end{enumerate}
\end{theorem}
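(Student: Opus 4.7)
The plan is to reduce the theorem to Birkar's effective birationality for adjoint linear series~\cite{Bir20} applied on the $\epsilon$-adjoint canonical model produced by Theorem~\ref{thm_intro_can_model}. The universal threshold $\tau$ will arise as the minimum of $1/5$ (imposed by Theorem~\ref{thm_intro_mmp}) and any additional smallness required by Birkar's input. I proceed in two stages: first verify the bigness statement (1), which is also a hypothesis needed to invoke Theorem~\ref{thm_intro_can_model}; then use (1) to pass to the canonical model and apply~\cite{Bir20} to extract $M(\epsilon)$.

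For (1), run the adjoint MMP of Theorem~\ref{thm_intro_mmp} to obtain $\varphi\colon X\to Y$ with $\cal G=\varphi_*\cal F$. Since $\cal F$ has canonical singularities, bigness of $K_{\cal F}$ is inherited by $K_{\cal G}$. In alternative~(1) of the MMP output, $K_{\cal G}+\epsilon K_Y$ is nef and $K_{\cal G}$ is big; intersecting against the big class $K_{\cal G}$ gives $(K_{\cal G}+\epsilon K_Y)^2>0$, hence bigness. Alternative~(2), a Mori fibration $f\colon Y\to Z$ with $-(K_{\cal G}+\epsilon K_Y)$ $f$-ample, must be ruled out using bigness of $K_{\cal F}$: restricting to a general fiber $F$ of $f$ shows $(K_{\cal G}+\epsilon K_Y)|_F$ is negative, and combining this with the canonical bundle formula for $\cal G$ relative to $f$ yields a contradiction. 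One must treat separately the case in which $\cal G$ is tangent to the fibers of $f$ (so $\cal G$ is algebraically integrable with leaves of low Kodaira dimension) and the case in which it is transverse.

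Once bigness is established, Theorem~\ref{thm_intro_can_model} yields $p\colon(Y,\cal G)\to(Y_{can},\cal G_{can})$ with $K_{\cal G_{can}}+\epsilon K_{Y_{can}}$ ample, $Y_{can}$ klt with singularities bounded uniformly in $\epsilon$ via Corollary~\ref{cor_bounded_sing}, and $\cal G_{can}$ lc. This places $(Y_{can},\cal G_{can},\epsilon)$ inside a bounded class of klt surface pairs carrying a big and ample adjoint divisor, to which Birkar's effective birationality theorem~\cite{Bir20} applies, producing a universal $M=M(\epsilon)$ such that $|M(K_{\cal G_{can}}+\epsilon K_{Y_{can}})|$ defines a birational map. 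Since $p\circ\varphi\colon X\dashrightarrow Y_{can}$ is birational and adjoint divisors pull back compatibly (with at worst effective corrections from the foliated discrepancies), the same bound $M$ makes $|M(K_{\cal F}+\epsilon K_X)|$ birational on $X$. The hard part will be Step~1 — controlling bigness of $K_{\cal F}$ through the adjoint MMP, and especially excluding the Mori fibration alternative — since the rest is a formal combination of the structural results of the paper with Birkar's boundedness machinery.
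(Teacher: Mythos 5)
Your reduction of part (2) to Birkar's result is in the spirit of the paper (which applies Lemma~\ref{lem_main_bound} on the output of the adjoint MMP, using the uniform $\eta$-lc bound of Corollary~\ref{cor_bounded_sing}), but your Step~1 — bigness — contains the real gap, and the plan you sketch for it would fail. First, the nef case: from $K_{\cal G}+\epsilon K_Y$ nef and $K_{\cal G}$ big you cannot conclude $(K_{\cal G}+\epsilon K_Y)^2>0$ by ``intersecting against the big class $K_{\cal G}$''; the cross term $\epsilon\,(K_{\cal G}+\epsilon K_Y)\cdot K_Y$ has no sign, since $K_Y$ need not be pseudoeffective. The correct elementary observation is weaker: if $K_{\cal F}+\tau K_X$ is pseudoeffective, then $K_{\cal F}+\epsilon K_X$ is big for all $\epsilon<\tau$, being a convex combination of the big divisor $K_{\cal F}$ with a pseudoeffective divisor. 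But this only shifts the problem: to get a \emph{universal} $\tau$ you must deal with the foliated surfaces for which $K_{\cal F}+\epsilon K_X$ fails to be pseudoeffective, i.e.\ exactly the Mori fibre space alternative, and this alternative cannot be ``ruled out'' by restricting to a general fibre and invoking a canonical bundle formula. There is no contradiction to derive: on a Picard-rank-one klt Fano output $Y$ one merely gets $0<K_{\cal G}\cdot H<\epsilon(-K_Y)\cdot H$, which is numerically consistent; the whole point of the theorem is that the pseudoeffective threshold of $K_X$ against $K_{\cal F}$ is bounded below uniformly, and assuming the fibration case is impossible for every $\epsilon<1/5$ amounts to assuming (an effective form of) the statement to be proved. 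If your exclusion argument worked, the constant $\tau$ would be effective, contrary to the paper's own open problem on determining $\tau$.

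The paper instead \emph{exploits} the Mori fibre space case rather than excluding it: by Theorem~\ref{thm_adj_mmp} and Corollary~\ref{cor_bounded_sing} the output is an $\eta$-lc Fano surface with $\eta$ depending only on $\epsilon$, hence lying in a bounded family (Proposition~\ref{prop_boundary_in_finite_set}); boundedness then produces a smaller uniform threshold at which the adjoint divisor on the model is big. Even this is not enough by itself, because the model need not be adjoint canonical at the smaller threshold, so sections do not automatically lift back to $X$: the obstruction sits at the strictly log canonical foliation singularities, and the paper controls it by showing the eigenvalues at such points lie in a finite set (Lemma~\ref{lem_fin_many_eigs}), resolving them in a bounded way (Lemma~\ref{lem_resolution_logsmooth_near_lc}), and running a Noetherian induction (Proposition~\ref{prop_bound_for_nice_family}) to reach the uniform $\tau$ of Corollary~\ref{cor_big_thresh_boundary}. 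None of this machinery appears in your proposal, and it is precisely what makes part (1) true; by contrast, once (1) and the $\eta$-lc control are in place, your final appeal to \cite{Bir20} (after checking the hypotheses $N-K_Y$ pseudoeffective and the decomposition $N=P+E$ required by Lemma~\ref{lem_main_bound}, which you do not verify) matches the paper's conclusion of part (2).
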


In fact, we are able to prove versions of Theorems~\ref{thm_intro_mmp},~\ref{thm_intro_can_model},~\ref{thm_bdd_intro} 
and~\ref{thm_intro_bnds} which allow for the presence of a boundary divisor.
 
Theorem \ref{thm_intro_bnds} also supplies a partial answer to~\cite[Problem 6.8]{PS16}.
To provide a complete answer to this problem would require an exact value on the universal constant 
$\tau$ in Theorem~\ref{thm_intro_bnds}.

\begin{problem}
Determine an effective upper bound for $\tau$.
\end{problem}

We also remark that as examples in \cite{Lu21} show that there does not exist a universal $M$
such that $|MK_{\cal F}|$ defines a birational map, so to get an effective birationality statement the small perturbation by $K_X$
is necessary.

\subsection{Numerical invariants of surface foliations}

We are able to provide several applications of the above results to the study of numerical invariants of foliated surfaces, automorphism groups of foliations
and to the study of curves invariant by foliations.

Given a big divisor $\bb Q$-Cartier divisor $D$ the volume $\mathrm{vol}(D)$ is defined to be 
\begin{align*}
\mathrm{vol}(D) :=
\limsup_{m \to \infty} \frac{h^0(mD)}{m^n/n!}.
\end{align*}
By \cite[Corollary 2.2.45]{Lazarsfeld04a} we may uniquely extend the volume to a function on $\mathbb R$-Cartier divisors.
The volume is a fundamental invariant in birational geometry and, in analogy with the classical MMP, cf.~\cite{HMX12},
we expect the set 
\begin{align*}
\mathcal V_n := 
&\{
\mathrm{vol}(X, K_{\cal F}) \ \vert \ 
\cal F 
\text{ is a rank one foliation of general type with canonical} \\ 
& 
\text{ singularities and $X$ is a klt projective variety of dimension $n$}
\}
\end{align*}
to be highly structured for each fixed dimension $n$.
In particular, we expect $\mathcal V_n$ to be bounded away from $0$.  
This is a challenging problem, already for $n=2$, but by perturbing $K_{\cal F}$ slightly we can verify 
a related prediction.

\begin{theorem}[= Theorem \ref{thm_vol_bound}]
Let $\tau > 0$ be the constant  whose existence is established in Theorem~\ref{thm_bdd_intro}. 
Then, for all $0<\epsilon<\tau$ there exists $0< v(\epsilon)$ such that the following statement holds:
\\
If $X$ is a smooth projective surface, 
$\cal F$ is a rank one foliation with canonical singularities and $K_{\cal F}$ is big,
then 
\begin{align*}
\mathrm{vol}(K_{\cal F}+\epsilon K_X) \geq v(\epsilon).
\end{align*}
\end{theorem}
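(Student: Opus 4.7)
The plan is to derive the volume lower bound as a direct consequence of Theorem~\ref{thm_intro_bnds} together with the classical principle that a birational linear system forces the volume to be bounded below.

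First, I would fix $\epsilon \in (0,\tau)$ and apply Theorem~\ref{thm_intro_bnds}. Since $\mathcal{F}$ has canonical singularities, $X$ is smooth projective, and $K_{\mathcal{F}}$ is big, part (1) of the theorem guarantees that $D := K_{\mathcal{F}} + \epsilon K_X$ is big, and part (2) produces a positive integer $M = M(\epsilon)$, depending only on $\epsilon$, such that the rational map $\phi_{|MD|}\colon X \dashrightarrow \mathbb{P} H^0(X,\mathcal{O}_X(\lfloor MD\rfloor))$ is birational onto its image.

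Next, I would convert this birationality into a self-intersection inequality. Pass to a log resolution $\pi\colon X' \to X$ on which the mobile part of $|\pi^*MD|$ is base point free, and write $\pi^*MD = N + F$ with $N$ the mobile part and $F \geq 0$ the fixed part. By construction, $|N|$ is base point free and the induced morphism $\phi_{|N|}\colon X' \to \mathbb{P}^{h^0(X,MD)-1}$ is birational onto its image, which is therefore a surface. Consequently $N^2 \geq \deg \phi_{|N|}(X') \geq 1$, and hence
\[
\mathrm{vol}(D) = \frac{1}{M^2}\,\mathrm{vol}(MD) = \frac{1}{M^2}\,\mathrm{vol}(\pi^*MD) \geq \frac{N^2}{M^2} \geq \frac{1}{M(\epsilon)^2}.
\]
Setting $v(\epsilon) := 1/M(\epsilon)^2$ yields the desired bound.

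The conceptual content has already been packed into Theorem~\ref{thm_intro_bnds}, so the remaining work is essentially formal; I do not expect a serious obstacle. The one point that requires a small amount of care is ensuring the bigness of $D$ is invoked correctly so that $\mathrm{vol}(D) = D^2$ computations (or the asymptotic definition) are available, and that the resolution step and the mobile-versus-fixed decomposition are handled cleanly on a possibly non-projective birational model --- but since $X$ itself is smooth projective, $\pi$ can be taken as a composition of blowups and no technical issue arises.
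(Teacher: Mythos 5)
Your argument is correct and is essentially the paper's: the paper proves this statement simply by declaring it a direct consequence of Corollary~\ref{cor_main_corollary} (i.e.\ Theorem~\ref{thm_intro_bnds}), and the standard reasoning it leaves implicit --- a birational system $|M(K_{\cal F}+\epsilon K_X)|$ forces $\mathrm{vol}(K_{\cal F}+\epsilon K_X)\geq 1/M(\epsilon)^2$ via the mobile-part computation on a resolution --- is exactly what you spell out. No gap; your extra care with the round-down of the $\mathbb{R}$-divisor is the right touch.
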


In fact, we are able to prove the above statement allowing for a boundary divisor.
As a direct consequence of the above volume bound we get another bound on the automorphism
group of a foliation of general type.

\begin{theorem}[= Theorem \ref{thm_automorphism_bound}]
\label{thm_auto_intro}
Let $\tau > 0$ be the constant whose existence is established in Theorem~\ref{thm_bdd_intro}.
Then for all $0<\epsilon<\tau$ there exists $0<C = C(\epsilon)$ such that the following statement holds:
\\
If $X$ is a smooth projective surface, 
$\cal F$ is a rank one foliation with canonical singularities and $K_{\cal F}$ is big, then
\begin{align*}
\#\text{Bir}(X, \cal F) \leq C\cdot \mathrm{vol}(K_{\cal F}+\epsilon K_X).
\end{align*}
\end{theorem}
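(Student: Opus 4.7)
The plan is to reduce the statement to a bound on the automorphism group of the $\epsilon$-adjoint canonical model of $(X, \cal F)$ and then apply a Hacon-McKernan-Xu style orbit/volume argument, with effective birationality as the key quantitative input.

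First I would use Theorem~\ref{thm_intro_can_model} to pass to the $\epsilon$-adjoint canonical model $(Y_{can}, \cal G_{can})$, and set $L := K_{\cal G_{can}} + \epsilon K_{Y_{can}}$, an ample $\mathbb{Q}$-Cartier divisor with $L^2 = \mathrm{vol}(K_{\cal F} + \epsilon K_X)$. An element of $\mathrm{Bir}(X, \cal F)$ preserves $\cal F$ (hence $K_{\cal F}$) and, tautologically, $K_X$, so it preserves $K_{\cal F} + \epsilon K_X$; by the universal property of the canonical model it descends to a birational self-map of $(Y_{can}, \cal G_{can})$ that preserves $L$, and ampleness of $L$ forces such a map to be biregular. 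This yields an identification $\mathrm{Bir}(X, \cal F) \cong \mathrm{Aut}(Y_{can}, \cal G_{can}) =: G$ and reduces the problem to bounding $|G|$ linearly in $L^2$.

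Next I would invoke Theorem~\ref{thm_intro_bnds} to fix a uniform integer $M = M(\epsilon)$ such that $|ML|$ defines a birational map $\phi\colon Y_{can} \dashrightarrow \mathbb{P}^N$. The group $G$ acts linearly on $V := H^0(Y_{can}, ML)$ and $\phi$ is $G$-equivariant, so $G$ embeds into $\mathrm{PGL}(V)$ as a subgroup stabilizing the image $Z := \overline{\phi(Y_{can})} \subset \mathbb{P}(V^*)$. To convert this into a quantitative bound I would follow the standard argument of Hacon-McKernan-Xu: pick a very general point $p \in Y_{can}$ with trivial $G$-stabilizer, so that $G \cdot p$ consists of $|G|$ distinct points, and compare the number of conditions imposed by separating this orbit to the dimension of $H^0(Y_{can}, kML)$ for a suitably chosen $k$. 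Using Riemann-Roch on $Y_{can}$ (which is klt and hence has rational singularities, so the standard vanishing and cohomological tools apply), one extracts the desired linear bound $|G| \leq C(\epsilon) \cdot L^2 = C(\epsilon) \cdot \mathrm{vol}(K_{\cal F} + \epsilon K_X)$.

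The main obstacle is verifying that the constant $C(\epsilon)$ thus produced is genuinely uniform over all $(X, \cal F)$: this requires $M(\epsilon)$ to be absolute, which is precisely the content of Theorem~\ref{thm_intro_bnds}, and it requires that the Riemann-Roch estimate on $Y_{can}$ be controlled purely in terms of $L^2$ and $\epsilon$, independently of the specific $(X, \cal F)$. Apart from these quantitative considerations, the foliation plays a largely passive role once we have descended to $(Y_{can}, \cal G_{can})$: its role is encoded entirely in the adjoint polarization $L$, while the singularity hypotheses on $\cal F$ enter only through their use in constructing the canonical model in Theorem~\ref{thm_intro_can_model}.
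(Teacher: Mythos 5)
Your first paragraph (descending to the $\epsilon$-\acanonical model and injecting $\text{Bir}(X,\cal F)$ into the automorphism group of $(Y_{can},\cal G_{can})$, which preserves the ample adjoint divisor $L$) is a reasonable reduction, but the quantitative core of your second paragraph has a genuine gap. Effective birationality of $|ML|$ only gives injectivity on a dense open set; even granting that a very general orbit of $\#G$ points maps to $\#G$ distinct points, pairwise separation of points by sections of a fixed multiple $|kML|$ does \emph{not} imply $h^0(kML)\geq \#G$ (on $\bb P^1$ the sections of $\cal O(1)$ pairwise separate arbitrarily many distinct points while $h^0=2$). What you would actually need is that the orbit imposes \emph{independent} conditions on $|kML|$ for a $k$ depending only on $\epsilon$, and the standard ways of producing independence (e.g.\ creating isolated log canonical centres at all $\#G$ orbit points and applying vanishing) require positivity growing with $\#G$, so the constant cannot be made uniform this way; the argument as sketched is circular at exactly the point where the linear bound in $\mathrm{vol}(K_{\cal F}+\epsilon K_X)$ should appear. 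In particular, the argument of Hacon--McKernan--Xu that you invoke is not an orbit-separation argument of this kind.

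The paper's proof (which is the genuine HMX-type argument) is a Hurwitz-style quotient computation: after replacing $(X,\cal F)$ by a higher model one may assume $\text{Bir}(X,\cal F)=\text{Aut}(X,\cal F)=G$; setting $Y=X/G$, $\cal G=\cal F/G$ and $\Delta=\sum_D \frac{r_D-1}{r_D}D$, foliated Riemann--Hurwitz gives $K_{(X,\cal F),\epsilon}=q^*K_{(Y,\cal G,\Delta),\epsilon}$, hence $\mathrm{vol}(K_{\cal F}+\epsilon K_X)=\#G\cdot \mathrm{vol}(K_{(Y,\cal G,\Delta),\epsilon})$, and then the boundary version of the volume lower bound, Theorem~\ref{thm_vol_bound} applied with the DCC set $\cal S$ of standard coefficients, gives $\mathrm{vol}(K_{(Y,\cal G,\Delta),\epsilon})\geq v(\epsilon)$ and thus $\#G\leq \frac{1}{v(\epsilon)}\mathrm{vol}(K_{\cal F}+\epsilon K_X)$. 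Note that effective birationality enters only indirectly (it is an ingredient in proving the volume bound), and the crucial input your proposal never uses is precisely this lower bound on adjoint volumes for triples with boundary in $\cal S$; to repair your proof, replace the orbit-separation step by this quotient argument.
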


In analogy with the classical situation it would be nice to find a bound which depends only
on $\mathrm{vol}(K_{\cal F})$.   See also \cite{CF14} for similar results in this direction.

We were also able to provide a refinement of the bound initially proven in \cite{PS16}, cf. also~\cite{HL, Chen19}.

\begin{theorem}[= Theorem \ref{thm_degree_bound}]
\label{thm_degree_bound_intro}
Let $\tau>0$ be the constant whose existence is established in Theorem~\ref{thm_bdd_intro}.  
Then for all rational numbers $0<\epsilon<\tau$ there exists $0<C = C(\epsilon)$ the following statement holds:
\\
Let $X$ be a smooth projective surface and let $\cal F$ be a a rank one foliation on $X$.
Assume that 
\begin{enumerate}
\item 
$K_{\mathcal F}$ is big,
\item 
$(X, \mathcal F)$ is $\epsilon$-\acanonical,
\item 
$\cal F$ admits a meromorphic first integral, and 
\item 
the closure of a general leaf, $L$, has geometric genus $g$.
\end{enumerate}
Then for any nef divisor $H$, 
\begin{align*}
H\cdot L \leq gC ~H\cdot (K_{\cal F}+\epsilon K_X).
\end{align*}
\end{theorem}

\subsection*{Acknowledgements}
We would like to thank Fabio Bernasconi, Paolo Cascini, Yen-An Chen, Christopher Hacon and Jorge V. Pereira for many valuable conversations, suggestions and comments.
We also thank the anonymous referee for many suggestions and improvements to the exposition of the paper.

\section{Preliminaries}
Throughout we work over an algebraically closed field $k$ of characteristic $0$.

We refer to \cite{Brunella00} for basic notions regarding foliations, and we refer to \cite{KM98} for basic
notions regarding the minimal model program.
We will assume throughout that all of our foliations are of rank one.

\subsection{ACC/DCC sets}

Given a subset $I \subset \mathbb R$ we say that $I$ satisfies the {\it ascending chain condition} (resp. {\it descending chain condition}), in short, ACC (resp. DCC), provided any increasing (resp. decreasing) sequence $x_n \in I$ is eventually constant.

Given a subset $I \subset [0, 1]$ and a $\mathbb R$-Weil divisor $\Delta$ on a normal variety, we write $\Delta \in I$ to indicate that all the coefficients of $\Delta$ are in $I$.

We will denote with $\cal S$ the following subset of $\mathbb R$, $\cal S := \{\frac{n-1}{n} \ \vert  n \in \mathbb N_{>0}\} \cup \{1\}$.

\subsection{Pairs and triples}
Let $X$ be a normal variety and let $\cal F$ be a foliation on
$X$.  Let $D$ be a $\bb R$-divisor on $X$.
We may uniquely decompose $D = D_{\text{inv}}+D_{\text{n-inv}}$ where the support of $D_{\text{inv}}$ is $\cal F$-invariant and no component of the support of $D_{\text{n-inv}}$ is $\cal F$-invariant.

By a {\bf (log) pair} $(X, \Delta)$ we mean the datum of a variety $X$ and an effective $\bb R$-divisor $\Delta$
such that $K_X+\Delta$ is $\mathbb R$-Cartier.

By a {\bf foliated (log) pair} $(\cal F, \Delta)$ on a variety $X$ we mean the datum of a foliation $\cal F$ on $X$
and an effective $\bb R$-divisor $\Delta$ such that $K_{\cal F}+\Delta$ is $\bb R$-Cartier.
When we assume that $X$ is projective, we shall say that the foliated pair is projective.

By a {\bf foliated triple} $(X, \cal F, \Delta)$ we mean the datum of a variety $X$, a foliation $\cal F$ on $X$
and an effective $\bb R$-divisor $\Delta$ such that both $K_X+\Delta$ and $K_{\cal F}+\Delta_{\text{n-inv}}$ are $\bb R$-Cartier.
If $\Delta= 0$ then we will just write $(X, \cal F)$ in place of $(X, \cal F, \Delta)$.
When we assume that $X$ is projective, we shall say that the foliated triple is projective.

\subsection{Transform of a foliation under a rational map}
\label{subs_transforms}

Let $X$ be a normal variety and let $\cal F$ be a foliation on $X$ and 
let $\phi\colon X' \dashrightarrow X$ be a dominant rational map.
Following \cite[\S 3.2]{Druel18} we may define the {\bf pulled back} foliation, denoted $\phi^{-1}\cal F$, on $X'$.
In the case where $\phi$ is birational, and $\mathcal G$ is a foliation on $X'$
we will denote by $\phi_*\mathcal G$ the pullback of $\mathcal G$
along the birational map $\phi^{-1}$ and refer to it a the {\bf transform} of $\mathcal G$ by $\phi$.

\subsection{Foliation singularities}
We are typically interested only in the case when $\Delta \geq 0$,
although it simplifies some computations to allow $\Delta$ to have negative coefficients.

Given a birational morphism $\pi\colon  \widetilde{X} \rightarrow X$ 
and a foliated pair $(\cal F, \Delta)$ on $X$,
let $\widetilde{\cal F}$ be the pulled back foliation on $\tilde{X}$.
We may write
\begin{align*}
K_{\widetilde{\cal F}}=
\pi^*(K_{\cal F}+\Delta)+ \sum a(E, \cal F, \Delta)E,
\end{align*}
where the sum runs over all the prime divisors of $\tilde X$ and 
\begin{align*}
\pi_* \sum a(E,\cal F,\Delta)E=-\Delta.
\end{align*}

The rational number $a(E,\cal F,\Delta)$ denotes the {\bf discrepancy} of $(\cal F,\Delta)$ with respect to $E$. 
\begin{defn}\label{d_canonical} Let $X$ be a normal variety and let $(\cal F,\Delta)$ be a foliated pair on $X$. 
We say that  $(\cal F, \Delta)$ is {\bf terminal} (resp. {\bf canonical},  {\bf klt}, {\bf log canonical}) if
$a(E, \cal F, \Delta) >0$ (resp. $\geq 0$, $> -\iota(E)$ and $\lfloor \Delta \rfloor =0$, $\geq -\iota(E)$),  for any birational morphism  $\pi\colon \tilde X\to X$ and for any prime divisor $E$ on  $\tilde X$, where
\begin{align*}
\iota(E) := 
\begin{cases}
1 \quad &\text{if $E$ is not $\mathcal F$-invariant}, \\
0 \quad &\text{if $E$ is $\mathcal F$-invariant}. \\
\end{cases}
\end{align*}

Moreover, we say that the foliated pair $(\cal F,\Delta)$ is {\bf log terminal} if $a(E,\cal F,\Delta)> -\iota(E)$
for any birational morphism  $\pi\colon \tilde X\to X$ and for any $\pi$-exceptional prime divisor $E$ on  $\tilde X$.
\end{defn}

We shall say that a foliated pair $(\cal F, \Delta)$ on a normal variety $X$ is {\bf strictly log canonical} at a point $x\in X$, if there exists a geometric valuation $E$ centered at $x$ such that $\iota(E)=1$ and $a(\cal F, \Delta, E)=-1$.
In particular, a strictly log canonical foliated pair is not canonical.

\begin{remark} 
Elsewhere in the literature $\iota(D)$
is denoted by $\epsilon(D)$.  However, in this paper we will frequently use
$\epsilon$ to denote a small positive real number and so to avoid confusion we have 
adopted this new notation.
\end{remark}

\begin{remark}
\begin{itemize}
	\item
The quantities $\iota(E)$ and $a(E, \cal F, \Delta)$
are independent of $\pi$.  If $\Delta = 0$ we will write $a(E, \cal F)$ for $a(E, \cal F, \Delta)$.
	\item
In the case where $\cal F = T_X$, no exceptional divisor
is invariant, i.e., $\iota(E)=1$, and so this definition recovers the usual
definitions of (log) terminal, (log) canonical, see \cite{KM98}.
In this case, we will write $a(E, X, \Delta)$ for $a(E, T_X, \Delta)$.
\end{itemize}
\end{remark}

\begin{defn}
Given a pair $(X, \Delta)$ and $\eta \geq 0$ we say
that $(X, \Delta)$ has $\eta$-lc singularities provided for all 
birational morphisms $\pi\colon X' \rightarrow X$ and $\pi$-exceptional divisors $E$ we have
$a(E, X, \Delta) \geq -(1-\eta)$.
\end{defn}

We say a foliated triple  $(X, \cal F, D)$ where $X$ is a surface and $\mathcal F$ has rank one is {\bf foliated log smooth}
provided $(X, D)$ has simple normal crossings and $\cal F$ has reduced singularities and each component of $D$ 
which is not invariant is everwhere transverse to
$\cal F$.  By \cite{Seidenberg} it is known that every surface foliated triple $(X, \cal F, D)$ admits
a resolution $\pi\colon X' \rightarrow X$ such that $(X', \pi^{-1}\cal F, \pi_*^{-1}D+E)$ is foliated log smooth
where $E = \text{exc}(\pi)$.  We call such a resolution a {\bf foliated log resolution}.

For a choice of $\epsilon > 0$, we define the $\epsilon$-adjoint log canonical divisor of a triple $(X, \cal F, \Delta)$ to be 
\begin{align*}
K_{(X, \cal F, \Delta), \epsilon}\coloneqq 
(K_{\cal F}+\Delta_{n-inv})+\epsilon (K_X+\Delta).
\end{align*}
We say a triple $(X, \cal F, \Delta)$ is adjoint general type (pseudo-effective, etc.) if for all $0<\epsilon\ll 1$ sufficiently small we have $K_{(X, \cal F, \Delta), \epsilon}$ is big (pseudo-effective, etc.).

Let $P \in X$ be a germ of a normal variety
and let $\ff m$ be the maximal ideal of $P$.  Let $\partial$ be a vector field
on $X$ which leaves $P$ invariant.  Since $\partial(\ff m) \subset \ff m$
we get an induced linear map $\partial_0\colon \ff m/\ff m^2 \rightarrow \ff m/\ff m^2$,
which we call the linear part of $\partial$ at $P$.

We recall the following characterization found in \cite[Fact I.1.8]{McQuillan08}.

\begin{proposition}
\label{prop_non_nilp_vf}
Let $\cal F$ be a germ of a rank one foliation on a normal variety $P \in X$ and suppose that $K_{\mathcal F}$ is Cartier
and that $P$ is $\mathcal F$-invariant.  
Let $\partial$ be a vector field generating $T_{\cal F}$ near $P$.
Then $\cal F$ is log canonical at $P$ if and only if $\partial_0$ is non-nilpotent.
\end{proposition}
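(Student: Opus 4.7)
The plan is to combine Seidenberg's theorem on reduction of singularities of rank one foliations on smooth surfaces with an explicit discrepancy computation at each point blowup. First I would record the key local formula: for a point blowup $\pi\colon \tilde X \to X$ at a singular point of $\cal F$, if $\pi^*\partial = f \cdot \partial'$ where $\partial'$ is a saturated local generator of the transformed foliation $\tilde{\cal F} := \pi^{-1}\cal F$, then
\[
K_{\tilde{\cal F}} = \pi^* K_{\cal F} - \mathrm{div}(f),
\]
so that $a(E, \cal F) = -\mathrm{ord}_E(f)$ for each $\pi$-exceptional divisor $E$. Thus log canonicity reduces to tracking the saturation factors $f$ and the $\cal F$-invariance of the exceptional divisors produced along the Seidenberg algorithm.

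For the direction ($\Leftarrow$), I would assume $\partial_0$ is non-nilpotent, so at least one eigenvalue of $\partial_0$ is non-zero. If both eigenvalues are non-zero with ratio not a positive rational, or if exactly one eigenvalue is zero (the saddle-node case), then $P$ is already a \emph{simple} singularity in the sense of McQuillan: a direct local computation shows that a single blowup centered at $P$ produces $f \equiv 1$ and an invariant exceptional divisor, so $a(E, \cal F) = 0 = -\iota(E)$, while the new singularities lying on $E$ are again simple. If the two eigenvalues are non-zero with ratio a positive rational (the resonant case), a finite sequence of blowups reduces to the simple case, possibly through a dicritical exceptional divisor with $a(E, \cal F) = -1 = -\iota(E)$. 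In every sub-case each exceptional divisor produced by the Seidenberg resolution satisfies $a(E, \cal F) \ge -\iota(E)$, so $\cal F$ is log canonical at $P$.

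For the direction ($\Rightarrow$), I would assume $\partial_0$ is nilpotent. Up to a linear change of coordinates, either $\partial_0 = 0$, so $\partial$ has vanishing order at least two, or $\partial_0 = y \partial_x$. The goal is to exhibit a divisor $E$ over $P$ with $a(E, \cal F) < -\iota(E)$. The prototypical computation is the case $\partial = y \partial_x + x^n \partial_y$ with $n \ge 2$: three successive blowups (at the origin, then at the unique new singular point appearing in the chart $y = xu$, then at the unique new singular point appearing in the subsequent chart $x = uv$) produce an $\cal F$-invariant exceptional divisor $E$ for which the pulled-back generator factors as a saturation times the local coordinate cutting out $E$, yielding $a(E, \cal F) = -1 < 0 = -\iota(E)$. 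The general nilpotent case is then handled by an induction on a complexity invariant (for example, the Milnor number of $\partial$ at $P$), normalizing the lowest order terms of $\partial$ at each inductive step.

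The main obstacle is the argument in this last direction: showing uniformly across all nilpotent $\partial_0$ that the Seidenberg algorithm eventually yields a saturation factor whose divisor contains an $\cal F$-invariant exceptional divisor with positive multiplicity. The technical input is that nilpotency of the linear part persists in a controlled way under blowup and eventually forces the saturation to factor through an invariant exceptional component; this parallels the standard proof of Seidenberg's reduction and is combined here with the discrepancy formula recorded above.
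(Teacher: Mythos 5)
First, a point of reference: the paper does not prove this statement at all — it is quoted as \cite[Fact I.1.8]{McQuillan08} — so there is no in-paper argument to match; what you are attempting is a from-scratch proof, and your overall strategy (Seidenberg reduction plus the discrepancy bookkeeping $a(E,\cal F)=-\mathrm{ord}_E(f)$, which is the same mechanism as Lemma~\ref{lem_refined_discrep_calc} and the ``at most three blow-ups'' argument behind Lemma~\ref{lem_e-lc_thresh}) is the standard and correct one. However, as written both directions have genuine gaps. For ($\Leftarrow$), verifying the inequality $a(E,\cal F)\geq -\iota(E)$ only for the exceptional divisors \emph{produced by the Seidenberg algorithm} does not establish log canonicity: the definition quantifies over all divisors over $P$, i.e.\ over arbitrary finite sequences of blow-ups, including blow-ups at smooth points and at points lying on earlier exceptional divisors. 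To close this you need either (i) a lemma that log canonicity can be tested on a single reduction of singularities (which in turn requires knowing that simple singularities are canonical — itself an ``all divisors'' statement of the same kind), or (ii) an induction over arbitrary blow-up sequences, showing that non-nilpotency of the linear part persists at every infinitely near singular point, that a dicritical divisor (total discrepancy $-1$) only arises when the linear part is a nonzero multiple of the identity, in which case the transformed foliation has no singularities on and is everywhere transverse to the exceptional divisor, and then controlling how the $-1$ accumulates (with positive multiplicities) into the total discrepancy of divisors lying over it. None of this is in your sketch, and it is exactly where the content of McQuillan's fact lies.

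For ($\Rightarrow$) you correctly compute the prototype $y\partial_x+x^n\partial_y$, but you acknowledge yourself that the general nilpotent case is not done, and an induction ``on the Milnor number, normalizing lowest order terms'' is not carried out. The concrete missing step is a persistence computation: if $\partial_0$ is nilpotent and nonzero, then the blow-up at $P$ has an invariant exceptional divisor of relative discrepancy $0$ and a \emph{unique} singular point on it, whose linear part is again nilpotent (possibly zero); since Seidenberg's reduction terminates and nonzero-nilpotent points are not reduced, after finitely many such blow-ups one must reach a singular point with vanishing linear part, and then Lemma~\ref{lem_refined_discrep_calc} gives a relative discrepancy $\leq -(\iota(E)+1)$, while all intermediate blow-ups (being centred at singular points) contribute non-positively to the total, so the final divisor has $a(E,\cal F)<-\iota(E)$. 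Also note that your description of the three centres is specific to $n=2$ (for $n\geq 3$ the zero-linear-part point already appears after one blow-up), and that what you must exhibit is precisely a divisor with $a(E,\cal F)<-\iota(E)$, which for an invariant $E$ means total order of vanishing at least one along $E$ of the composite saturation factor, not merely of the last one. With these two blocks supplied, your argument would be a complete proof; as it stands it is an outline of the known proof rather than a proof.
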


\subsection{Basic definitions of the MMP}

We recall some of the main definitions commonly used in the Minimal Model Program. 
Let $X$ be a normal projective variety. 
We denote by $N_1(X)$ the $\mathbb R$-vector space spanned by $1$-cycles on $X$ modulo numerical equivalence (e.g. see \cite[Definition 1.16]{KM98}). 
We denote by $NE(X)\subset N_1(X)$ the subset of effective $1$-cycles $[\sum_{i=1}^k a_i C_i]$ where $a_1,\dots,a_k$ are positive real numbers and $C_1,\dots,C_k$ are curves in $X$, and we denote by $\overline {NE(X)}$ its closure (e.g. see \cite[Definition 1.17]{KM98}).
A {\bf ray} is a $1$-dimensional subcone $R$ of $\overline {NE(X)}$ and it is called {\bf extremal} if for any $u,v\in \overline {NE(X)}$ such that $u+v\in R$, we have that $u,v\in R$. 
If $D$ is a $\mathbb Q$-Cartier $\mathbb Q$-divisor on $X$ then the extremal ray $R$ is said to be $D$-{\bf negative} if $D\cdot C<0$ for any curve $C$ such that $[C]\in R$. 
A projective birational morphism $f\colon X \rightarrow Y$ between normal projective varieties is said to be an
{\bf extremal contraction} if the relative Picard number $\rho(X/Y)$ is equal to one. The extremal contraction is called a {\bf divisorial contraction} if its exceptional locus is a divisor. Given an extremal ray $R \subset \overline{NE}(X)$, an extremal contraction $f\colon X\to Y$ is said to be {\bf associated} to $R$ if the locus of $R$ coincides with the exceptional locus of $f$.

\subsection{Recollection on the foliated MMP}

We summarize some basic results on the existence of the MMP for surface foliations, 
as well as extending some well known results to the case of pairs $(\mathcal F, \Delta)$ with log canonical singularities.

\begin{lemma}
\label{lem_uniq_lc_place}
Let $P \in X$ be a germ of a surface singularity and let $\cal F$ be a rank one foliation on $X$ which is strictly log canonical at $P$. 
Let $\mu\colon Y \rightarrow X$ be any foliated log resolution which is an isomorphism over $X \setminus P$.  
Then there is exactly one $\mu$-exceptional divisor which is transverse to $\mu^{-1}\cal F$.  
\end{lemma}
\begin{proof}
Let $\nu\colon X' \rightarrow X$ be a foliated log resolution of $\cal F$.
Observe that $\nu$ will extract every $\mu$-exceptional divisor which is transverse to $\mu^{-1}\cal F$.
We may write $\nu^*K_{\cal F} +F = K_{\cal G}+\sum_{i = 1}^k E_i$ where $\cal G = \nu^{-1}\cal F$,
where the $E_i$ are the non $\cal G$ invariant exceptional divisors and where $F \geq 0$.  By \cite[Corollary 2.26]{Spicer17} 
we may run a $K_{\mathcal G}$-MMP
over $X$, call it $\phi\colon X' \rightarrow X''$, set $\mathcal H = \phi_*\mathcal G$ and let $\rho\colon X'' \rightarrow X$ be the
induced map.  
Only curves tangent to $\mathcal G$ will be contracted by this MMP, 
and so no component of $\sum_{i= 1}^k E_i$ will be 
contracted.
 
By foliation adjunction we see that 
$(K_{\cal H}+E'_i)\cdot E'_i \geq 0$ where $E'_i = \phi_*E_i$, see ~\cite[Proposition 3.4]{Spicer17} 
(note that in the notation of \cite[Proposition 3.4]{Spicer17} the resticted foliation $\cal H_{E'_i}$ 
is the foliation by points on $E'_i$ and so $K_{\cal H_{E'_i}} = 0$).  In particular, $K_{\mathcal H}+\sum E'_i$
is nef over $X$.
By the negativity lemma, ~\cite[Lemma 3.39]{KM98}, we have $\phi_*F = 0$, and so $K_{\mathcal H}+\sum E'_i$ is numerically trivial over $X$.
Since $K_{\mathcal H}$ is nef over $X$ it likewise follows that $-\sum E'_i = K_{\mathcal H} - (K_{\mathcal H}+\sum E'_i)$
is nef over $X$.  Another application of ~\cite[Lemma 3.39]{KM98} then shows that ${\rm exc}(\rho) = \sum E'_i$,
and so $\sum E'_i$ is connected.  
This together with the inequalities 
$0 \leq (K_{\mathcal H}+E'_i)\cdot E'_i \leq (K_{\mathcal H}+\sum_{i=1}^k E'_i)\cdot E'_i = 0$ implies that $k=1$, as required.
\end{proof}

\begin{lemma}
\label{lem_lc_mmp}
Let $X$ be a normal projective surface with a rank one foliation $\mathcal F$ and $\Delta \geq 0$ such that
$(\cal F, \Delta)$ is log canonical.  Suppose that $K_{\mathcal F}$ is $\bb Q$-Cartier.
Let $R \subset \ccurves{X}$ be a $(K_{\cal F}+\Delta)$-negative extremal ray and let $C \subset X$ be a 
$\mathcal F$-invariant curve
such that
\begin{enumerate}
\item $[C] \in R$; and

\item $C$ contains a strictly log canonical singularity of $\mathcal F$.

\end{enumerate}

Then $X$ is covered by curves spanning $R$ and $\rho(X) = 1$.
In particular, $K_{\cal F}+\Delta$ is not pseudo-effective.
\end{lemma}
\begin{proof}
Since $(\cal F, \Delta)$ is log canonical by \cite[Remark 2.12]{Spicer17} we know 
that no component of $\Delta$ is $\mathcal F$-invariant.
In particular, $\Delta\cdot C \geq 0$.  So it follows that $K_{\cal F}\cdot C <0$.

Let $P \in C$ be a strictly log canonical singularity of $\mathcal F$.  
If $n\colon \overline{C} \rightarrow C$ is the normalization then \cite[Proposition 2.16]{CS20} implies that we may write
$n^*K_{\cal F} = K_{\overline{C}}+\Theta$ where $\Theta \geq 0$ and
$\lfloor \Theta\rfloor$ is supported
exactly on the preimage of the non-terminal points of $\cal F$ contained in $C$.  
In particular,
since $K_C+\Theta<0$ it follows that for all other $Q \in C$, $Q \neq P$ that $\cal F$
is terminal at $Q$.

To see that $C$ moves, we may freely replace $X$ by a smaller open neighborhood of $C$ so that 
$\cal F$ is strictly log canonical at only $P$. 
Let $f\colon X' \rightarrow X$ be an F-dlt modification, which exists by~\cite[Theorem 1.4]{CS18}.
Thus, $K_{\cal F'} +E = f^*K_\cal F$, where $\cal F'= f^{-1}\cal F$ and $E$ is the unique irreducible $f$-exceptional divisor which is not $\cal F'$-invariant, see Lemma~\ref{lem_uniq_lc_place}, which implies that $K_{\cal F'}\cdot C'<0$, where $C'$ is the strict transform of $C$.
Let us observe that $(\cal F', E)$ is F-dlt, in particular log canonical, in a neighborhood of $E$.  
But, since $\cal F'$ is non-dicritical, then for any divisor $F$ centred over a point in a neighborhood of $E$, $a(F, \cal F', E) \geq \iota(F) = 0$, which in turn implies that $\cal F'$ is terminal in a neighborhood of $E$.  
Hence, $\mathcal F'$ is terminal at $C' \cap E$. 
As $(K_{\cal F'} +E) \cdot C' < 0$, then, by adjunction~\cite[Lemma 8.9]{Spicer17} $\cal F'$ is terminal at each point of $C'$.  
By Reeb stability~\cite[Proposition 3.3]{CS20}, $C'$ moves in family covering $X'$, and hence $C$ moves in a family covering $X$.

Finally, we claim that $C^2>0$.  
This follows because $(C')^2 = 0$, $C'\cdot E >0$ and $f^*C = C'+aE+F$ where 
$a>0$ and $F \geq 0$ is $f$-exceptional.  Thus $C$ is a big divisor and so $C$, and hence $R$, is contained in
the interior of $\overline{NE}(X)$. As $R$ is also an extremal ray of $\overline{NE}(X)$, then $\rho=1$.
\end{proof}

\begin{theorem}
\label{thm_recall_surf_mmp}
Let $X$ be a projective klt surface, let $\Delta \geq 0$ and let $\mathcal F$ a rank one foliation such that
$(\mathcal F, \Delta)$ and $(X, \Delta)$ are log canonical.

Let $R \subset \overline{NE}(X)$ be a 
$(K_{\mathcal F}+\Delta)$-negative extremal ray. Then there exists a $\mathcal F$-invariant curve $C$ such that $R= \mathbb{R}_{>0}[C] \subset \overline{NE}(X)$.

Moreover, there exists a contraction
$c_R\colon X \rightarrow Y$
contracting exactly those curves in $X$ whose numerical classes are contained in $R$ and such that the following conditions holds:
\begin{enumerate}
\item if $c_R\colon X \rightarrow Y$ is birational then $c_R$ contracts only $\cal F$-invariant curves;

\item if $c_R\colon X \rightarrow Y$ is a fibre type contraction then $R$ is $(K_X+\Delta)$-negative;

\item if there is a strictly log canonical singularity of $\mathcal F$ on $C$ then $\rho(X) = 1$, $-(K_{\mathcal F}+\Delta)$ and $-(K_X+\Delta)$ are ample, and $Y$ is a point.
\end{enumerate}
Moreover, in all cases the relative Picard number of the contraction $ = 1$.

In particular, we may run a 
$(K_{\mathcal F}+\Delta)$-MMP.
\end{theorem}
\begin{proof}
By the cone theorem for surface foliations, see \cite[Theorem 6.3 and Remark 6.4]{Spicer17}, a $(K_{\mathcal F}+\Delta)$-negative extremal ray $R \subset \overline{NE}(X)$ is spanned by the class of a curve $C$ which is $\mathcal F$-invariant.

Let us consider the case where $C^2 \geq 0$. 
By \cite[Theorem~6.3]{Spicer17}, there exists a nef Cartier divisor $H_R$ such that $H_R \cdot R =0$
and $H_R$ is positive on every other extremal ray.  Since $C^2 \geq 0$ it follows that $H_R$ cannot be big, i.e., $H_R^2 = 0$.
Arguing as in the proof of~\cite[Theorem 6.3]{Spicer17}, using~\cite[Corollary 2.28]{Spicer17}, it follows that $X$ is covered by a family of rational curves tangent to $\mathcal F$ whose numerical class spans $R$.

\medskip

{\bf Claim 1}.
{\it Let $\Sigma$ be a general choice of such a curve, then $(K_X+\Delta)\cdot \Sigma<0$}.

\begin{proof}[Proof of Claim 1]
Let $p\colon Y \rightarrow X$ be an F-dlt modification, which exists by~\cite[Theorem 1.4]{CS18}.
If $\mathcal G = p^{-1}\mathcal F$ and $\Delta' = p_*^{-1}\Delta$,
then $K_{\mathcal G}+\Delta' +\sum_i \iota(E_i)E_i = p^*(K_{\mathcal F}+\Delta)$,
where $E_i$ are the $p$-exceptional divisors.  
By construction, $\mathcal G$ is non-dicritical;
hence, it is induced by a fibration $Y \rightarrow B$, such that $\Sigma' = p_*^{-1}\Sigma$
is a fibre of $Y \rightarrow C$.  
It follows that $K_Y\cdot \Sigma' = K_{\mathcal G}\cdot \Sigma' = -2$.  
We may write $K_{Y}+\Delta' +\sum_i a_iE_i = p^*(K_X+\Delta)$, where $a_i \leq 1$ since $(X, \Delta)$ is log canonical. 
If $\iota(E_i) = 0$, then $E_i \cap \Sigma' = \emptyset$, since $\Sigma'$ is general, and so $E_i\cdot \Sigma' = 0$. 
Hence, $(\sum_i \iota(E_i)E_i)\cdot \Sigma' \geq (\sum_i a_iE_i)\cdot \Sigma'$, and
\begin{align*}
0> (K_{\mathcal F}+\Delta)\cdot \Sigma = 
& (K_{\mathcal G} +\Delta'+\sum_i \iota(E_i)E_i)\cdot \Sigma' \\
\geq &(K_{Y}+\Delta' +\sum_i a_iE_i)\cdot \Sigma' = 
(K_X+\Delta)\cdot \Sigma.
\end{align*}
\end{proof}
In view of Claim 1, then the existence of $c_R$ is immediate, since $c_R$ can be constructed as the contraction of a $(K_X+(1-\epsilon)\Delta)$-negative extremal ray $0<\epsilon \ll 1$, see~\cite[Theorem 3.7]{KM98}. 
Since $c_R$ is a fibre type contraction only if $C^2 \geq 0$, then item (2) above also follows at once.

\medskip

Item (3) follows from Lemma \ref{lem_lc_mmp} and (2).

\medskip

Now consider the case where $C^2<0$. 
In this case, the contraction $c_R$, if it exists, will be birational and it will only contract $C$, which proves (1).

From item (3), we know that
$\mathcal F$ has canonical singularities in a neighborhood of $C$.  We may apply~\cite[III.1-2]{McQuillan08} to contract $C$ -- strictly speaking, in~\cite{McQuillan08} an entire chain of rational curves is contracted, but the arguments provided work equally well to contract a single $K_{\mathcal F}$-negative curve.  
For the reader's convenience we will supply an alternate proof of the existence of this contraction.  
By~\cite[Theorem 11.3]{CS18}, $\mathcal F$ has non-dicritical singularities in a neighborhood of $C$ and so by~\cite[Lemma 8.14]{Spicer17}\footnote{Let us observe that both~\cite[Theorem 11.3]{CS18} and~\cite[Lemma 8.14]{Spicer17} are stated for threefolds. We can deduce the analogous statement for rank one foliation on surfaces by applying the results to the threefold $X \times B$ where $B$ is a smooth curve, and to the foliation $\pi^{-1}\cal F$ where $\pi\colon X \times B \rightarrow X$ is the projection.} this implies that $(X, \Delta+C)$ is a log canonical pair. 

\medskip

{\bf Claim 2}. $(K_X+\Delta+C)\cdot C<0$.  
\begin{proof}[Proof of Claim 2]
Let $p\colon Y \rightarrow X$ be an 
F-dlt modification, which can be performed by \cite[Theorem 1.4]{CS18}.
Let $C'$ be the strict transform of $C$ and let $\Delta' = p_*^{-1}\Delta$.
We write $(K_Y+\Delta'+C'+\sum_iE_i)= p^*(K_X+\Delta+C)+\sum_i b_iE_i$, where the $E_i$ are the $p$-exceptional prime divisors and $b_i \geq 0$.  
Since $\mathcal F$ is non-dicritical, then for all $i$, $E_i$ is invariant; thus, $K_{\mathcal G}+\Delta' = p^*(K_{\mathcal F}+\Delta)$.
Since $\mathcal G$ is F-dlt,~\cite[Lemma 3.12]{CS18} implies that $\mathcal G$ is terminal at the singular points of $X$.  We may then apply \cite[Lemma 8.9]{Spicer17}
to conclude that 
\begin{align*}
(K_{\mathcal G}+\Delta')\cdot C' \geq (K_Y+\Delta'+C'+\sum_iE_i)\cdot C' \geq (K_X+\Delta+C)\cdot C.
\end{align*} 
Again, here the cited
results are stated for threefolds but apply equally well to surfaces as explained above.
\end{proof}
We conclude by observing that $(X, (1-\epsilon)(\Delta+C))$ is klt for all $0<\epsilon$ and $(K_X+(1-\epsilon)(\Delta+C))\cdot C<0$
for $0<\epsilon \ll 1$ and so we may contract $C$ by a $(K_X+(1-\epsilon)(\Delta+C))$-negative extremal contraction, 
\cite[Theorem 3.7]{KM98}.

Since all our contractions are 
$(K_X+\Theta)$-negative 
contractions for a klt pair 
$(X, \Theta)$,~\cite[Corollary 3.17]{KM98} 
implies that they are of relative Picard number one.

\medskip

Finally, it is a standard argument to show that the existence of divisorial contractions as explained above implies the existence
of the $(K_{\mathcal F}+\Delta)$-MMP.
\end{proof}

\begin{remark}
\label{remark_dlt_preserved}
Let notation be as in Theorem \ref{thm_recall_surf_mmp}.
The above proof shows that if 
$\sum_i C_i$ is any collection of reduced $\mathcal F$-invariant curves such that $\mathcal F$ has canonical singularities in a neighborhood of $\sum_iC_i$, then each step of the $K_\mathcal{F}$-MMP
is also a step of the $(K_X+\Delta+\sum C_i)$-MMP.  
In particular, if $(X, \Delta+\sum C_i)$ is dlt and $\phi\colon X \rightarrow X'$ is a run of the $(K_{\mathcal F}+\Delta)$-MMP, then $(X', \phi_*(\Delta+\sum C_i))$ is again dlt.
\end{remark}

\begin{lemma}
\label{lem_resolution}
Let $X$ be a normal surface, let $D$ be a reduced Weil divisor, and let $\cal F$ be a rank one foliation on $X$ such that
\begin{enumerate}
\item $K_{\cal F}$ is Cartier; and

\item every component of $D$ is $\cal F$-invariant.
\end{enumerate}
Then there exists a log resolution of $(X, D)$ which only extracts
divisors $E$ of foliation discrepancy $\leq -\iota(E)$.

\end{lemma}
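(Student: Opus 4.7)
The plan is to show that the minimal log resolution of $(X, \Delta)$ works. Since the minimal log resolution is dominated by any log resolution, it suffices to exhibit some log resolution extracting only divisors $E$ with $a(E, \cal F) \leq -\iota(E)$. I would construct such a resolution by iterated blowups centered at foliation singular points.

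The key local observation is that at any smooth point $p$ of $\cal F$ on a smooth surface, local coordinates $(x,y)$ can be chosen with $\cal F = \langle \partial/\partial x \rangle$, so that the $\cal F$-invariant germs of curves through $p$ are precisely the leaves $\{y = c\}$. Consequently, any $\cal F$-invariant curve is smooth at $p$ and two distinct $\cal F$-invariant curves cannot meet at $p$. Since all components of $\Delta$ are $\cal F$-invariant by hypothesis, every non-SNC point of $(X, \Delta)$ must be a singular point of $\cal F$. Moreover, the classical discrepancy formula on surfaces shows that for the blowup $\sigma\colon Y \to Z$ at a point $p$ of multiplicity $m \geq 1$ for $\cal F$ (measured by the order of vanishing at $p$ of a local generator of $N^*_{\cal F}$), the exceptional divisor $E$ satisfies $a(E, \cal F) = 1 - m$ if $E$ is $\cal F$-invariant and $a(E, \cal F) = -m$ otherwise (see, e.g.,~\cite{Brunella00}); either case yields $a(E, \cal F) \leq -\iota(E)$.

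I would then argue inductively: at stage $i$, blow up a non-SNC point of the current pair on $X_{i-1}$ whose boundary consists of the strict transform of $\Delta$ together with all previously extracted exceptional divisors. Assuming inductively that the first $i-1$ centers lie in the singular locus of the respective transformed foliation, the local observation applied to $\cal F_{i-1}$, together with the facts that distinct exceptional divisors meet transversally by iterated blowup geometry and that $\cal F_{i-1}$-invariant components meet non-invariant exceptional divisors transversally at $\cal F_{i-1}$-smooth points (distinct tangent directions), imply that any non-SNC point on $X_{i-1}$ must also be a singular point of $\cal F_{i-1}$. Additivity of foliation discrepancies combined with the formula above then yields $a(E_i, \cal F) \leq -\iota(E_i)$ for the next extracted divisor. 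Termination of this process is provided by Seidenberg's reduction of singularities of surface foliations.

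The main obstacle lies in the inductive step, specifically in ruling out the possibility that a non-invariant exceptional divisor $E_j$ has an isolated tangency with $\cal F_{i-1}$ at an $\cal F_{i-1}$-smooth point, which could create a non-SNC configuration with a nearby invariant strict transform not centered at any foliation singularity. Handling this subtlety requires a local computation in the charts of iterated blowups, using that tangencies of extracted exceptional divisors with the transformed foliation are controlled by the formal type of $\cal F$ at the original blowup centers and, in particular, do not give rise to additional blowup centers in the minimal log resolution beyond those already in the singular locus of the respective transformed foliation.
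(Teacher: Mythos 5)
There is a genuine gap here, on two counts. First, your argument never uses the hypothesis that $K_{\mathcal F}$ is Cartier and implicitly assumes that $X$ is smooth: the local normal form at foliation-smooth points, the multiplicity formula $a(E,\mathcal F)=1-m$ (resp. $-m$), and the whole blowup calculus you invoke are for blowups at points of a smooth ambient surface. But on a smooth surface the Cartier hypothesis is vacuous; the lemma is needed (and is used in the paper, together with Lemma~\ref{lem_strictly_lc_goren}, inside Lemma~\ref{lem_resolution_logsmooth_near_lc}) precisely when $X$ is a singular normal surface, where a log resolution of $(X,\Delta)$ must in particular extract divisors lying over $\mathrm{sing}(X)$, and your analysis says nothing about the foliation discrepancies of those divisors. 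Controlling them is the actual content of the lemma, and it is where Cartier-ness enters: locally $\mathcal F$ is generated by a vector field $\partial$, one takes a log resolution of $(X,\Delta)$ whose centres are $\mathcal F$-invariant (via \cite{BM97}, since $\partial$ preserves the data), lifts $\partial$ to $\tilde\partial$ with $\pi_*\tilde\partial=\partial$ by \cite[Lemma 1.1.3]{bm16}, and observes that $\tilde\partial$ preserves the fibres of $\pi$, hence vanishes along every non-invariant exceptional divisor; since the foliation discrepancy along $E$ equals minus the order of vanishing of $\tilde\partial$ along $E$, the bound $\le -\iota(E)$ holds for all extracted divisors simultaneously.

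Second, the obstacle you flag at the end is not a deferrable technicality but exactly the point where your induction could fail: if at some stage a non-invariant (dicritical) exceptional divisor were tangent, at a foliation-smooth point, to an invariant component of the current boundary, then achieving simple normal crossings would force a blowup at a foliation-smooth point, and the divisor so extracted is invariant with foliation discrepancy $+1>-\iota(E)$, destroying the claimed inequality. So your construction proves the lemma only if you show such configurations never arise, and no argument is given. The paper's route makes the issue disappear: a foliation-smooth point is never an invariant centre, so a resolution by $\mathcal F$-invariant centres never performs such a blowup, and the discrepancy bound is verified uniformly by the vector-field lifting rather than centre-by-centre. (Your preliminary reduction, that it suffices to exhibit one log resolution with the stated property because the minimal log resolution is dominated by any log resolution, is fine and is also how the last sentence of the lemma is deduced in the paper.)
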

\begin{proof}
The problem is local about any point $P \in X$ so we may freely assume $(X, D)$ is not log smooth at $P$ 
and that $\mathcal F$ is generated by a vector field $\partial$.  Since $(X, D)$ is not log smooth at $P$
either $X$ or $D$ is singular at $P$, and so by \cite[Lemma 2.6]{CS20} $P$ is invariant under $\partial$.   
By \cite[Lemma 1.1.3]{bm16} if $b\colon \tilde{X} \rightarrow X$ is the blow up in $P$, then $\partial$ 
lifts to a vector field $\tilde{\partial}$ on $\tilde{X}$, which moreover leaves $b^{-1}(E)$ invariant.

A log resolution of $(X, D)$ may be acheived by repeatedly blowing up centers where $(X, D)$ is not log smooth, so
by applying the above observation and arguing by induction on the number of blow ups in a log resolution 
we may produce a log resolution $\pi\colon X' \rightarrow X$ and a lift $\partial'$ of $\partial$ which leaves $\pi^{-1}(P)$
invariant.

Because $\partial'$ leaves $\pi^{-1}(P)$ invariant we see that if $F$ is a $\pi$-exceptional
divisor with $\iota(F) = 1$ then $\partial'$ vanishes along $F$.
Our first claim then follows by observing that the foliation discrepancy along a divisor $F$
is exactly $-a$ where $a$ is the order of vanishing of $\partial'$ along $F$.
\end{proof}

\begin{lemma}
\label{lem_strictly_lc_goren}
Let $P \in X$ be a germ of a normal surface and let $\cal F$ be a rank one foliation on $X$.
Suppose that $\cal F$ is strictly log canonical at $P$.  Then $\cal F$ is Gorenstein at $P$.
\end{lemma}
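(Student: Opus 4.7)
The plan is to show that $K_{\cal F}$ is Cartier at $P$, equivalently that the rank one reflexive sheaf $T_{\cal F}$ is locally free at $P$. The case where $X$ is smooth at $P$ is immediate, since a saturated rank one subsheaf of the locally free sheaf $T_X$ on a smooth surface is automatically reflexive of rank one, hence a line bundle. The content is thus concentrated in the case where $X$ is singular at $P$.

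I would take a foliated log resolution $\pi\colon \widetilde{X} \to X$ of $(X,\cal F)$, so that $\widetilde{\cal F} := \pi^{-1}\cal F$ has only simple (hence canonical) foliation singularities and the $\pi$-exceptional divisor $\sum_i E_i$ is simple normal crossings. Since $K_{\cal F}$ is $\bb Q$-Cartier by hypothesis, one has
\[
K_{\widetilde{\cal F}} = \pi^*K_{\cal F} + \sum_i a(E_i,\cal F)\,E_i,
\]
with $a(E_i,\cal F) \ge -\iota(E_i)$ for every $i$, and by strict log canonicity equality $a(E,\cal F)=-1$ and $\iota(E)=1$ for at least one $\pi$-exceptional prime $E$. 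Using the order-of-vanishing description of foliation discrepancies recalled in the proof of Lemma~\ref{lem_resolution}, these two conditions force a saturated local generator of $T_{\widetilde{\cal F}}$ near $E$ to be a nowhere-vanishing vector field transverse to $E$; in particular $\widetilde{\cal F}$ is smooth along all of $E$ and $T_{\widetilde{\cal F}}$ is locally free of rank one in a neighborhood of $E$.

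I would then conclude by checking that $\pi^*K_{\cal F} = K_{\widetilde{\cal F}} - \sum_i a(E_i,\cal F)\,E_i$ is an integer Cartier divisor on the smooth surface $\widetilde{X}$: since $K_{\widetilde{\cal F}}$ is already integral, this reduces to verifying that every discrepancy $a(E_i,\cal F)$ is an integer. Once this is done, $\pi^*K_{\cal F}$ is a $\bb Z$-Cartier divisor on $\widetilde{X}$ that is numerically $\pi$-trivial on each exceptional curve, and hence descends to a Cartier divisor on the normal surface $X$, giving the desired Gorenstein statement.

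The main obstacle is this integrality verification: the strict log canonicity hypothesis directly supplies only the single integer discrepancy $a(E,\cal F)=-1$, and the integrality of the remaining $a(E_i,\cal F)$ must be forced by combining (a) the linear system $\sum_i a(E_i,\cal F)\,(E_i\cdot E_j) = K_{\widetilde{\cal F}}\cdot E_j$, whose right-hand side is already integral, (b) the combinatorics of the dual graph of $\sum_i E_i$ over $P$, and (c) the local classification of simple foliation singularities at the intersection points $E\cap E_i$ for $i$ with $E_i\neq E$. The smoothness and transversality of $\widetilde{\cal F}$ along $E$ obtained in the previous step is what rigidifies the configuration enough to propagate integrality of $a(E,\cal F)$ to the remaining $a(E_i,\cal F)$.
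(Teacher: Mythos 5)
There is a genuine gap --- in fact two, and they sit exactly where the work has to happen. First, the integrality of all the discrepancies $a(E_i,\mathcal F)$ is never established: you flag it as ``the main obstacle'', but the ingredients you list do not force it. The linear system $\sum_i a(E_i,\mathcal F)\,(E_i\cdot E_j)=\bigl(K_{\widetilde{\mathcal F}}-\pi^*K_{\mathcal F}\bigr)\cdot E_j$ has integral right-hand side, but the exceptional intersection matrix is in general not unimodular, so integral data do not yield integral solutions; integrality of the $a(E_i,\mathcal F)$ is essentially equivalent to the Cartier statement you are trying to prove, not a stepping stone towards it. Second, and more seriously, the final descent step is false as a general principle: an integral divisor on $\widetilde X$ that is numerically $\pi$-trivial need not come from a Cartier divisor on $X$ unless $P\in X$ is a \emph{rational} singularity (where Artin--Lipman theory applies). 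The lemma makes no such assumption, and the cases of interest include cusp and simple elliptic germs arising from elliptic Gorenstein leaves (cf.\ the remark after Corollary~\ref{cor_can_model}); there $\operatorname{Pic}$ of a neighbourhood of the exceptional cycle contains numerically trivial torsion classes that obstruct descent. Concretely, on the cone over an elliptic curve $C$ polarized by $L$ of degree $d$, the cone $D$ over an effective divisor in $|L\otimes t|$, with $t\in\operatorname{Pic}^0(C)$ torsion of order $m>1$, is a $\mathbb{Q}$-Cartier Weil divisor of index exactly $m$ whose pullback $\widetilde D+E$ is integral and numerically trivial on the exceptional curve; so ``integral plus numerically $\pi$-trivial'' cannot distinguish index $1$ from index $m$.

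The paper's proof avoids the resolution entirely: it passes to the index one cover $\sigma\colon X'\to X$ associated to $K_{\mathcal F}$, notes that strict log canonicity survives the quasi-\'etale cover (Lemma~\ref{lem_cover2}), invokes McQuillan's normal form for strictly log canonical singularities to write a generator $\partial$ of the covering foliation whose linear part is semisimple with positive integer eigenvalues, and then uses Galois equivariance: any $g$ in the covering group satisfies $g_*\partial=\zeta\partial$ with $\zeta$ a primitive $m$-th root of unity while preserving the eigenvalue multiset, forcing $\zeta=1$ and hence $m=1$. If you want to repair a resolution-based argument, you would need to show that $\mathcal O_{\widetilde X}(\pi^*K_{\mathcal F})$ is trivial, not merely numerically trivial, near the exceptional cycle --- which is in effect the torsion-killing that the index one cover argument does for you.
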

\begin{proof}
Let $\sigma\colon P' \in X' \rightarrow P \in X$ be the index one cover associated to $K_{\cal F}$
with Galois group $G \cong \bb Z/m \bb Z$, let $\cal F' = \sigma^{-1}\cal F$ and let $\partial$ be a vector field defining $\cal F'$.
By Lemma \ref{lem_cover2} we have $\cal F'$ is strictly log canonical at $P'$.

Let $\ff m$ denote the maximal ideal of $X'$ at $P'$. Since $\cal F'$ is strictly log canonical by \cite[Fact III.i.3]{McQP09} 
(up to renormalizing $\partial$ by a constant in $\bb C$) 
we may write the linear part of $\partial$ as $\partial_0 = \sum n_i x_i\frac{\partial}{\partial x_i}$
where $n_1, ..., n_k$ are positive integers and $x_1, ..., x_k \in \ff m$ give a basis of $\ff m/\ff m^2$.

Let $g \in G$.  On one hand, by assumption 
we may write $g_*\partial = \zeta \partial$ where $\zeta$ is a primitive $m$-th root of unity.
On the other hand,  from the equality $g^*(g_*\partial(x)) = \partial(g^*x)$ for any $x \in \ff m$
we see that linear parts of $g_*\partial$ and $\partial$ have the same
eigenvalues.
It follows that $\{n_1, ..., n_k\} = \{\zeta n_1, ..., \zeta n_k\}$
and so $\zeta = 1$, i.e., $K_{\cal F}$ was Gorenstein
to begin with.
\end{proof}

\subsection{$(\epsilon, \delta)$-adjoint log canonical foliated singularities}
\label{s_ed_alc_defns}
We wish to measure singularities of triples $(X, \cal F, \Delta)$ in terms of  how $K_{(X, \cal F, \Delta), \epsilon}$ changes under birational transformations.
This idea was initially considered in \cite{PS16}, see Remark~\ref{rmk_different_notation} below, and the approach here is a natural extension of the ideas introduced there.

\begin{defn}
\label{def:alc.sings}
Let  $(X, \cal F, \Delta)$ be a foliated triple. 
Fix $\epsilon>0$ and  $\delta\geq 0$.
\\
We say that $(X, \cal F, \Delta)$ is {\bf $(\epsilon, \delta)$-adjoint log canonical} 
(resp. {\bf $(\epsilon, \delta)$-adjoint klt})
provided that for any birational morphsim 
$\pi\colon X' \rightarrow X$ if we write
\begin{align*}
(K_{\cal F'}+\Delta'_{\text{n-inv}})+
\epsilon (K_{X'}+\Delta') = 
\pi^*((K_{\cal F}+\Delta_{\text{n-inv}})+
\epsilon (K_X+\Delta))+E,
\end{align*} 
where
\begin{itemize}
	\item
 $E = \sum a_iE_i$ is $\pi$-exceptional, and
 	\item
 $\Delta'\coloneqq \pi^{-1}_\ast \Delta$,
\end{itemize}
then for all $i$
\begin{align*}
a_i \geq & (\iota(E_i)+\epsilon)(-1+\delta) \\
(\text{resp.}, \ \lfloor \Delta \rfloor = 0 \ \text{and} 
\ a_i > &  (\iota(E_i)+\epsilon)(-1+\delta)).
\end{align*}
\end{defn}

When $\delta =1$ we will refer to $(\epsilon, \delta)$-\alc as $\epsilon$-adjoint canonical.
When $\delta = 0$ we will refer to $(\epsilon, \delta)$-\alc as $\epsilon$-adjoint log canonical.

\begin{remark}
\label{rmk_different_notation}

In \cite{PS16} the notion of {\it $\epsilon$-canonical} was defined whereby singularities were measured by considering how the adjoint series $K_{\cal F}+\epsilon N^*_{\cal F}$ transforms under blow ups.

By re-writing 
\begin{align*}
K_{\cal F}+\epsilon N^*_{\cal F} = (1-\epsilon)K_{\cal F}+\epsilon K_X,
\end{align*}  
then it is immediate to see that if a foliated pair $(X, \mathcal F)$ is $\frac{\epsilon}{1+\epsilon}$-canonical in the sense of \cite{PS16}, then it is also $(\epsilon, 1)$-\alc in the sense of Definition~\ref{def:alc.sings}.
For various computations we need to perform, working with $K_{\cal F}+\epsilon K_X$ seemed preferable to us
hence the slight change in the definition.
\end{remark}

The following three lemmata are immediate consequences of the definition of $(\epsilon, \delta)$-\alc singularities.

\begin{lemma}
\label{lem_perturb_epsilon}
Let $(X, \cal F, \Delta)$ be a foliated triple and $\epsilon >0$.
\begin{enumerate}
\item \label{lem_perturb_epsilon:part1}
Let $0 \leq \delta'< \delta \in [0,1]$.
If $(X, \cal F, \Delta)$ is $(\epsilon, \delta)$-\alc then it also $(\epsilon, \delta')$-\alc.
\item 
Assume that $\Delta = 0$ and that $(X, \cal F)$ is $\epsilon$-\acanonical.
\begin{enumerate}
\item If $\cal F$ has canonical singularities, then $(X, \cal F)$ is $\epsilon'$-\acanonical for all $0 < \epsilon' \leq \epsilon$.

\item If $X$ has canonical singularities, then $(X, \cal F)$ is $\epsilon''$-\acanonical for all $\epsilon'' \geq \epsilon$.
\end{enumerate}
\end{enumerate}
\end{lemma}
\begin{proof}
Follows from a direct computation.
\end{proof}

\begin{lemma}
\label{lem_easy_ed_crit}
Let $X$ be a smooth surface 
and let $0 \leq \Delta$ be $\bb Q$-divisor with snc support such that $\lfloor \Delta \rfloor = 0$.
Let $\cal F$ be a rank one foliation such that $(\cal F, \Delta_{\text{n-inv}})$ is canonical.
Let $\delta>0$ be such that all coefficients
of  $\Delta$ are  $\leq 1-\delta$.

Then
$(X, \cal F, \Delta)$ is $(\epsilon, \delta)$-\alc for all $\epsilon>0$.
\end{lemma}
\begin{proof}
Follows from a direct computation.
\end{proof}

\begin{lemma}
\label{lem_ed_sing_pres}
Let $p\colon X \rightarrow Y$ be a birational morphism between surfaces, let $(\cal F, \Delta)$ be a foliated pair on $X$ where $\cal F$ is of rank one 
and let $\cal G \coloneqq p_\ast \cal F$.
Fix $\epsilon>0$ and  $\delta \geq 0$.
Assume that
\begin{enumerate}
\item the coefficients of $\Delta$ are $\leq 1-\delta$;

\item $-K_{(X, \cal F, \Delta), \epsilon}$ is $p$-nef; and

\item $(X, \cal F, \Delta)$ is $(\epsilon, \delta)$-\alc.
\end{enumerate}

Then $(Y, \cal G, p_*\Delta)$ is $(\epsilon, \delta)$-\alc.
\end{lemma}
\begin{proof}
This is a direct consequence of the negativity lemma, \cite[Lemma 3.38]{KM98}.
\end{proof}

Our goal for the last part of this subsection is to prove the following generalization of~\cite[Proposition 4.9]{PS16}.
\begin{proposition}
\label{cor_bound2}
Let $I \subset [0, 1]$ be a subset satisfying the DCC.
Then there exists a positive real number $E=E(I)$ such that the following statement holds:

Let $0 \in X$ be a germ of a klt surface singularity,
let $\cal F$ be a rank one foliation on $X$ and let $\Delta \geq 0$
be a divisor with $\Delta \in I$ so that
$(X, \cal F, \Delta)$ is $\epsilon$-\alc for some $0<\epsilon<E(I)$.
Then $(\cal F, \Delta_{\text{n-inv}})$ is log canonical.
\end{proposition}

We start by proving three ancillary lemmata that will be used in the proof of the proposition.

\begin{lemma}
\label{lem_refined_discrep_calc}
Let $\partial$ be a germ of a vector field on $P \in \bb C^2$ and suppose that $\partial$
is singular at $P$ and the linear part of $\partial$ at $P$ is $=0$.
Let $\pi\colon X \rightarrow \bb C^2$ be the blow up at $P$ with exceptional divisor $E$
and let $\cal F$ be the foliation generated by $\partial$.
Then
\begin{align*} K_{\pi^{-1}\cal F} = \pi^*K_{\cal F}-bE,\end{align*}
where $b \geq \iota(E)+1$.
\end{lemma}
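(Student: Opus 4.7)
The plan is to carry out a direct coordinate computation using the standard formulas for blowing up a vector field. First I would pick local analytic coordinates $(x,y)$ centred at $P$ and write $\partial = A(x,y)\partial_x + B(x,y)\partial_y$; the vanishing of the linear part of $\partial$ at $P$ is equivalent to $A,B \in \mathfrak m_P^2$. Working in the blowup chart $x=u,\ y=uv$ (the other chart is symmetric), the identities $\pi^*\partial_x = \partial_u - (v/u)\partial_v$ and $\pi^*\partial_y = u^{-1}\partial_v$, together with the factorizations $A(u,uv)=u^2\tilde A(u,v)$ and $B(u,uv)=u^2\tilde B(u,v)$, yield
\[
\pi^*\partial \;=\; u^2 \tilde A\, \partial_u \;+\; u(\tilde B - v\tilde A)\, \partial_v.
\]

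Next I would recall that the saturated generator of $T_{\pi^{-1}\cal F}$ near the generic point of $E = \{u=0\}$ is $\partial' := \pi^*\partial / u^b$, where $b$ is the order of vanishing of $\pi^*\partial$ along $E$; the relation $K_{\pi^{-1}\cal F} = \pi^*K_{\cal F} - bE$ then follows from comparing $\pi^*T_{\cal F}$ and $T_{\pi^{-1}\cal F}$ as line bundles through this saturation (the cokernel of the inclusion is supported on $E$ with multiplicity $b$). Setting $\alpha := v_E(\tilde A)$ and $\beta := v_E(\tilde B - v\tilde A)$ gives $b = \min(2+\alpha,\ 1+\beta)$. The key structural fact, which really drives the bound, is that the $\partial_u$-coefficient of $\pi^*\partial$ carries one more factor of $u$ than the $\partial_v$-coefficient, a direct reflection of the asymmetry between $\pi^*\partial_x$ and $\pi^*\partial_y$ in the blowup formulas above.

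Given this, I would split into two cases according to which expression achieves the minimum. If $b = 1+\beta < 2+\alpha$, then after dividing by $u^b$ the $\partial_u$-coefficient of $\partial'$ still vanishes on $E$, so $\partial'|_E$ is tangent to $E$; hence $E$ is $\pi^{-1}\cal F$-invariant, $\iota(E) = 0$, and $b \geq 1 = \iota(E)+1$. If instead $b = 2+\alpha \leq 1+\beta$, then the $\partial_u$-coefficient of $\partial'$ restricts to a nonzero multiple of $\tilde A$ along $E$, so $\partial'|_E$ is transverse to $E$ generically; hence $E$ is non-invariant, $\iota(E) = 1$, and $b \geq 2 = \iota(E)+1$. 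The main, mild, obstacle is bookkeeping the case analysis and confirming the line-bundle identification that gives $K_{\pi^{-1}\cal F} = \pi^*K_{\cal F} - bE$; once the explicit form of $\pi^*\partial$ is written down, the inequality $b \geq \iota(E)+1$ is forced automatically by the extra power of $u$ in the $\partial_u$-direction.
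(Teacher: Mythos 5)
Your proposal is correct and takes essentially the same approach as the paper: a direct local computation of how a single blow-up changes the foliated canonical class, with the bound coming from the order-$\geq 2$ vanishing of the generator and the dichotomy of whether $E$ is invariant. The only difference is presentational: the paper works dually with the defining $1$-form $\omega$ (with isolated zero, i.e.\ a saturated generator, an assumption your chart computation also uses implicitly) and quotes Brunella's formulas $l(P)=a(P)+\iota(E)$ and discrepancy $-(l(P)-1)$ with $a(P)\geq 2$, whereas you rederive the same facts by pulling back the vector field in a blow-up chart.
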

\begin{proof}
Using notation as in \cite[Chapter 1, \S 2]{Brunella00}
let $\omega$ be a one form with an isolated zero at $P$ and $\partial(\omega) = 0$.
Let $a(P)$ denote the order of vanishing of $\omega$ at $P$
and let $l(P)$ denote the order of vanishing of $\pi^*\omega$ along $E$.

A direct computation shows 
that $l(P) = a(P)$ when $E$ is invariant
and $l(P) = a(P)+1$ when $E$ is not invariant.  Another straightforward calculation shows
that the discrepancy of our blow up is $-(l(P)-1) = -(a(P)+\iota(E)-1)$ and by assumption $a(P) \geq 2$
from which our claim follows.
\end{proof}

\begin{lemma}
\label{lem_e-lc_thresh}
For $0< \epsilon< \frac{1}{5}$ 
the following holds.

Let $\cal F$ be a rank one foliation on $P \in \bb C^2$ and suppose that $(\bb C^2, \cal F)$
is $\epsilon$-\alc at $P$.  Then $\cal F$ is log canonical at $P$.
\end{lemma}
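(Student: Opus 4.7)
The plan is to argue the contrapositive: assuming $\cal F$ is not log canonical at $P$, I exhibit an exceptional geometric valuation $E$ over $P$ with
\[
a(E,\cal F)+\epsilon\, a(E,X)<-(\iota(E)+\epsilon)
\]
whenever $\epsilon<\tfrac15$. Let $\partial$ be a local generator of $\cal F$; the Proposition recalled just above makes the hypothesis equivalent to the linear part $\partial_0$ being nilpotent, so I split according to whether $\partial_0$ vanishes. If $\partial_0=0$, Lemma~\ref{lem_refined_discrep_calc} applied to the blow up $\pi_1\colon X_1\to X$ at $P$ gives $a(E_1,\cal F)\le -(\iota(E_1)+1)$ and $a(E_1,X)=1$, so the inequality becomes $-1+\epsilon<-\epsilon$, i.e.\ $\epsilon<\tfrac12$.

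If $\partial_0$ is nonzero nilpotent, choose coordinates so that $\partial=y\partial_x+A(x,y)\partial_x+B(x,y)\partial_y$ with $A,B\in\ff m_P^2$. A direct chart computation in $x=u$, $y=uv$ shows that $E_1$ is $\cal F_1$-invariant, that $a(E_1,\cal F)=0$ and $a(E_1,X)=1$, and that the linear part of $\pi_1^{\ast}\partial$ at the candidate singular point $(u,v)=(0,0)$ is $b_{20}\,u\,\partial_v$, where $b_{20}$ denotes the coefficient of $x^2$ in $B$. The other chart of $\pi_1$ has no singular point on $E_1$, so this is the unique singular point on $E_1$. Since $\cal F$ is not log canonical while $a(E_1,\cal F)=-\iota(E_1)$, non-log-canonicity must persist at $P_1=(0,0)$, where by the Proposition the linear part is nilpotent. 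If $b_{20}=0$, Lemma~\ref{lem_refined_discrep_calc} applied to $\cal F_1$ at $P_1$ produces $E_2$ with $a(E_2,\cal F)\le -(\iota(E_2)+1)$ (the bound descends to $X$ because $a(E_1,\cal F)=0$) and $a(E_2,X)=2$, yielding $\epsilon<\tfrac13$.

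The decisive case is $b_{20}\ne 0$. The structural claim, on which the threshold $\tfrac15$ hinges, is that in coordinates at $P_1$ adapted to the linear part (swap $u\leftrightarrow v$ and rescale by $b_{20}$), the new ``$B$''-coefficient equals $(uv+A(u,uv))/b_{20}$ and hence has no pure $v^2$ monomial; thus the analogue of $b_{20}$ at $P_1$ vanishes. Applying the previous sub-case analysis to $\cal F_1$ at $P_1$, one further blow up at $P_1$ produces a non-log-canonical singular point $P_2$ lying at the corner $\tilde E_1\cap E_2$ of $X_2$ at which the linear part is zero. A final blow up at $P_2$ extracts $E_3$ with $a(E_3,\cal F)\le -(\iota(E_3)+1)$ by Lemma~\ref{lem_refined_discrep_calc}, and the corner formula for variety discrepancies gives
\[
a(E_3,X)=1+a(\tilde E_1,X)+a(E_2,X)=1+1+2=4.
\]
The adjoint inequality then fails precisely when $-1+4\epsilon<-\epsilon$, i.e.\ $\epsilon<\tfrac15$.

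The main obstacle is the vanishing of $b_{20}$ after one blow up in the last sub-case: without it, the nonzero-nilpotent pattern could in principle propagate indefinitely, the variety discrepancies $a(E_n,X)$ would grow past $4$, and the admissible $\epsilon$ would be forced below $\tfrac15$. The verification is a short direct expansion of $\pi_1^{\ast}\partial$ using only $A,B\in\ff m_P^2$.
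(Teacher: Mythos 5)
Your proof is correct and follows essentially the same route as the paper: argue the contrapositive, reduce by at most three blow-ups to a singular point with vanishing linear part, apply Lemma~\ref{lem_refined_discrep_calc}, and note that the variety discrepancy is at most $4$, which yields the threshold $\epsilon<\tfrac{1}{5}$. The only difference is that where the paper cites \cite[Proposition 4.9]{PS16} (and Brunella) for the existence of this bounded blow-up sequence, you carry out the nilpotent normal-form and chart computations explicitly — in particular the key verification that the analogue of $b_{20}$ vanishes after one blow-up — and these computations check out.
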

\begin{proof}
Suppose that $\cal F$ is not log canonical at $P$.

Following the proof of \cite[Proposition 4.9]{PS16} (see also the proof of \cite[Theorem 1.1]{Brunella00})
we may find a sequence of at most 3 blow-ups
$b_i\colon (X_i, \cal F_i) \rightarrow (X_{i-1}, \cal F_{i-1})$
such that
\begin{enumerate}
\item $(X_0, \cal F_0)\coloneqq (\bb C^2, \cal F)$;
\item $b_i$ is a blow up in the singular locus of $\cal F_{i-1}$; and 
\item on the last blow up, call it $b_{n}$, we blow up a foliation singularity whose linear part is $=0$.
\end{enumerate}

Let $\pi\colon X_n \rightarrow \bb C^2$ denote the composition of these blow ups and let $E$ be the exceptional divisor of $b_n$.
By Lemma \ref{lem_refined_discrep_calc} 
$a(E, \cal F) \leq -(\iota(E)+1)$, 
and a direct computation shows that 
$a(E, X) \leq 4$.  
Thus, 
\begin{align*}
a(E, \cal F)+\epsilon a(E, X) \leq -(\iota(E)+1)+4\epsilon.
\end{align*}
For all $\epsilon <\frac{1}{5}$, 
$-(\iota(E)+1)+4\epsilon < -(\iota(E)+\epsilon)$, which in turn implies that 
$a(E, \cal F)+\epsilon a(E, X)< -(\iota(E)+\epsilon)$, contradicting the assumption that 
$(X, \cal F)$ is $\epsilon$-\alc.
\end{proof}

We remark that if $\epsilon>0$ then an $\epsilon$-\acanonical singularity
is not necessarily a canonical foliation singularity, i.e., it could be strictly
log canonical.
Moreover if we fix $\epsilon>0$
then a terminal singularity of $\cal F$ is not necessarily $(\epsilon, 1)$-\alc.
Consider for instance the quotient of $(\bb C^2, \langle \frac{\partial}{\partial x}\rangle)$
by the action of $\bb Z/m\bb Z$ given by $(x, y) \mapsto (\xi x, \xi^b y)$
where $(m, b) = 1$.
This will always gives a terminal foliation singularity, however,
it is not $\epsilon$-\acanonical for $\epsilon> \frac{1}{m-2}$. 

\begin{lemma}
\label{lem_cover2}
Let $X$ be a normal variety and let $\cal F$ be a rank one foliation on $X$. Let $\sigma\colon X' \rightarrow X$
be a finite morphism, \'etale in codimension 1 and set $\cal F' \coloneqq \sigma^{-1}\cal F$.
Write $\Delta' = \sigma^*\Delta$ and note that $\Delta'_{\text{n-inv}} = \sigma^*\Delta_{\text{n-inv}}$.
Fix $\epsilon>0$ and $\delta \geq 0$.

\begin{enumerate}
\item Suppose that $(X, \cal F, \Delta)$ is $(\epsilon, \delta)$-\alc. Then $(X', \cal F', \Delta')$ 
is $(\epsilon, \delta)$-\alc.

\item Suppose that $(\cal F', \Delta'_{\text{n-inv}})$ is (log) canonical.  Then $(\cal F, \Delta_{\text{n-inv}})$ 
is (log) canonical.
\end{enumerate}

\end{lemma}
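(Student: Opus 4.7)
The plan is to exploit the fact that $\sigma$ has no ramification divisor and reduce both parts to a clean discrepancy identity. Since $\sigma$ is \'etale in codimension $1$, Riemann--Hurwitz forces $K_{X'}=\sigma^*K_X$, and the analogous computation for the foliated canonical class (performed by looking at a saturated defining $1$-form) gives $K_{\cal F'}=\sigma^*K_{\cal F}$: the foliated correction $(e_D-1)\iota(D)D$ that would ordinarily appear along a ramified divisor $D$ vanishes because $\sigma$ has no such divisors at all. Combined with $\Delta'=\sigma^*\Delta$ and $\Delta'_{\text{n-inv}}=\sigma^*\Delta_{\text{n-inv}}$, this yields $\sigma^*K_{(X,\cal F,\Delta),\epsilon}=K_{(X',\cal F',\Delta'),\epsilon}$.

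For part (1) I would fix any birational morphism $\pi'\colon\tilde X'\to X'$ extracting a prime $E'$ and, after possibly passing to a higher model on the $X'$-side, complete it into a commutative square with a birational $\pi\colon\tilde Y\to X$ and a finite morphism $\tilde\sigma\colon\tilde X'\to\tilde Y$. On these models $\tilde\sigma$ may well be ramified along $E'$ with some index $e\ge 1$ (new branching can appear only over the codimension-$\ge 2$ branch locus of $\sigma$). Applying the foliated Riemann--Hurwitz formula to $\tilde\sigma$, and using $\iota(E')=\iota(F)$ with $F=\tilde\sigma(E')$ (because $\cal F'$ is a pullback foliation), gives the shifted-discrepancy identity
\[
\bigl(a(E',\cal F',\Delta'_{\text{n-inv}})+\epsilon\,a(E',X',\Delta')\bigr)+(\iota(E')+\epsilon)
= e\bigl(a(F,\cal F,\Delta_{\text{n-inv}})+\epsilon\,a(F,X,\Delta)+(\iota(F)+\epsilon)\bigr).
\]
The $(\epsilon,\delta)$-\alc hypothesis on $(X,\cal F,\Delta)$ at $F$ bounds the right-hand side below by $\delta(\iota(F)+\epsilon)$, and since $e\ge 1$ the left-hand side inherits at least the same lower bound; this is precisely the $(\epsilon,\delta)$-\alc condition at $E'$.

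Part (2) uses the same identity with $\epsilon=0$ run in reverse. Given any birational $\pi\colon\tilde Y\to X$ extracting $F$, I would lift the corresponding valuation to a prime $E'$ above $X'$; log canonicity of $(\cal F',\Delta'_{\text{n-inv}})$ at $E'$ reads $a(E',\cal F',\Delta'_{\text{n-inv}})+\iota(E')\ge 0$, which by the identity above gives $e(a(F,\cal F,\Delta_{\text{n-inv}})+\iota(F))\ge 0$, and since $e>0$ this yields log canonicity at $F$. For the canonical statement the same argument applies, with any residual loss from $e>1$ absorbed by noting that the only $F$ over which such $e$ can occur are centered over the codimension-$\ge 2$ branch locus of $\sigma$; in the applications in this paper, where $\sigma$ is taken to be an index-one cover of $K_{\cal F}$ (cf.\ Lemma~\ref{lem_strictly_lc_goren}), that branch locus is $\cal F$-invariant, so any such $F$ has $\iota(F)=0$ and the inequality degenerates to $a(F,\cal F,\Delta_{\text{n-inv}})\ge 0$.

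The main technical obstacle I anticipate is the explicit verification of the foliated Riemann--Hurwitz correction $(e_D-1)\iota(D)D$: this is read off from the saturated pullback of a local defining $1$-form, by checking that the extra factor $u_D^{e_D-1}$ appearing in the unsaturated pullback must be divided out along $D$ exactly when $D$ is not $\cal F'$-invariant (equivalently, when $\tilde\sigma(D)$ is not $\cal F$-invariant). Once this identity is secured, the rest of the proof is a bookkeeping exercise with the displayed shifted-discrepancy formula and the elementary inequality $e\ge 1$.
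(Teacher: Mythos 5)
Your overall route is the same as the paper's: the paper proves this lemma by mimicking \cite[Proposition 5.20]{KM98}, base-changing a birational model of $X$ along $\sigma$ (normalized fibre product) and applying the foliated Riemann--Hurwitz formula of \cite[Lemma 3.4]{Druel18} in its adjoint form $K_{(X',\cal F',\Delta'),\epsilon}=\sigma^*K_{(X,\cal F,\Delta),\epsilon}$ plus the correction $\sum(r_D-1)(\iota(D)+\epsilon)D$ on the ramified models. Your shifted-discrepancy identity, in which the adjoint log discrepancy gets multiplied by the ramification index $e$ and $\iota(E')=\iota(F)$, is exactly the computation underlying the paper's proof, and it correctly gives part (1) (since $e\geq 1$) and the log canonical half of part (2).

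There is, however, a genuine gap in your treatment of the \emph{canonical} half of (2). From your identity alone, $a(E',\cal F',\Delta'_{\text{n-inv}})\geq 0$ with $\iota(F)=1$ and $e\geq 2$ only yields $a(F,\cal F,\Delta_{\text{n-inv}})\geq -\tfrac{e-1}{e}$, i.e.\ log canonicity, and your proposed fix does not repair this: first, you silently replace the stated hypothesis (an arbitrary finite $\sigma$ \'etale in codimension one) by the special index-one covers of Lemma~\ref{lem_strictly_lc_goren}; second, and more importantly, the inference ``the branch locus is $\cal F$-invariant, hence any $F$ centered over it has $\iota(F)=0$'' is a non sequitur --- invariance of the \emph{center} does not imply invariance of the \emph{divisor} (the exceptional divisor of the blow-up of a radial singularity is non-invariant even though its center is a point). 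The correct way to close the case is different: ramification of $\tilde\sigma$ along an exceptional $E'$ can only occur when $E'$ is contracted to a point of $X'$ lying over the branch locus (over a divisorial center $\sigma$ is \'etale, so $e=1$ and the identity gives $a(F)=a(E')\geq 0$ directly), and a canonical foliation singularity is non-dicritical, so every exceptional divisor over such a point of $(\cal F',\Delta'_{\text{n-inv}})$ is $\cal F'$-invariant; then $\iota(E')=\iota(F)=0$ and the identity gives $a(F)=a(E')/e\geq 0$. (Alternatively, note that the paper only ever uses the log canonical case of (2), e.g.\ in Proposition~\ref{cor_bound2}.)
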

\begin{proof}
The proof of item (1) and the log canonical case of item (2) is essentially identical to the proof of \cite[Proposition 5.20]{KM98}, making
use of the statement of foliated Riemann-Hurwitz found in \cite[Lemma 3.4]{Druel18}
which gives us the following adjoint Riemann-Hurwitz formula
for a finite morphism $\sigma\colon Y \rightarrow X$ for all $\epsilon \geq 0$:
\begin{align*}
K_{(Y, \cal G, \sigma^*\Delta), \epsilon} = \sigma^*K_{(X, \cal F, \Delta), \epsilon}+\sum_{D \in \text{Div}(Y)} (r_D-1)(\iota(D)+\epsilon)D,
\end{align*}
where $\cal G = \sigma^{-1}\cal F$ and $r_D$ is the ramification index of $\sigma$ along $D$.

We now explain the proof of item (2) in the canonical case.
Let $f\colon W \rightarrow X$ be a birational morphism,
and consider the following diagram
\begin{align*}
\xymatrix{
W' \ar[d]_{g} \ar[r]^{\tau} & W \ar[d]^{f} \\
X' \ar[r]^{\sigma} & X
}
\end{align*}

where $W'$ is the normalization of the main component of $W\times_XX'$.  Set 
$\mathcal H \coloneqq f^{-1}\mathcal F$ and $\mathcal H' \coloneqq g^{-1}\mathcal F'$.  
Let $E \subset W$ be an exceptional divisor, let $E'$ be a component of $\tau^{-1}(E)$ and let $r$ be the ramification index of $\tau$
along $E'$.
By assumption $\mathcal F'$ has canonical singularities and so by \cite[Corollary III.i.4]{McQP09} $\mathcal F'$ is non-diciritcal, hence every $g$-exceptional divisor is 
$\mathcal H'$-invariant.  Thus, every $f$-exceptional divisor is $\mathcal H$-invariant.
Doing a calculation analogous to the one in \cite[Proposition 5.20]{KM98}
and making use of the foliated Riemann-Hurwitz formula, \cite[Lemma 3.4]{Druel18},
and noting that $\iota(E') = 0$ gives us
\begin{align*}a(E', \mathcal F', \Delta') = ra(E, \mathcal F, \Delta).\end{align*}
Since $a(E', \mathcal F', \Delta') \geq 0$, it follows that $a(E, \mathcal F, \Delta) \geq 0$ as required.
\end{proof}

\begin{proof}[Proof of~\ref{cor_bound2}]
Since $0 \in X$ is a klt singularity it is a quotient singularity and so
by Lemma \ref{lem_cover2} we may freely reduce to the case where $X = \bb C^2$.
We may also assume, without loss of generality, that $\Delta = \Delta_{\text{n-inv}}$ and by
taking $\epsilon <\frac{1}{5}$ and applying 
Lemma \ref{lem_e-lc_thresh}
we may assume that $\cal F$ is log canonical at $0$.

We now proceed by arguing in cases, based on whether or not $\cal F$ is singular at $0$.

\medskip

{\bf Case 1}. \emph{We assume that $\cal F$ is singular at $0$.}

Since $\cal F$ is log canonical at $0$ it suffices to show 
that for $\epsilon$ sufficiently small if $(X, \cal F, \Delta)$ 
is $\epsilon$-\alc then $\Delta$ is disjoint from $0$.

So, suppose that $0$ is in the support of $\Delta$, let $\pi\colon X \rightarrow \bb C^2$ be the blow up
at $0$ with exceptional divisor $C$, let $i_0$ be the smallest strictly positive element of $I$,
let $\cal G = \pi^{-1}\cal F$ and let $\Delta' = \pi_*^{-1}\Delta$.

Since $\cal F$ is log canonical and singular at $0$
we have that the blow up at $0$
has foliation discrepancy $=-\iota(C)$ and so
 $K_{\cal G}+\Delta' = \pi^*(K_{\cal F}+\Delta) +aC$ where $a \leq -\iota(C)-i_0$
and $K_X+\Delta' = \pi^*(K_{\bb C^2}+\Delta) +bC$ where $b \leq 1-i_0$.
It follows that
$K_{(X, \cal G,\Delta'), \epsilon} = \pi^*(K_{(\bb C^2, \cal F,\Delta), \epsilon})+(a+\epsilon b)C$
where $a+\epsilon b \leq -\iota(C)-i_0+\epsilon (1-i_0)$.

Now $(\bb C^2, \cal F, \Delta)$ will fail
to be $\epsilon$-\alc if 
we have the inequality $-\iota(C)-i_0 +\epsilon(1-i_0) < -(\iota(C)+\epsilon)$
or equivalently
$\frac{\iota(C)+i_0-\epsilon(1-i_0)}{\iota(C)+\epsilon} >1$.
However, this inequality will hold for all $\epsilon$ smaller 
than some constant depending
only on $i_0$.

\medskip

{\bf Case 2}. 
\emph{We assume that $\cal F$ is smooth at $0$.}

Let $0 \in L$ be a germ of a leaf through
$0$ and observe that the discrepancies of $(\cal F, \Delta)$ are exactly the log discrepancies
of $(X, \Delta+L)$, see \cite[Lemma 8.14]{Spicer17} and its proof (again we remark that the cited Lemma is proven for threefolds, but holds
for surfaces as well).  
Thus, to show that $(\cal F, \Delta)$ is log canonical 
it suffices to show that $(X, \Delta+L)$ is log canonical.  
\\
We now claim that $(X, \cal F, \Delta)$ being $\epsilon$-\alc implies
that $(X, \Delta+\frac{1}{1+\epsilon}L)$ is log canonical.
Indeed, let $\pi\colon Y \rightarrow X$ be any birational morphism
and let $\Delta_Y$ and $L_Y$ denote the strict transforms of $\Delta$ and $L$ respectively
and let $C = \sum C_i$ be the sum of the $\pi$-exceptional divisors with coefficient $=1$.  Note 
that all the $C_i$ are invariant.
Thus, by definition of $\epsilon$-\alc and the above observation we may write
\begin{align*}(K_{Y}+\Delta_Y+L_Y+C)+ \epsilon(K_Y+\Delta_Y) = \pi^*((K_X+\Delta+L)+\epsilon (K_X+\Delta))+\sum a_iC_i\end{align*}
where $a_i \geq -\epsilon$.
Dividing the above equality by $1+\epsilon$
gives
\begin{align*}K_Y+\Delta_Y+\frac{1}{1+\epsilon}(L_Y+C) = \pi^*(K_X+\Delta+\frac{1}{1+\epsilon}L)+\sum \frac{a_i}{1+\epsilon}C_i.\end{align*}
Note that we always have the inequality $\frac{1-a_i}{1+\epsilon} \leq 1$ for all $i$ and so
it follows that $(X, \Delta+\frac{1}{1+\epsilon} L)$ is log canonical.
\\  
Let $\lambda$ be the log canonical threshold of $(X, \Delta)$ with respect to $L$.
We have just shown $\lambda \geq \frac{1}{1+\epsilon}$.

By the ACC for log canonical thresholds, \cite[Theorem 1.1]{HMX12},
we see that there is a fixed $\lambda_0$ depending only on $I$ so that if 
$\lambda \geq \lambda_0$ then $\lambda = 1$, in which case $(X, \Delta+L)$ is log canonical, which implies
that $(\cal F, \Delta)$ is log canonical.
\medskip

Choose $E(I)$ 
so that $\epsilon<E(I)$ implies 
\begin{enumerate}
\item $\frac{1}{1+\epsilon} \geq \lambda_0$;
\item $\epsilon <\frac{1}{5}$; and
\item $\frac{\iota(C)+i_0-\epsilon(1-i_0)}{\iota(C)+\epsilon} >1$.
\end{enumerate}
We therefore see that if
$\epsilon <E(I)$ then $(\cal F, \Delta)$ is log canonical.
\end{proof}

\subsection{A general boundedness result}

\begin{lemma}
\label{lem_main_bound}
Fix positive real numbers $\eta, \theta$.
Let $X$ be a projective $\eta$-lc variety
of dimension $n$ 
and let $N$ be a $\mathbb{R}$-divisor on $X$ such that
\begin{enumerate}
\item 
$N$ is nef and big;

\item
$N-K_X$ is pseudo-effective; and

\item
$N= P+E$ with $P$ integral and pseudo-effective, 
and $E \geq 0$ is effective and all its non-zero coefficients are $\geq \theta$.

\end{enumerate}
Then there exists an $m= m(\dim(X), \eta, \theta)$
such that for any $m'\geq m$, $|\lfloor m'N \rfloor|$ defines a birational map.
\end{lemma}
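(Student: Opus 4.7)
The plan is to derive this lemma as a direct application of Birkar's effective birationality theorem for nef and big divisors on $\eta$-lc pairs proved in \cite{Bir20}. The three hypotheses (1)--(3) are formulated precisely to match the input of that theorem: (1) supplies the positivity of $N$ (nef and big), (2) provides the adjoint condition that $N - K_X$ lies in the pseudoeffective cone, and (3) exhibits the decomposition of $N$ into an integral pseudoeffective piece $P$ and an effective divisor $E$ whose nonzero coefficients lie in a set bounded below by $\theta$ -- precisely the DCC-type polarisation data that Birkar's theorem requires as input.

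First I would invoke \cite{Bir20} directly on the data $(X, N = P+E)$ to produce an integer $m_0 = m_0(n, \eta, \theta)$ such that $|m_0 N|$ defines a birational map. This is the substantive content of the lemma; from our side the work is simply to translate hypotheses and verify that we fall within the scope of Birkar's statement. To promote this single effective value $m_0$ to the full range $m' \geq m$ claimed, I would use the standard bootstrap: given any nonzero section $\tau \in H^0(X, (m'-m_0)N)$, multiplication by $\tau$ induces an inclusion $H^0(X, m_0 N) \hookrightarrow H^0(X, m' N)$ that preserves birationality of the associated rational map. The existence of such a $\tau$ for every $m' \geq m$, with $m$ chosen large enough in terms of $n, \eta, \theta$ only, would be obtained from an effective non-vanishing statement applied uniformly in $m'$; this is available by combining Kawamata--Viehweg vanishing for $N$ (using (1) and (2)) with the effective input supplied by $E$ in (3).

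The main obstacle is not a genuine mathematical difficulty but a careful matching of hypotheses: one must verify that $(X, N, P, E)$ falls within the exact formal scope of Birkar's effective birationality statement, in particular that the role assigned to the integral pseudoeffective summand $P$ matches the polarisation data in \cite{Bir20} and that the coefficient bound $\geq \theta$ in (3) is sufficient to trigger the DCC hypothesis there. Modulo this translation and the bootstrap described above, the proof reduces to an appeal to \cite{Bir20}.
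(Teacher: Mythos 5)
Your proposal is essentially the paper's own proof: the lemma is a verbatim restatement of \cite[Theorem 4.2]{Bir20}, and the paper's proof consists of exactly that citation. Note that the bootstrap in your second paragraph is unnecessary, since Birkar's statement already yields the conclusion for every $m' \geq m$ --- which is fortunate, because the effective non-vanishing of $H^0(X, (m'-m_0)N)$ you invoke does not follow from Kawamata--Viehweg vanishing when $N-K_X$ is merely pseudoeffective rather than nef and big.
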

\begin{proof}
This is~\cite[Theorem~4.2]{Bir20}
\end{proof}

\subsection{Boundedness and foliations}

We make note of a simple result on the boundedness of foliations in families.

We recall that a bounded family of proper normal surfaces is a proper and flat morphism $f \colon \mathcal X \to T$ of finite type varieties such that any fibre of $f$ is a normal surface.
When $T$ is smooth (but not necessarily connected), then each connected component of $\mathcal X$ is normal.

Let $X$ be a normal variety.  
By a Weil divisorial sheaf $\mathcal K$ we mean a sheaf of the form $\mathcal O_X(K)$ where $K$
is a Weil divisor on $X$.  
By the support of $\mathcal K$ we mean the support of $K$ as a Weil divisor.

\begin{lemma}
\label{lem:bounded.fols}
Let $f \colon \mathcal X \to T$ be a bounded family of normal surfaces and let $\mathcal K$ be a Weil divisorial sheaf on $\mathcal X$ whose support does not contain any fiber of $f$.

Assume that the following hold:
\begin{enumerate}
\item $T$ is smooth;

\item $f\colon \mathcal X \to T$ is flat; and

\item for all $t \in T$ the restriction $\mathcal K\vert_{X_t}$ is reflexive. 
\end{enumerate}

Then there exists a bounded family of normal surfaces $f' \colon \mathcal X' \to T'$ and a rank one foliation $\mathcal F$ on $\mathcal X'$ which is tangent to $f'$ satisfying the following condition.

For all $t \in T$ and for any foliation $\mathcal G_t$ on $\mathcal X_t$ of canonical divisor $\mathcal K\vert_{\mathcal X_t}$, there exists $t' \in T'$ such that 
\begin{align*}
\mathcal X_t \simeq \mathcal X'_{t'}, 
\quad
\mathcal F\vert_{\mathcal X_{t'}} \simeq \mathcal G_t.
\end{align*}
\end{lemma}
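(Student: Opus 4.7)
The plan is to parametrise, over the base $T$, all the possible rank $1$ foliations on fibres of $f$ having canonical divisor of the prescribed class. After a Noetherian stratification of $T$ into finitely many locally closed subsets, it suffices to treat each stratum separately, so I may assume that the numerical invariants that will come up in the argument are all constant along $T$, that $T$ is reduced, and that cohomology and base change behaves uniformly.

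The key observation is that giving a rank $1$ foliation $\mathcal G_t$ on $\mathcal X_t$ with $K_{\mathcal G_t} = \mathcal K\vert_{\mathcal X_t}$ is the same as giving a non-zero map (up to scalar)
\[
\mathcal O_{\mathcal X_t}(-\mathcal K\vert_{\mathcal X_t}) \hookrightarrow T_{\mathcal X_t}
\]
whose image is saturated, equivalently, a non-zero element of
\[
H^0\bigl(\mathcal X_t,\ \mathcal Hom(\Omega^1_{\mathcal X_t},\ \mathcal O(\mathcal K\vert_{\mathcal X_t}))\bigr),
\]
taken modulo scalars and subject to the open condition that the resulting cokernel be torsion free. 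Since the assumption on $\mathcal K$ guarantees that $\mathcal O_{\mathcal X}(\mathcal K)$ is flat over $T$ in codimension $\leq 1$, after further stratifying $T$ I may assume that
\[
\mathcal V := f_\ast \mathcal Hom_{\mathcal O_{\mathcal X}}\bigl(\Omega^1_{\mathcal X/T},\ \mathcal O_{\mathcal X}(\mathcal K)\bigr)
\]
is locally free and that its formation commutes with base change, so that its fibre at $t \in T$ recovers the vector space above.

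I now form the projective bundle $T_0' := \mathbb P_T(\mathcal V)$, which is of finite type over $T$, and pull back the family to $\mathcal X_0' := \mathcal X \times_T T_0'$. The tautological quotient on $T_0'$ furnishes, over $\mathcal X_0'$, a universal map $\Omega^1_{\mathcal X_0'/T_0'} \to \mathcal O_{\mathcal X_0'}(\mathcal K) \otimes \pi^\ast \mathcal O_{T_0'}(1)$, where $\pi \colon T_0' \to T$; on each geometric fibre $(\mathcal X_0')_{t'}$, this recovers, up to scalar, the section of $T_{\mathcal X_t}(\mathcal K\vert_{\mathcal X_t})$ classified by $t'$. The locus $T' \subset T_0'$ over which this universal map is non-zero and saturated (i.e. defines an honest rank one foliation with the prescribed canonical divisor) is constructible, hence a finite union of locally closed subsets; replacing $T'$ by the corresponding stratification, it is of finite type over $T$, so the induced family $f' \colon \mathcal X' = \mathcal X_0'\times_{T_0'} T' \to T'$ is bounded, and the universal map defines a rank one foliation $\mathcal F$ on $\mathcal X'$, tangent to $f'$, with the required universal property.

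The main technical obstacle is verifying that the formation of $\mathcal V$ and of the saturation locus behaves well in families, in particular that $\mathcal Hom(\Omega^1_{\mathcal X/T}, \mathcal O_{\mathcal X}(\mathcal K))$ and its pushforward commute with base change after stratification. This is where the hypothesis that the fibres are normal (so that $T_{\mathcal X_t}$ is well-defined as the dual of $\Omega^1_{\mathcal X_t}$) and that $\mathrm{Supp}\,\mathcal K$ does not contain any fibre are used in an essential way, combined with Grothendieck's generic freeness and standard semicontinuity; everything else is Noetherian induction.
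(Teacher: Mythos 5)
Your proposal is correct and follows essentially the same route as the paper: stratify $T$ so that the pushforward of $T_{\mathcal X/T}(\mathcal K)$ (equivalently $\mathcal Hom(\Omega^1_{\mathcal X/T},\mathcal O_{\mathcal X}(\mathcal K))$) becomes a vector bundle commuting with base change, then use the associated bundle over $T$ as the new parameter space carrying a universal section, hence a universal foliation tangent to the fibres. The only cosmetic differences are that you projectivize (the paper takes the total space of the vector bundle minus the zero section, which amounts to the same parametrization up to scalars) and that you explicitly cut out the constructible locus where the section is saturated with the prescribed canonical divisor, a point the paper leaves implicit.
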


\begin{proof}
Stratifying $T$ if necessary, we can assume that $f_\ast T_{\mathcal{X}/T}(\mathcal K)$ is a locally free sheaf, by making the natural map 
\begin{align*}
H^0(\mathcal X, T_{\mathcal{X}/T}(\mathcal K))) \to 
H^0(\mathcal X_t,T_{\mathcal{X}_t}(\mathcal K\vert_{\mathcal X_t}))
\end{align*}
surjective.
Here $T_{\mathcal{X}/T}$ is defined by the standard exact sequence induced by the differential of $f$,
\begin{align*}
\xymatrix{
0 \ar[r] &
T_{\mathcal{X}/T} \ar[r] &
T_{\mathcal X} \ar[r] &
f^\ast T_T
}
\end{align*}
Then, for any connected component $\overline T$ of $T$, we denote by $V^0_{\overline{T}}$ the total space of the vector bundle $V_{\overline T}$ on $\overline T$ minus its $0$-section, where $V_{\overline T}$  is the vector bundle associated to the locally free sheaf
$\mathcal V_{\overline T} \coloneqq 
f_\ast T_{\mathcal{X}/T}(\mathcal K)\vert_{\overline T}$. 
We denote by 
$v_{\overline{T}} \colon V^0_{\overline{T}} \to \overline{T}$ 
the structure map and by 
$\sigma \in H^0(V^0_{\overline{T}}, v_{\overline{T}}^\ast(f_\ast T_{\mathcal{X}/T}(\mathcal K)\vert_{\overline T})$ 
the tautological section.
We have the following commutative fibre-square diagram
\begin{align*}
\xymatrix{
\mathcal X^0 \coloneqq \mathcal X \times_T \overline{V}_{\overline{T}}^0 
\ar[r]^(.7){p} \ar[d]_{f_{\overline{V}_{\overline{T}}^0} } &
\mathcal X\vert_{\overline T} 
\ar[d]^{f\vert_{\overline T}} 
\\
V^0_{\overline{T}} \ar[r]_{v_{\overline{T}}}
&
\overline T
}
\end{align*}
As $v_{\overline T}$ is smooth, then the same holds for $p$, so that the pullback $p^\ast \mathcal K$ is well-defined, $p^\ast \mathcal O(\mathcal K) \simeq \mathcal O(p^\ast \mathcal K)$, and there is a natural isomorphism 
\begin{align*}v_{\overline{T}}^\ast(f_\ast T_{\mathcal{X}/T}\mathcal K\vert_{\overline T})
\simeq 
(f_{\overline{V}_{\overline{T}}^0})_{\ast} T_{\mathcal X^0 / \overline{V}_{\overline{T}}^0}(p^\ast \mathcal K).
\end{align*}
Under such isomorphism, the tautological section $\sigma$ is mapped to the section 
$\sigma' \in H^0(\overline{V}_{\overline{T}}^0, (f_{\overline{V}_{\overline{T}}^0})_{\ast} T_{\mathcal X^0 / \overline{V}_{\overline{T}}^0}(p^\ast \mathcal K))$  
that at
$w=(\overline t, \overline s) \in V^0_{\overline{T}}$, 
$\overline t=v_{\overline T}(w) \in \overline T$, 
$\overline s \in H^0(\mathcal{X}_{\overline{t}},\overline{T}_{\mathcal{X}_{\overline{t}}}(\mathcal K\vert_{\mathcal X_{\overline{t}}}) \setminus \{0\}$,
naturally induces the injection 
\begin{align*}
\xymatrix{
0 \ar[r] &
-\mathcal K\vert_{\mathcal{X}_{\overline{t}}} \ar[r]^\sigma &
T_{\mathcal{X}_{\overline{t}}}.
}
\end{align*}
In particular, any foliation $\mathcal G_t$ in the statement of the lemma will appear as one of the foliations just constructed. 
\end{proof}

The above result justifies the following definition a bounded family of foliated triples.

\begin{defn}
A bounded family of d-dimensional foliated triples is the datum of a foliated triple 
$(\mathcal Y, \cal G, \Gamma)$, 
where $\cal G$ has rank one,
and a projective morphism 
$f \colon \cal Y \to T$ 
to a variety of finite type $T$ 
such that
\begin{enumerate}
\item 
$f$ is a family of foliations over $T$ 
(with respect to the foliation $\cal G$), 
in the sense of ~\cite[Definition~2.3]{Chen19}; and,

\item 
for all $t\in T$, 
$(\mathcal Y_t, \cal G_t, \Gamma_t)$
is a projective foliated triple with 
$\dim \cal Y_t=d$.
\end{enumerate}
\end{defn}

We will often use the streamlined notation 
$f \colon (\cal Y, \cal G, \Gamma) \to T$
to denote bounded families of triples.
Given one such bounded family of triples and 
$t \in T$, 
we will denote by 
$(\cal Y_t, \cal G_t, \Gamma_t)$ 
the projective foliated triple induced on the fibre over the point $t$.

We will also say that a collection 
$\mathfrak D$ 
of projective 
$d$-dimensional 
foliated triples is bounded (or, forms a bounded family), if there exists a bounded family of foliations
$f \colon (\cal Y, \cal G, \Gamma) \to T$
such that any triple 
$(X, \cal F, \Delta)$ 
appears as one of the fibers of family given by 
$f$.

\section{Adjoint MMP}

Let $X$ be a smooth projective surface and $\cal F$ a foliation with reduced singularities.
Under this assumption, it is known that we may run either one of a $K_X$-MMP or a $K_{\cal F}$-MMP starting at $X$, or at $(X, \cal F)$, respectively.
The goal of this section is to show that we may also run a $K_{(X, \cal F, \Delta), \epsilon}$-MMP
for $\epsilon>0$ sufficiently small, see Theorem~\ref{thm_adj_mmp}.
We will show, moreover, that the singularities that arise in the run of such MMP are relatively mild, cf. Corollary~\ref{cor_bounded_sing}.

\subsection{Running the $K_{(X, \cal F, \Delta), \epsilon}$-MMP}

We start by proving the existence and termination of the $K_{(X, \cal F, \Delta), \epsilon}$-MMP.

\begin{theorem}
\label{thm_adj_mmp}
Let $(X, \cal F, \Delta)$ be a projective foliated triple where $X$ is a surface, 
$(X, \Delta)$ is dlt
and  $\cal F$ is rank one.
Fix $\delta \geq 0$.
Suppose that $\Delta \in I \cap [0, 1-\delta]$ where $I\subset [0, 1]$ is a DCC set. 
Fix $0<\epsilon <E(I)$ where $E(I)$ is as in Proposition~\ref{cor_bound2}.
Suppose that $(X, \cal F, \Delta)$ is $(\epsilon, \delta)$-\alc.

Then we may run a 
$K_{(X, \cal F, \Delta), \epsilon}$-MMP
$\rho\colon X \rightarrow Y$. 
Moreover,  setting $\cal G~:=~\rho_*\cal F$, $\Gamma:= \rho_\ast \Delta$ the following properties hold:

\begin{enumerate}
\item $(Y, \cal G, \Gamma)$ is $(\epsilon, \delta)$-\alc.

\item $(\cal G, \Gamma_{\text{n-inv}})$ has log canonical singularities and $(Y, \Gamma)$ is dlt.

\item If $K_{(X, \cal F, \Delta), \epsilon}$ is pseudo-effective then
$K_{(Y, \cal G, \Gamma), \epsilon}$ is nef.

\item If $K_{(X, \cal F, \Delta), \epsilon}$ is not pseudo-effective then 
 $Y$ admits a fibration $f\colon Y \rightarrow Z$ with $\rho(X/Y) = 1$ and
such that $-K_{(Y, \cal G, \Gamma), \epsilon}$ is $f$-ample.
\end{enumerate}
\end{theorem}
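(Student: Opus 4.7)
The plan is to construct the MMP iteratively: at each step either $K_{(X,\cal F,\Delta),\epsilon}$ is already nef (and we stop in case (3)), or we produce a $K_{(X,\cal F,\Delta),\epsilon}$-negative extremal ray $R$, contract it via either the classical or the foliated surface MMP, and verify that the adjoint singularities are preserved through Lemma~\ref{lem_ed_sing_pres}. Along the way, the hypothesis $\epsilon<E(I)$ and Proposition~\ref{cor_bound2} are what guarantee that $(\cal F,\Delta_{\text{n-inv}})$ is already log canonical at every stage, which is the bridge between the two MMPs.

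First I would decompose $K_{(X,\cal F,\Delta),\epsilon}=(K_\cal F+\Delta_{\text{n-inv}})+\epsilon(K_X+\Delta)$ and note that any $K_{(X,\cal F,\Delta),\epsilon}$-negative extremal ray $R$ must be negative against at least one summand. If $R$ is $K_X+\Delta$-negative then, since $(X,\Delta)$ is klt, Mori's cone theorem produces the contraction $\pi\colon X\to X_1$ of $R$. If instead $R$ is $K_\cal F+\Delta_{\text{n-inv}}$-negative while $(K_X+\Delta)\cdot R\geq 0$, then any curve $C$ generating $R$ is tangent to $\cal F$, and I would invoke Theorem~\ref{thm_recall_surf_mmp} to contract $R$; at strictly log canonical points of $\cal F$ on $C$, Lemmas~\ref{lem_strictly_lc_goren} and~\ref{lem_uniq_lc_place} allow me to replace $\cal F$ by its unique log canonical extraction (making the foliation canonical) and then descend the contraction. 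In either case $-K_{(X,\cal F,\Delta),\epsilon}$ is $\pi$-ample, so Lemma~\ref{lem_ed_sing_pres} yields that $(X_1,\pi_*\cal F,\pi_*\Delta)$ is again $(\epsilon,\delta)$-\alc with boundary coefficients still in $I\cap[0,1-\delta]$.

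Termination is automatic on surfaces: there are no flips and each divisorial contraction drops the Picard number. At the terminal model $Y$, either $K_{(Y,\cal G,\rho_*\Delta),\epsilon}$ is nef, yielding case~(3), or the last contraction is of fiber type, yielding the Mori fibration $f\colon Y\to Z$ of case~(4) with $-K_{(Y,\cal G,\rho_*\Delta),\epsilon}$ $f$-ample. Assertion~(1) then follows from the step-by-step preservation. For assertion~(2), I first apply Proposition~\ref{cor_bound2} to $(Y,\cal G,\rho_*\Delta)$ to deduce that $(\cal G,\rho_*\Delta_{\text{n-inv}})$ is log canonical; feeding the resulting inequality $a(E,\cal G,\rho_*\Delta_{\text{n-inv}})\geq -\iota(E)$ into the $(\epsilon,0)$-\alc bound $a(E,\cal G,\rho_*\Delta_{\text{n-inv}})+\epsilon\, a(E,Y,\rho_*\Delta)\geq -(\iota(E)+\epsilon)$ forces $a(E,Y,\rho_*\Delta)\geq -1$, so $(Y,\rho_*\Delta)$ is log canonical, and dlt-ness on the surface $Y$ follows from the standard surface classification of non-klt centers together with klt-ness of $(X,\Delta)$.

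The principal obstacle is precisely the second subcase above: $R$ is $K_\cal F+\Delta_{\text{n-inv}}$-negative, $K_X+\Delta$-nonnegative, and $\cal F$ is log canonical but possibly not canonical at some point on a curve in $R$. Theorem~\ref{thm_recall_surf_mmp} does not directly apply, and I have to argue that the partial resolution extracting the unique log canonical place remains compatible with the $K_{(X,\cal F,\Delta),\epsilon}$-contraction; this is where the smallness condition $\epsilon<E(I)$ is really used, since it forces the adjoint and foliated notions of log canonicity to match, so that the extracted place is also $K_{(X,\cal F,\Delta),\epsilon}$-trivial and the contraction descends to $X$ as required.
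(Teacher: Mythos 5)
Your overall skeleton matches the paper's: split a $K_{(X,\cal F,\Delta),\epsilon}$-negative extremal ray according to which summand it is negative for, contract by the classical MMP in the $(K_X+\Delta)$-negative case and by the foliated MMP in the $(K_{\cal F}+\Delta_{\text{n-inv}})$-negative case, preserve $(\epsilon,\delta)$-adjoint log canonicity via the negativity lemma (Lemma~\ref{lem_ed_sing_pres}), and use Proposition~\ref{cor_bound2} at every step to keep $(\cal F_i,\Delta_{i,\text{n-inv}})$ log canonical. Termination and the derivation of (1)--(4) are also in line with the paper.

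However, at the step you yourself single out as ``the principal obstacle'' there is a genuine gap, and your proposed fix is not the right mechanism. When the ray $R_i$ is $(K_{\cal F_i}+\Delta_{i,\text{n-inv}})$-negative and the generating curve $C_i$ meets a strictly log canonical singularity of $\cal F_i$, your plan is to extract the unique log canonical place (Lemmas~\ref{lem_strictly_lc_goren}, \ref{lem_uniq_lc_place}), contract upstairs, and ``descend the contraction'', claiming that $\epsilon<E(I)$ makes the extracted place $K_{(X,\cal F,\Delta),\epsilon}$-trivial. That last claim is unjustified and in general false: Proposition~\ref{cor_bound2} only says that $(\epsilon,0)$-adjoint log canonical implies foliated log canonical; it does not make the lc place adjoint-crepant (its adjoint discrepancy is $a(E,\cal F)+\epsilon\,a(E,X)$, and $a(E,X)$ need not vanish), and even granting crepancy you would still have to contract the exceptional divisor together with the strict transform of $C_i$ to descend, which you do not address (nor is it clear $C_i'$ stays extremal on the modification). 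The paper's resolution is different and is the key point you are missing: by Lemma~\ref{lem_lc_mmp} (F-dlt modification plus Reeb stability), a $(K_{\cal F_i}+\Delta_{i,\text{n-inv}})$-negative curve through a strictly log canonical point \emph{moves in a family covering the surface}, so the adjoint divisor is negative on a covering family and hence not pseudo-effective. In the pseudo-effective case this means the bad configuration simply never occurs, so $\cal F_i$ is canonical near $C_i$ and \cite[Theorem 8.17]{Spicer17} (your Theorem~\ref{thm_recall_surf_mmp}) applies; in the non-pseudo-effective case this ray produces the Mori fibre space of conclusion (4) rather than a divisorial contraction. Without this dichotomy your induction does not close.
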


\begin{proof}
First, let us note that Lemma~\ref{lem_perturb_epsilon}.\ref{lem_perturb_epsilon:part1} implies that 
$(X, \mathcal F, \Delta)$ is $\epsilon$-\alc;
thus, by our choice of $\epsilon$, Proposition \ref{cor_bound2} implies that
$(\cal F, \Delta_{n-inv})$ 
has log canonical singularities.
We now explain how to run the $K_{(X, \cal F, \Delta), \epsilon}$-MMP on $X$.
\\
If $K_{(X, \cal F, \Delta), \epsilon}$ is nef, then we immediately stop and define $\rho$ to be the identity on $X$; 
in this case, properties (1)-(4) are straightforwardly satisfied.
Thus, we may and will assume that 
$K_{(X, \cal F, \Delta), \epsilon}$ 
is not nef; 
hence, there exists an extremal ray 
$R_0 \subset \ccurves{X}$ 
such that 
$K_{(X, \cal F, \Delta), \epsilon} \cdot R_0 < 0$.
Since, by definition, 
$K_{(X, \cal F, \Delta), \epsilon} = (K_{\cal F}+\Delta_{n-inv})+\epsilon (K_X+\Delta)$, 
then 
\begin{align*}
\text{either} 
\quad 
(K_{\cal F}+\Delta_{n-inv}) \cdot R_0< 0, 
\quad
\text{or} 
\quad 
(K_X+\Delta) \cdot R_0 < 0.
\end{align*}
We shall consider the two cases separately:
\begin{enumerate}
\item[(i)] 
$(K_{\cal F}+\Delta_{n-inv}) \cdot R_0<0$:
then $R_0$ is spanned by the class of an $\mathcal F$-invariant rational curve $C_0$, ~\cite[Theorem 6.3]{Spicer17};
moreover, Theorem~\ref{thm_recall_surf_mmp} shows that $R_0$ can be contracted.

\item[(ii)] $(K_X+\Delta) \cdot R_0< 0$:  
then $R_0$ can be contracted by the classical version of the Cone and Contraction Theorem, e.g.,~\cite[Theorem~3.7]{KM98}.
\end{enumerate}
We denote the contraction constructed in (i)/(ii) by $\rho_1\colon X \rightarrow X_1$.
If both 
$(K_{\cal F}+\Delta_{n-inv}) \cdot R_0< 0$ 
and 
$(K_X+\Delta) \cdot R_0 < 0$,
then the contraction $\rho_1$ is independent of the choice of (i)/(ii)
because the contracted curves coincide.

If $\dim X_1 < \dim X$, then we stop and take $\rho$ to be the identity again.
In this case then $K_{(X, \cal F, \Delta), \epsilon}$ is not pseudo-effective, since $R_0$ is contained in the cone of movable curves of $X$, as the fibers of $\rho_1$ move and their classes all lie in $R_0$.

If $\dim X_1 = \dim X$, then $\rho_1$ is birational and we set  
$\mathcal F_1:=\rho_{1\ast} \mathcal F$, 
$\Delta_1 := \rho_{1\ast} \Delta$.
Lemma~\ref{lem_ed_sing_pres} implies that $(X_1, \mathcal F_1, \Delta_1)$ is $(\epsilon, \delta)$-\alc.

\medskip

{\bf Claim 1}. 
{\it If $\rho_1$ is birational, then  $(X_1, \Delta_1)$ is dlt and $(\mathcal F_1, \Delta_{1, n-inv})$ is log canonical}.

\begin{proof}[Proof of the Claim 1]
We first deal with the singularities of $(X_1, \Delta_1)$.
If $\rho_1$ is obtained via (ii), then the conclusion follows at once from the negativity lemma since $(X, \Delta)$ is dlt and $\rho_1$ is a $(K_X+\Delta)$-negative contraction.
Hence, we can assume that $\rho_1$ is a $(K_{\cal F}+\Delta_{n-inv})$-negative birational contraction.
Denoting by $C_0$ the rational invariant curve contracted by $\rho_1$, then $\cal F$ is canonical in a neighborhood of $C_0$ by Lemma~\ref{lem_lc_mmp} and the conclusion then follows from~Remark~\ref{remark_dlt_preserved}, since $\rho_1$ is a $(K_X+\Delta+C_0)$-negative contraction and the pair $(X, \Delta+C_0)$ is dlt.

To show that $(\mathcal F_1, \Delta_{1, n-inv})$ is log canonical, it suffices to apply Lemma~\ref{cor_bound2}, since $(X_1, \mathcal F_1, \Delta_1)$ is $(\epsilon, \delta)$-\alc.
\end{proof}

We may then substitute $(X, \mathcal F, \Delta)$ with $(X_1, \mathcal F_1, \Delta_1)$ and repeat the above process.
We therefore obtain a sequence of contractions
\begin{align}
\label{eqn:mmp.seq}
\xymatrix{
X=: X_0 \ar[r]^{\rho_1} & 
X_1 \ar[r]^{\rho_2} &
\dots \ar[r]^{\rho_i} &
 X_i \ar[r]^{\rho_{i+1}} &
\dots .
}
\end{align}
We set, inductively, $\mathcal F_j := \rho_{j\ast} \mathcal F_{j-1}$ and $\Delta_j:=\rho_{j\ast} \Delta_{j-1}$.
At each step, $\rho_j$ contracts an extremal ray $R_{j-1} \subset \ccurves{X_{j-1}}$ having negative intersection with $K_{(X_{j-1}, \cal F_{j-1}, \Delta_{j-1}), \epsilon}$. 
Since $X$ is a projective surface, $X_j$ is projective as well for all $j$. 
Since the Picard number of $X_j$ decreases at each step,
the above sequence of contractions cannot be infinite.
Hence, there exists $n\in \mathbb N$ such that
\begin{enumerate}
\item[(a)]
either $K_{(X_n, \cal F_n, \Delta_n), \epsilon}$ is nef, or
\item[(b)]
there exists a $K_{(X_n, \cal F_n, \Delta_n), \epsilon}$-negative extremal ray $R_n \subset \ccurves{X_n}$ whose contraction induces a fibration $X_{n} \to X_{n+1}$ with $\dim X_{n}>\dim X_{n+1}$.
Thus, $K_{(X_n, \cal F_n, \Delta_n), \epsilon}$ is not pseudo-effective, since $R_n$ is spanned by curves that move in $X_n$. 
\end{enumerate}
In both cases (a), (b), we set $Y:=X_n$ and $\rho=\rho_{n-1} \circ \rho_{n-2} \circ \dots \circ \rho_2 \circ \rho_1$ and in case (b) we set $Z:=X_{n+1}$.

We are then ready to prove the 4 statements of the Theorem.
\begin{enumerate}
\item 
The conclusion follows inductively from Lemma~\ref{lem_ed_sing_pres}, since at each step of~\eqref{eqn:mmp.seq}, $\rho_j$ contracts the $K_{(X_{j-1}, \mathcal F_{j-1}, \Delta_{j-1}), \epsilon}$-negative extremal ray $R_{j-1}$.
\item 
As $(Y, \mathcal G, \Delta)$ is $(\epsilon, \delta)$-\alc by (1), Lemma~\ref{lem_perturb_epsilon}.\ref{lem_perturb_epsilon:part1} implies that 
it is also $\epsilon$-\alc;
thus, by our choice of $\epsilon$, Proposition \ref{cor_bound2} implies that
$(\cal G, \Gamma_{n-inv})$ 
has log canonical singularities.
The conclusion on the singularities of $(Y, \Gamma)$ follows applying inductively the same proof as that of Claim 1.
\item 
If $K_{(X, \cal F, \Delta), \epsilon}$ is pseudo-effective, then $K_{(X_j, \cal F_j, \Delta_j), \epsilon}$ is also pseudo-effective for all $j=1, \dots, n$,  since 
$K_{(X_j, \cal F_j, \Delta_j), \epsilon}$
is the pushforward of 
$K_{(X_{j-1}, \cal F_{j-1}, \Delta_{j-1}), \epsilon}$.
Hence, the sequence of contractions in~\eqref{eqn:mmp.seq} must conclude with case (a) above. 
\item 
If $K_{(X, \cal F, \Delta), \epsilon}$ is not pseudo-effective, then also $K_{(X_j, \cal F_j, \Delta_j), \epsilon}$ is not pseudo-effective for all $j=1, \dots, n$, by the negativity lemma.
Hence, the sequence of contractions in~\eqref{eqn:mmp.seq} must terminate with (b) above. By ~\cite[Corollary 3.17]{KM98} and 
Theorem~\ref{thm_recall_surf_mmp}, then $\rho(X_n/X_{n+1}) = 1$.
\end{enumerate}
\end{proof}

Exactly as in the classical case, the proof of Theorem~\ref{thm_adj_mmp} can be adapted to yield a proof of the following relative version of the statement.
The interested reader can find a detailed explanation of how to reduce from the MMP on a projective variety to the relative case in~\cite[\S 3.6-7]{KM98}.

\begin{corollary}
\label{cor_rel_mmp}
Let $(X, \cal F, \Delta)$ be a foliated triple where $X$ is a surface, $(X, \Delta)$ is dlt and  $\cal F$ is rank one and let $\pi\colon X \rightarrow S$ be a projective morphism.
Fix $\delta \geq 0$.
Suppose that $\Delta \in I \cap [0, 1-\delta]$ where $I\subset [0, 1]$ is a DCC set. 
Fix $0<\epsilon <E(I)$ where $E(I)$ is as in Proposition~\ref{cor_bound2}.
Suppose that $(X, \cal F, \Delta)$ is $(\epsilon, \delta)$-\alc.

Then we may run a 
$K_{(X, \cal F, \Delta), \epsilon}$-MMP relative over $S$
\begin{align*}
\xymatrix{
X \ar[rr]^{\rho} \ar[dr]& & Y \ar[dl]\\
& S &
}
\end{align*} 
Moreover,  setting $\cal G~:=~\rho_*\cal F$, $\Gamma:= \rho_\ast \Delta$ the following properties hold:

\begin{enumerate}
\item $(Y, \cal G, \Gamma)$ is $(\epsilon, \delta)$-\alc.

\item $(\cal G, \Gamma_{\text{n-inv}})$ has log canonical singularities and $(Y, \Gamma)$ is dlt.

\item If $K_{(X, \cal F, \Delta), \epsilon}$ is pseudo-effective over $S$ then
$K_{(Y, \cal G, \Gamma), \epsilon}$ is nef over $S$.

\item If $K_{(X, \cal F, \Delta), \epsilon}$ is not pseudo-effective over $S$ then $Y/S$ admits a fibration $f\colon Y \rightarrow Z$ over $S$ with $\rho(X/Y) = 1$ and such that $-K_{(Y, \cal G, \Gamma), \epsilon}$ is $f$-ample.
\end{enumerate}
\end{corollary}

When running the adjoint MMP we have precise control of the singularities of the underlying surface, as the following corollary shows.

\begin{corollary}
\label{cor_bounded_sing}
Fix a positive real number $\delta$ and a DCC set $I \subset [0, 1]$.
Fix a positive real number $\epsilon <E(I)$ where $E(I)$ is as in Proposition~\ref{cor_bound2}.

Let $(X, \cal F, \Delta)$ be a projective log smooth foliated triple where $X$ is a surface and $\mathcal F$ is rank one.
Assume that $\Delta \in I \cap [0, 1-\delta]$.
Let $\rho\colon (X, \cal F, \Delta) \rightarrow (Y, \cal G, \Theta \coloneqq \rho_*\Delta)$ be a (finite) sequence of steps of the $K_{(X, \cal F, \Delta), \epsilon}$-MMP.
Then $(Y, \Theta)$ has $\eta$-lc singularities where $\eta = \frac{\epsilon\delta}{1+\epsilon}$.
\end{corollary}

\begin{proof}
We argue by induction on the number $n$ of steps of the $K_{(X, \Delta, \mathcal F), \epsilon}$-MMP that compose $\rho$
\begin{align*}
\xymatrix{
X_0\coloneqq X \ar[r]^{\rho_1} &
X_1 \ar[r]^{\rho_2} &
\dots \ar[r]^{\rho_{n-1}} &
X_{n-1} \ar[r]^{\rho_n} &
X_n = Y,} 
\quad \rho= \rho_n \circ \dots \circ \rho_1.
\end{align*}
We denote inductively, 
$\mathcal F_i:=\rho_{i, \ast} \mathcal F_{i-1}, 
\mathcal F_0\coloneqq \mathcal F$,
$\Delta_i\coloneqq \rho_{i, \ast} \Delta_{i-1}, \Delta_0\coloneqq \Delta$.
Since $(X, \Delta, \mathcal F)$ is log smooth and $\Delta \in [0, 1-\delta]$ then $(X, \Delta)$ is $\delta$-lc. 
As for $\delta>0$, 
$\delta > \eta=\frac{\epsilon\delta}{1+\epsilon}$, 
the case $n=0$ is trivially settled.
Hence, we will assume that $n>0$.
Moreover, the above observation implies that it suffices to show that $(Y, \Theta)$ is $\eta$-lc at all $P \in Y$ at which $\rho^{-1}$ is not an isomorphism, i.e., 
$P \in \rho({\rm exc}(\rho))$.
We fix one such point and we distinguish two cases based on whether or not $P$ is a point at which $(\cal G, \Theta_{\text{n-inv}})$ is terminal.

\medskip

{\bf Case 1}. 
{\it $P$ is terminal for $(\cal G, \Theta_{\text{n-inv}})$.}
\\
Let $L$ be the unique germ of a leaf through $P$ and let $E$ be the union of the curves $E_i$ contracted by $\rho$ to $P$ with the reduced structure.
The $E_i$ are all $\cal F$-invariant since $\mathcal G$ is non-dicritical.
Hence, 
$\rho_*^{-1}\Theta_{\text{n-inv}} = 
\Delta_{\text{n-inv}}$.  
Moreover, near every point of 
$E \cap \text{sing}(\cal F)$ 
we know that $\cal F$ admits a holomorphic first integral.
Thus, by \cite[Lemma 2.16]{CS20},
$K_{X}+\rho_*^{-1}(\Theta_{\text{n-inv}}+L)+E$
is $\rho$-numerically equivalent to 
$K_{\cal F}+\Delta_{\text{n-inv}}$
over a sufficiently small neighborhood $U_P$ of $P \in Y$.
Moreover, 
$\Theta_{\text{n-inv}}+L \geq \Theta$, as $L$ is the unique leaf of $\mathcal F$ through $P$.
Writing 
$\Delta = \rho_*^{-1}\Theta+\sum a_iE_i$, 
then $a_i \leq 1-\delta$ by assumption and
$K_{(X, \cal F, \Delta), \epsilon}$
is $\rho$-numerically equivalent over $U_P$ to
\begin{align*}
&(K_{X}+\rho_*^{-1}(\Theta_{\text{n-inv}}+L)+E)+\epsilon(K_X+\rho_*^{-1}\Theta+\sum a_iE_i) = \\
& (1+\epsilon)(K_X+\frac{1}{1+\epsilon}(\rho_*^{-1}(\Theta_{\text{n-inv}}+L)+\epsilon \rho_*^{-1}\Theta)+\sum \frac{1+\epsilon a_i}{1+\epsilon} E_i).
\end{align*}
Since for all $i$, 
$(\rho_*^{-1}(\Theta_{\text{n-inv}}+L)+\epsilon \rho_*^{-1}\Theta) \cdot E_i 
\geq 
(1+\epsilon)(\rho_*^{-1}\Theta) \cdot E_i$ and the support of $L$ is not contained in $E$,
it follows that over $U_P$ $\rho$ coincides with a run of the $(K_X+\rho_*^{-1}\Theta+\sum \frac{1+\epsilon a_i}{1+\epsilon} E_i)$-MMP.
As $(X, \rho_\ast^{-1} \Theta + \sum E_i)$ is log smooth by assumption,
$(X, \rho_\ast^{-1}\Theta)$
is $\delta$-lc by assumption, and 
$\frac{1+\epsilon a_i}{1+\epsilon} \leq \frac{1+\epsilon (1-\delta)}{1+\epsilon}$, 
then 
\begin{align*}
K_X+\rho_*^{-1}\Theta+\sum \frac{1+\epsilon a_i}{1+\epsilon} E_i
\leq 
\eta
(K_X+\rho_*^{-1}\Theta) +
(1-\eta)
(K_X+\rho_*^{-1}\Theta+\sum E_i),
\end{align*}
for 
$\eta \coloneqq \frac{\epsilon\delta}{1+\epsilon}$.
Thus, the pair $(Y, \Theta)$ is $\eta$-lc since it is the outcome of a run of the MMP over $U_P$ for the $\eta$-lc pair 
$(X, \rho_*^{-1}\Theta+\sum \frac{1+\epsilon a_i}{1+\epsilon} E_i)$. 
\medskip


{\bf Case 2}. 
{\it $P$ is not terminal for $(\cal G, \Theta_{\text{n-inv}})$.}  
\\ 
By the inductive hypothesis, we can assume that $(X_{n-1}, \Delta_{n-1})$ is $\eta$-lc.
Moreover, we can assume that $\rho_n \colon X_{n-1} \to Y$ is the contraction of a $K_{(X_{n-1}, \mathcal F_{n-1}, \Delta_{n-1}), \epsilon}$-negative rational curve $C_{n-1}$ with $\rho_n(C_{n-1})=P$.
If $\rho_n$ is a $(K_{X_{n-1}}+ \Delta_{n-1})$-negative contraction, then the proof terminates.
Hence, we can assume that $\rho_n$ is a $(K_{\mathcal F_{n-1}}+ \Delta_{n-1})$-negative contraction.
This leads to an immediate following contradiction, by the following claim, which concludes the proof.

\medskip

{\bf Claim}. 
{\it If 
$\rho_n$ is a $(K_{\mathcal F_{n-1}}+ \Delta_{n-1})$-negative contraction,
then 
$(\mathcal G, \Theta_{n-inv})$ 
is terminal at $P$.
}
\begin{proof}[Proof of the Claim]
By Lemma~\ref{lem_lc_mmp},  
$(\mathcal F_{n-1}, \Delta_{n-1})$ 
is canonical at any point 
$Q \in X_{n-1}$ such that $\rho_n(Q)=P$, since $\rho_{n}$ is birational.
The negativity lemma readily shows that the terminality of $(\mathcal G, \Theta_{n-inv})$ at $P$ follows from the hypothesis of the claim, cf.~\cite[Lemma~3.38]{KM98}.
\end{proof}
\end{proof}

We can also show that $(\epsilon, \delta)$-adjoint log canonical models exist in the projective category.

\begin{corollary}
\label{cor_can_model}
Set up as in Theorem \ref{thm_adj_mmp}. 
Suppose in addition that $(X, \Delta)$ is klt and that $K_{(X, \cal F, \Delta), \epsilon}$ is big. 
Let $(Y, \cal G, \Gamma)$ be the output of a run of the $K_{(X, \cal F, \Delta), \epsilon}$-MMP starting on $X$.
Then, there exists a birational contraction 
$p\colon (Y, \cal G, \Gamma) \rightarrow (Y_{can}, \cal G_{can}, \Gamma_{can})$
such that
\begin{enumerate}
\item $Y_{can}$ is projective;

\item $K_{(Y_{can}, \cal G_{can}, \Gamma_{can}), \epsilon}$ is an ample $\bb Q$-Cartier divisor;

\item $(Y_{can}, \cal G_{can}, \Gamma_{can})$ has $(\epsilon, \delta)$-\alc singularities;

\item $Y_{can}$ has $\eta$-lc singularities where $\eta = \frac{\epsilon\delta}{1+\epsilon}$.
\end{enumerate}
Moreover, $\displaystyle Y_{can}$ is uniquely determined.
\end{corollary}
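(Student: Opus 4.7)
The plan is to show that $L := K_{(Y, \cal G, \Gamma), \epsilon}$ is nef, big, and semi-ample on $Y$, and then to define $p\colon Y \to Y_{can}$ as the morphism associated to $|mL|$ for $m \gg 0$, with $Y_{can} := \mathrm{Proj}\, \bigoplus_{m \geq 0} H^0(Y, \lfloor m L \rfloor)$. Since $K_\cal F$ is big, the adjoint divisor $K_{(X, \cal F, \Delta), \epsilon}$ is big (hence pseudo-effective) for $\epsilon$ sufficiently small by openness of bigness on the real Néron–Severi cone, so Theorem \ref{thm_adj_mmp}(3) yields an output with $L$ nef. Bigness of $L$ transfers from $X$ to $Y$ because the MMP contracts only $L$-negative extremal rays.

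To upgrade nef and big to semi-ample, I would invoke the Kawamata–Shokurov base-point-free theorem. Since $\Delta$ has coefficients at most $1-\delta<1$ and this bound is preserved by the MMP, $(Y, \Gamma)$ is klt. One can then rewrite
\[
L = (K_\cal G + \Gamma_{\text{n-inv}}) + \epsilon(K_Y + \Gamma),
\]
and use foliation adjunction, in the vein of the calculation performed in the proof of Corollary \ref{cor_bounded_sing}, to express $K_\cal G + \Gamma_{\text{n-inv}}$ as a positive multiple of $K_Y$ plus an effective boundary supported on the germs of leaves through the foliation singularities and on the invariant exceptional divisors. The $(\epsilon, \delta)$-\alc hypothesis (with $\delta > 0$) combined with $\epsilon < E(I)$ from Proposition \ref{cor_bound2} guarantees that after normalization by $1+\epsilon$ the resulting boundary $\Gamma'$ satisfies $L \sim_{\bb Q} (1+\epsilon)(K_Y + \Gamma')$ with $(Y, \Gamma')$ klt, and then BPF yields semi-ampleness of $L$.

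Given semi-ampleness, $Y_{can}$ is projective and $p_\ast L$ is ample $\bb Q$-Cartier from the Proj construction; moreover, since $p$ contracts exactly the $L$-trivial curves, $K_{(Y_{can}, \cal G_{can}, \Gamma_{can}), \epsilon} = p_\ast L$ is ample, giving items (1) and (2). The $(\epsilon, \delta)$-\alc property (item 3) descends via Lemma \ref{lem_ed_sing_pres}, which applies because $-L$ is $p$-nef (in fact $p$-trivial). For the $\eta$-lc bound on $Y_{can}$ (item 4), the negativity argument from the proof of Corollary \ref{cor_bounded_sing} can be adapted: each $L$-trivial curve contracted by $p$ is either of $K_\cal F + \Delta_{\text{n-inv}}$-negative or $K_X + \Delta$-negative type, and in both cases $\eta$-lc is preserved with the same $\eta$ depending only on $\epsilon$ and $\delta$.

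The principal obstacle is the klt reduction in the second step: producing an effective $\Gamma'$ with $L \sim_{\bb Q} (1+\epsilon)(K_Y + \Gamma')$ and all coefficients of $\Gamma'$ strictly less than $1$. Near a strictly log canonical singularity of $\cal G$ the correction term coming from the conversion between $K_\cal G$ and $K_Y$ contributes non-trivially, and ensuring that combining this with $\Gamma_{\text{n-inv}}$ and $\epsilon \Gamma$ still leaves klt coefficients requires the full strength of $\epsilon < E(I)$ together with $\delta > 0$. This is precisely the regime where the $(\epsilon, \delta)$-\alc machinery developed earlier in the paper plays its decisive role, both in controlling the local contribution at each foliation singularity and in bounding the coefficients of the invariant components of the boundary.
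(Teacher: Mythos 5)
Your overall strategy (prove that $L := K_{(Y,\cal G,\Gamma),\epsilon}$ is semi-ample and take $Y_{can}=\mathrm{Proj}$ of its section ring) differs from the paper's, which never invokes base-point-freeness: the paper contracts the finitely many $L$-trivial curves one at a time, noting $C^2<0$ and splitting into the cases $(K_{\cal G}+\Gamma_{\text{n-inv}})\cdot C<0$ (handled by the foliated contraction theorem, after checking $\cal G$ is canonical near $C$ since otherwise $C$ would move), $(K_Y+\Gamma)\cdot C<0$ (classical klt contraction), and the case where both are trivial (a $K_Y+tC$-negative contraction for small $t$), then deduces ampleness from strict positivity on all curves together with nefness and bigness. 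Your route would be fine if the semi-ampleness step were sound, but it has a genuine gap.

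The gap is the claimed global decomposition $L\sim_{\bb Q}(1+\epsilon)(K_Y+\Gamma')$ with $\Gamma'\geq 0$ and $(Y,\Gamma')$ klt. The computation you cite from the proof of Corollary \ref{cor_bounded_sing} is only a \emph{relative} statement: there $K_X+\rho_*^{-1}(\Theta_{\text{n-inv}}+L)+E$ is $\rho$-numerically equivalent to $K_{\cal F}+\Delta_{\text{n-inv}}$, where $L$ is the germ of a leaf through the point being contracted, and this is valid only in a neighbourhood of the contracted fibre over a point where the foliation is terminal (so that a local holomorphic first integral exists, via \cite[Lemma 8.9]{Spicer17}). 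It does not globalize: the leaves of $\cal G$ are in general not algebraic, so the ``germ of leaf'' correction terms do not assemble into an effective $\bb Q$-divisor on $Y$, and the difference $K_{\cal G}-K_Y$ is the class of the normal bundle of the foliation, which need not be $\bb Q$-linearly equivalent to any effective divisor, let alone one with coefficients $<1$. Worse, the $\epsilon$-adjoint canonical model genuinely carries strictly log canonical foliation singularities (this is exactly why it does not contract elliptic Gorenstein leaves, as the paper stresses), and at such points any local adjunction-type rewriting produces a coefficient-one (lc, not klt) contribution, so the $(\epsilon,\delta)$-\alc hypothesis cannot rescue the klt condition there; your final paragraph acknowledges this obstacle but supplies no argument. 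A secondary issue: you deduce bigness of the adjoint divisor from ``$\epsilon$ sufficiently small by openness of bigness,'' but in the statement $\epsilon<E(I)$ is fixed in advance, so openness gives nothing for that particular $\epsilon$; the uniform bigness statement is the content of the later Theorem \ref{thm_big_psef} and is not available here. The parts of your argument descending items (3) and (4) (via Lemma \ref{lem_ed_sing_pres} and the argument of Corollary \ref{cor_bounded_sing}) are fine once a morphism $p$ with $L$ numerically trivial on its fibres is in hand, but as written the existence of $p$ rests on the unjustified klt decomposition.
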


\begin{proof}
First, note that $(Y, \cal G, \Gamma)$ has $(\epsilon, \delta)$-\alc singularities.

Let $C \subset Y$ be a curve with $K_{(Y, \cal G, \Gamma), \epsilon}\cdot C = 0$.  
Since $K_{(Y, \cal G, \Gamma), \epsilon}$ is nef and big by construction, then $C^2<0$ by the Hodge Index Theorem.
There are three possibilities at this point: 
\begin{enumerate}
\item[(i)] $(K_{\cal G}+\Gamma_{\text{n-inv}})\cdot C<0$, 
\item[(ii)] $(K_Y+\Gamma)\cdot C<0$, or 
\item[(iii)] $(K_{\cal G}+\Gamma_{\text{n-inv}})\cdot C = (K_Y+\Gamma)\cdot C = 0$.
\end{enumerate}

In case (i), 
$(\cal G, \Gamma_{\text{n-inv}})$ 
has canonical singularities in a neighborhood of 
$C$, 
as otherwise 
$C$ 
would move, c.f., the proof of  Theorem~\ref{thm_adj_mmp}; 
thus, we may contract 
$C$ 
by a 
$(K_{\cal G}+\Gamma_{\text{n-inv}})$-negative 
contraction.
In case (ii) we may contract 
$C$ 
by a 
$(K_Y+\Gamma)$-negative 
contraction.
In case (iii) we perform a 
$(K_Y+\Gamma+tC)$-negative contraction for some $t>0$ sufficiently small, which does not constitute a problem, since we are assuming that $(X, \Delta)$ is klt to start with.
In any case, we obtain a morphism $p'\colon Y \rightarrow Y'$ which contracts $C$ to a point.
$(Y', p'_{*}\cal G, p'_*\Gamma)$ is still $(\epsilon, \delta)$-\alc and the argument in 
Corollary \ref{cor_bounded_sing}
works equally well here to show 
that $Y'$ has $\eta$-lc singularities.

Repeating this process we will eventually terminate in model $(Y_0, \cal G_0, \Gamma_0)$ such that
 $K_{(Y_0, \cal G_0, \Gamma_0), \epsilon} \cdot C>0$ for all 
curves $C$, and hence $K_{(Y_0, \cal G_0, \Gamma_0), \epsilon}$ is ample.
That immediately implies the final part of the statement of the corollary, as $(p \circ \rho)^\ast K_{(Y_0, \cal G_0, \Gamma_0), \epsilon}$ realizes the positive part of the Zariski decomposition of $K_{(X, \mathcal F, \Delta), \epsilon}$ which is uniquely determined.
\end{proof}

\begin{remark}
\label{rmk_rel_ample_model}
Analogously to Corollary  \ref{cor_rel_mmp}, also Corollary \ref{cor_can_model} has a relative version with respect to a projective morphism $\pi \colon X \to S$.

Following the notation of Corollary~\ref{cor_rel_mmp}, we assume in addition that 
$(X, \Delta)$ 
is klt and that 
$K_{(X, \cal F, \Delta), \epsilon}$ 
is big over $S$. 
Denoting $(Y, \cal G, \Gamma)$ be the output of a run of the $K_{(X, \cal F, \Delta), \epsilon}$-MMP over $S$, starting from $X$, there exists a birational contraction 
$p\colon (Y, \cal G, \Gamma) \rightarrow (Y_{can}, \cal G_{can}, \Gamma_{can})$
such that
\begin{enumerate}
\item $Y_{can}$ is projective over $S$;

\item $K_{(Y_{can}, \cal G_{can}, \Gamma_{can}), \epsilon}$ is an ample $\bb Q$-Cartier divisor over $S$;

\item $(Y_{can}, \cal G_{can}, \Gamma_{can})$ has $(\epsilon, \delta)$-\alc singularities;

\item $Y_{can}$ has $\eta$-lc singularities where $\eta = \frac{\epsilon\delta}{1+\epsilon}$.
\end{enumerate}
Moreover, $\displaystyle Y_{can}/S$ is uniquely determined.
\end{remark}

The last corollary motivates the following definition of an ample model.

\begin{defn}
We call $(Y_{can}, \cal G_{can}, \Gamma_{can})$ the {\bf $(\epsilon, \delta)$-\alc model} of $(X, \cal F,  \Delta)$ (or the 
{\bf $\epsilon$-\acanonical model} when $\Delta = 0$).  
\end{defn}

Corollary~\ref{cor_can_model} marks a notable difference with the theory of the MMP for the canonical divisor of general type foliations with canonical singularities.
We recall that the canonical model of a surface foliation, in the sense of \cite{McQuillan08},
is not necessarily projective, owing to the presence of cusp type singularities arising from the contraction of elliptic Gorenstein leaves (e.g.l.s) to points.  
We emphasize that the
$\epsilon$-\acanonical model, by contrast, is always projective and does not contract any e.g.l.s to points.

\section{A bound on the pseudo-effective threshold in the big case}
The goal of this section is to prove the following.

\begin{theorem}
\label{thm_big_psef}
Fix a DCC set $I \subset [0,1]$. 
There exists a positive real number $\tau=\tau(I)$ satisfying the following property:

If $(X, \cal F, \Delta)$ is a projective log smooth triple such that $X$ is a surface, $\mathcal F$ is rank one, 
$K_{\cal F}+\Delta_{n-inv}$ is big, and $\Delta \in I$, then $K_{(X, \cal F, \Delta), \tau}$ is big.
\end{theorem}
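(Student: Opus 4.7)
The plan is to reduce to a minimal model via the foliated MMP and then derive uniformity of the pseudo-effective threshold by a combination of intersection-theoretic estimates on the minimal model and an ACC-style input.

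Since $(X,\cal F,\Delta)$ is foliated log smooth, $\cal F$ has simple singularities and each non-invariant component of $\Delta$ is transverse to $\cal F$, so the pair $(\cal F,\Delta_{\text{n-inv}})$ has canonical singularities and $(X,\Delta)$ is log canonical. Theorem~\ref{thm_recall_surf_mmp} thus allows us to run a $(K_{\cal F}+\Delta_{\text{n-inv}})$-MMP $\rho\colon X\to Y$, which by bigness of $K_{\cal F}+\Delta_{\text{n-inv}}$ terminates in a model where $K_{\cal G}+\Gamma_{\text{n-inv}}$ is nef and big, with $\cal G\coloneqq\rho_\ast\cal F$ and $\Gamma\coloneqq\rho_\ast\Delta$.

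Writing
\[
K_{\cal F}+\Delta_{\text{n-inv}}=\rho^\ast(K_{\cal G}+\Gamma_{\text{n-inv}})+F,\qquad K_X+\Delta=\rho^\ast(K_Y+\Gamma)+G,
\]
we have $F\geq 0$ by canonicity and $G$ is $\rho$-exceptional with coefficients $\geq -1$ by log canonicity of $(X,\Delta)$. Consequently,
\[
K_{(X,\cal F,\Delta),\tau}=\rho^\ast K_{(Y,\cal G,\Gamma),\tau}+(F+\tau G),
\]
and for any $\tau$ small enough that $F+\tau G$ is effective, bigness of $K_{(Y,\cal G,\Gamma),\tau}$ implies bigness of $K_{(X,\cal F,\Delta),\tau}$. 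Note that $\Gamma\in I$ as well.

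The task is now to produce a uniform $\tau=\tau(I)>0$ for which $K_{(Y,\cal G,\Gamma),\tau}$ is big. Since $K_{\cal G}+\Gamma_{\text{n-inv}}$ is nef and big, $(K_{\cal G}+\Gamma_{\text{n-inv}})^2>0$, and
\[
(K_{(Y,\cal G,\Gamma),\tau})^2=(K_{\cal G}+\Gamma_{\text{n-inv}})^2+2\tau (K_{\cal G}+\Gamma_{\text{n-inv}})\cdot (K_Y+\Gamma)+\tau^2 (K_Y+\Gamma)^2.
\]
Together with pseudo-effectivity, positivity of the right-hand side forces bigness. For each individual triple, openness of the big cone gives some positive such $\tau$; the non-trivial content is the uniformity.

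The main obstacle is precisely this uniformity. The plan is to argue by contradiction: assume a sequence $(X_n,\cal F_n,\Delta_n)$ with $\tau^\ast_n\coloneqq \sup\{\tau>0:K_{(X_n,\cal F_n,\Delta_n),\tau}\text{ is big}\}\to 0$. Passing to the minimal models $(Y_n,\cal G_n,\Gamma_n)$, one rephrases the failure of bigness of $K_{(Y_n,\cal G_n,\Gamma_n),\tau_n}$ as the attainment of a log canonical threshold tending to $1$ from below on the underlying pair $(Y_n,\Gamma_n)$, in the spirit of Case 2 in the proof of Proposition~\ref{cor_bound2}. The ACC for log canonical thresholds of \cite{HMX12}, combined with the DCC hypothesis on $I$, then forces the sequence to stabilize away from $0$, yielding the desired $\tau(I)$. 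A secondary technical point is to ensure that the lift inequality $F+\tau G\geq 0$ holds with a $\tau$ depending only on $I$; if the bare $(K_{\cal F}+\Delta_{\text{n-inv}})$-MMP does not suffice, one can instead run a $K_{(X,\cal F,\Delta),\tau}$-adjoint MMP (Theorem~\ref{thm_adj_mmp}), whose discrepancies are controlled by the $(\epsilon,\delta)$-\alc framework.
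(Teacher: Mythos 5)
There is a genuine gap, and it sits exactly where you flag the ``main obstacle'': the uniformity of $\tau$ is the entire content of the theorem, and the route you sketch for it does not work. Failure of bigness of $K_{(Y_n,\cal G_n,\Gamma_n),\tau_n}$ is a global positivity condition on the pseudo-effective threshold of $K_{Y_n}+\Gamma_n$ against the big divisor $K_{\cal G_n}+\Gamma_{n,\text{n-inv}}$; it is not the attainment of a log canonical threshold on $(Y_n,\Gamma_n)$, and Case~2 of Proposition~\ref{cor_bound2} (which compares \emph{local} foliation discrepancies with log discrepancies of $(X,\Delta+L)$ along a leaf) gives no bridge between the two. Moreover, after a bare $(K_{\cal F}+\Delta_{\text{n-inv}})$-MMP the surfaces $Y_n$ do not lie in any bounded family ($K_{Y_n}$ can be arbitrarily negative), so neither ACC for log canonical thresholds nor the DCC hypothesis on $I$ has anything to bite on. The paper's actual mechanism is different: first reduce the coefficients to a \emph{finite} set $J\subset I$ (Proposition~\ref{prop_shrink_boundary}, using \cite[Theorem 1.3]{HMX12} on the fibres of the first-integral fibration); then run the \emph{adjoint} MMP (Theorem~\ref{thm_adj_mmp}), and in the non-pseudo-effective case show the output is a Fano surface with $\eta$-lc singularities with $\eta=\eta(J,\epsilon_0)$, hence lies in a bounded family (Proposition~\ref{prop_boundary_in_finite_set}); boundedness alone still does not suffice, because the bounded model need not be $\tau'$-adjoint canonical for the smaller $\tau'$, so sections cannot be lifted -- this is resolved by the finiteness of eigenvalues at strictly log canonical foliation singularities (Lemma~\ref{lem_fin_many_eigs}), bounded partial resolutions near those points (Lemma~\ref{lem_resolution_logsmooth_near_lc}), and a Noetherian induction on the base of the bounded family (Proposition~\ref{prop_bound_for_nice_family}). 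None of these ingredients, nor substitutes for them, appear in your argument.

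A second, related gap is the lifting inequality $F+\tau G\geq 0$ with $\tau$ depending only on $I$, which you defer as a ``secondary technical point.'' Along a $(K_{\cal F}+\Delta_{\text{n-inv}})$-MMP the coefficients of $F$ on the contracted divisors are positive but can be arbitrarily small (they are of the form $\bigl((K_{\cal F}+\Delta_{\text{n-inv}})\cdot C\bigr)/C^2$), while $G$ can be genuinely negative since the steps are not $(K_X+\Delta)$-non-positive; so no uniform $\tau$ makes $F+\tau G$ effective in general. Switching to the adjoint MMP, as you suggest, removes this particular problem by the negativity lemma, but then you are running the MMP at a fixed $\epsilon_0$ and still must pass to a smaller threshold $\tau'<\epsilon_0$ afterwards -- and exactly there the non-effective part $F^{\tau'}$ reappears, supported on divisors over strictly log canonical centres of the foliation; controlling it is the heart of Proposition~\ref{prop_bound_for_nice_family}, not a technicality. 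So the proposal correctly reduces to a minimal model but leaves both the uniformity and the lifting step unproved, and the ACC-for-lct contradiction scheme is not a viable replacement for the boundedness plus Noetherian induction argument the paper uses.
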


\subsection{Outline of the proof of Theorem \ref{thm_big_psef}}

We divide the outline of the proof into steps.

\medskip

{\bf Step 0} As a preliminary step we show that we may freely assume that $\Delta$ has coefficients
in some fixed finite subset $J \subset I$ (Proposition \ref{prop_shrink_boundary}).  

For ease of exposition
we will assume that $\Delta = 0$ for the remainder of the proof sketch.

\medskip

{\bf Step 1} Fix some $0<\tau < E(I)$, where $E(I)$ is the constant given by Lemma~\ref{cor_bound2}.
If $K_{(X, \cal F), \tau}$ is pseudo-effective then we are done.  
Thus, we may assume that $K_{(X, \cal F), \tau}$ is not pseudo-effective.

\medskip

{\bf Step 2} 
Assume for the moment that there exists a bounded family $\ff M$
such that for every $(X, \cal F)$ with $K_{(X, \cal F), \tau}$ not pseudo-effective
there exists $(Z, \cal G) \in \ff M$ and a birational morphism $(X, \cal F) \rightarrow (Z, \cal G)$.
Assume moreover that $Z$ is smooth at all the strictly log canonical singularities of $\cal G$.

By boundedness we may then find a $\tau'$ (independent of $(Z, \cal G)$)
such that $K_{(Z, \cal G), \tau'}$ is pseudo-effective.   

In general $(Z, \cal G)$ may not be $\tau'$-\acanonical and so we cannot lift sections
to $(X, \cal F)$.  However, if $(Z, \cal G)$ is not $\tau'$-\acanonical
using some computations from \cite{PS16},  see Lemma \ref{lem_fin_many_eigs},
we show that the eigenvalues of the singular points of $\cal G$ belong to a finite set.
In particular, it follows that we may resolve the log canonical singularities of $\cal G$ in a bounded way.
Repeating the argument and using notherian induction allows us to conclude.
This is achieved in Proposition~\ref{prop_bound_for_nice_family}.

\medskip

{\bf Step 3}
We may run an adjoint MMP which terminates in a foliated pair $(Z, \cal G)$ where
$Z$ is a Fano surface with $\eta$-lc singularities, where $\eta>0$ depends only on $\tau$ and $J$.
In particular, $(Z, \cal G)$ belongs to a bounded family.  If we can arrange it so that
$Z$ is smooth at all strictly log canonical singularities of $\cal G$ then we are done by
Step 2, this is done in Proposition \ref{prop_boundary_in_finite_set}.

\medskip

{\bf Step 4}
Finally, we show that we may modify our family $(Z, \cal G)$ so that the condition
that $Z$ is smooth at all strictly log canonical singularities of $\cal G$ holds (this is done
in Lemma \ref{lem_resolution_logsmooth_near_lc}).

\subsection{Proof of Theorem \ref{thm_big_psef}}

\begin{lemma}
\label{lem_fin_many_eigs}
Fix positive real numbers $\epsilon', \delta$.
Then there exists a finite set $\Lambda \subset \bb N \times \bb N$ 
depending only on $\epsilon'$
such that the following holds:

Let $(0 \in \bb C^2, \cal F)$ be a germ of a rank one foliation such that $\cal F$ has log canonical singularities, but is not $(\epsilon', \delta)$-\alc. 
Then $T_{\cal F}$ is generated by a vector field of the form 
$px\frac{\partial}{\partial x}+qy\frac{\partial}{\partial y}$  where $(p, q) \in \Lambda$.
\end{lemma}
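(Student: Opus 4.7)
The plan is to show that $\cal F$ must be a strictly log canonical singularity at $0$ with a diagonal linear generator, and then bound the entries $(p, q)$ by analyzing the unique non-invariant log canonical place of $(\bb C^2, \cal F)$.

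First I would dispose of the canonical (including regular) case: if $\cal F$ is canonical at $0$, then $a(E, \cal F) \geq 0$ for every divisor $E$ over $0$, and any exceptional divisor on a blow-up of the smooth point $0 \in \bb C^2$ has $a(E, \bb C^2) \geq 1$, so
\[
a(E, \cal F) + \epsilon' a(E, \bb C^2) \geq \epsilon' \geq (\iota(E) + \epsilon')(-1 + \delta) \quad \text{for every } \delta \in (0, 1],
\]
contradicting the hypothesis. Hence $\cal F$ is strictly log canonical at $0$, and by \cite[Fact III.i.3]{McQP09} (cf. the proof of Lemma~\ref{lem_strictly_lc_goren}), up to rescaling, the generator of $T_\cal F$ has the form $\partial = p x\, \partial/\partial x + q y\, \partial/\partial y$ with $p, q \in \bb N_{>0}$; dividing by $\gcd(p, q)$ leaves the foliation unchanged, so I may assume $\gcd(p, q) = 1$.

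By Lemma~\ref{lem_uniq_lc_place} there is a unique divisor $E_0$ over $0$ with $\iota(E_0) = 1$ and $a(E_0, \cal F) = -1$, and a direct computation identifies it with the exceptional divisor of the weighted blow-up at $0$ with weights $(p, q)$, so that $a(E_0, \bb C^2) = p+q-1$. The $(\epsilon', \delta)$-alc inequality at $E_0$ simplifies to
\[
p + q \geq \delta\bigl(1 + \tfrac{1}{\epsilon'}\bigr),
\]
so failure at $E_0$ already yields a bound on $p + q$. To see that \emph{every} failure of $(\epsilon', \delta)$-alc on $(\bb C^2, \cal F)$ can be reduced to a failure at $E_0$ (equivalently, that $(\epsilon', \delta)$-alc is forced once $p + q \geq 1 + 1/\epsilon'$), I would invoke the F-dlt modification $\nu\colon X' \to \bb C^2$ extracting precisely $E_0$ and use the identities
\[
a(E, \cal F) = a(E, \cal F') - m_E, \qquad a(E, \bb C^2) = a(E, X') + (p+q-1)\,m_E
\]
for any divisor $E$ over $X'$ appearing via $\mu\colon Y \to X'$, where $m_E$ is the coefficient of $E$ in $\mu^* E_0$. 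Combining these gives
\[
a(E, \cal F) + \epsilon' a(E, \bb C^2) = a(E, \cal F') + \epsilon' a(E, X') + m_E\bigl(\epsilon'(p+q-1) - 1\bigr),
\]
and once $p + q \geq 1 + 1/\epsilon'$ the last term is nonnegative: combined with the canonical nature of $\cal F'$ on the klt surface $X'$ one reads off the $(\epsilon', \delta)$-alc inequality at every $E$, a contradiction. Therefore
\[
\Lambda := \bigl\{(p, q) \in \bb N_{>0}^2 : \gcd(p, q) = 1,\ p + q < 1 + \tfrac{1}{\epsilon'}\bigr\}
\]
is the required finite set, and it manifestly depends only on $\epsilon'$.

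The main technical obstacle will be controlling the residual term $a(E, \cal F') + \epsilon' a(E, X')$ uniformly in $(p, q)$: the klt thresholds at the cyclic quotient singularities of $X'$ depend on $(p, q)$ and degenerate as $p + q$ grows, so one may need to refine the F-dlt modification to a smooth log resolution and argue inductively on discrepancy, or else perform a direct case analysis along the lines of the proof of Lemma~\ref{lem_e-lc_thresh}.
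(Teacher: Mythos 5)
Your overall strategy (reduce everything to the unique non-invariant log canonical place $E_0$ and bound $p+q$) is different from the paper's, and your discrepancy formulas, the identification of $E_0$ with the $(p,q)$-weighted blow-up, and the computation $p+q\geq \delta(1+\tfrac1{\epsilon'})$ at $E_0$ are all correct. But the central reduction step is exactly where the proof is missing: you need the inequality $a(E,\cal F')+\epsilon' a(E,X')\geq(\iota(E)+\epsilon')(\delta-1)$ for every divisor $E$ over the F-dlt model $X'$, and "the canonical nature of $\cal F'$ on the klt surface $X'$" does not give this. The two cyclic quotient points of $X'$ are precisely quotients of a regular foliation by $\bb Z/p$ and $\bb Z/q$, and the paper's own example immediately after Lemma~\ref{lem_e-lc_thresh} shows such a point fails to be $\epsilon'$-\acanonical as soon as the order exceeds $2+\tfrac1{\epsilon'}$ --- which certainly happens in the regime $p+q\geq 1+\tfrac1{\epsilon'}$ you are working in (e.g. $q=1$, $p$ large). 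Moreover the compensating term $m_E\bigl(\epsilon'(p+q-1)-1\bigr)$ can be small, since $m_E=\mathrm{ord}_E\mu^*E_0$ can be of size $1/p$, so its sign alone does not rescue the estimate; you acknowledge this in your last paragraph, but that is the whole content of the lemma, so as written the proof is incomplete.

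Two remarks on how this compares with, and can be reconciled with, the paper. The paper avoids the issue entirely: it only needs \emph{some} finite set $\Lambda$, and it quotes the discrepancy computations of \cite[Lemma 4.7, Corollary 4.10]{PS16}, which yield the weaker bound $\sum_i u_i\leq \tfrac1{\epsilon'}$ for the continued fraction $[u_1,\dots,u_k]$ of $p/q$; this bounds $(p,q)$ in a finite set depending only on $\epsilon'$ without any analysis of the model $X'$. Your sharper threshold can in fact be salvaged, but not through $(X',\cal F')$: by Lemma~\ref{lem_uniq_lc_place}, together with the observation that non-invariance of a divisor persists on higher models, $E_0$ is the \emph{only} non-invariant divisorial valuation over $0$; every other exceptional divisor $E$ is invariant, so log canonicity of $\cal F$ gives $a(E,\cal F)\geq 0$ while $a(E,\bb C^2)\geq 1$, whence $a(E,\cal F)+\epsilon' a(E,\bb C^2)\geq\epsilon'>(\iota(E)+\epsilon')(\delta-1)$ for $\delta\leq 1$ (note that, as in the paper, one must implicitly take $\delta\leq 1$ for the statement to make sense). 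Thus any failure of $(\epsilon',\delta)$-\alc must occur at $E_0$, and your bound $p+q<\delta(1+\tfrac1{\epsilon'})\leq 1+\tfrac1{\epsilon'}$ follows directly, with no need for the F-dlt modification, the residual-term estimate, or the case analysis you propose as a fallback.
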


\begin{proof}
Observe that $\cal F$ is strictly log canonical at $0$ and so 
$T_{\cal F}$ is generated by a vector field of the form 
$px\frac{\partial}{\partial x}+qy\frac{\partial}{\partial y}$ where $p, q$ are positive co-prime integers.
Let $[u_1, ..., u_k]$ be the continued fraction representation of $p/q$.

Next, observe that $\cal F$ is not $\epsilon'$-\acanonical.
It follows by combining \cite[Corollary 4.10]{PS16} - keeping in mind the slight difference of notations, Remark~\ref{rmk_different_notation} -
and~\cite[Lemma 4.7]{PS16} that 
\begin{align*}
\frac{1}{\epsilon'} \geq \sum_{i=1}^k u_i,
\end{align*}
which implies that $k$ and $u_i$ are bounded in terms of $\epsilon'$.
Hence, $p/q$ can take on only finitely many values, depending only on $\epsilon'$.
\end{proof}

\begin{proposition}
\label{prop_bound_for_nice_family}
Fix a finite subset $J \subset [0, 1)$ and a real number $\epsilon>0$.
Let $\mathfrak D$
be a collection of projective foliated log smooth triples
$(X, \cal F, \Delta)$
such that 
$X$ 
is a surface, 
$\mathcal F$ 
is rank one,
$\Delta = \Delta_{n-inv}$, $\Delta \in J$,
and
$K_{\cal F}+\Delta_{n-inv}$
is big.
Suppose that there exists a bounded family of 2-dimensional foliated triples 
\begin{align}
\label{eqn:bounded.family}
f \colon (\mathcal Y, \cal G, \Gamma) \rightarrow T
\end{align}
such that for all 
$(X, \cal F, \Delta) \in \mathfrak D$
there exists 
$t \in  T$
such that
\begin{enumerate}
\item 
there exists a contraction 
$\pi_t \colon X \rightarrow \mathcal Y_t$ 
and 
$\cal G_t= \pi_{t, \ast}\cal F$, 
$\Gamma_t= \pi_{t, \ast}\Delta$;

\item 
$K_{(X, \cal F, \Delta), \epsilon} = 
\pi_t^\ast K_{(\mathcal Y_t, \cal G_t, \Gamma_t), \epsilon}+E_{\epsilon}$, 
where 
$E_{\epsilon} \geq 0$;

\item 
$\mathcal Y_t$ 
is klt, 
$(\cal G_t, \Gamma_{t})$ 
is log canonical and 
$K_{\cal G_t}+\Gamma_{t}$ 
is big;

\item 
$(\mathcal Y_t, \Gamma_t)$ 
is log smooth at all strictly log canonical points of 
$(\cal G_t, \Gamma_{t})$.
\end{enumerate}

Then there exists a positive real number 
$\tau_0 =\tau_0(J, \epsilon)$
such that for all 
$0 \leq \tau'<\tau_0$ 
and all 
$(X, \cal F, \Delta)\in \mathfrak D$, 
$K_{(X, \cal F, \Delta), \tau'}$ 
is big.
\end{proposition}

\begin{proof}
By boundedness of the family in~\eqref{eqn:bounded.family} and by assumption (3), there exists 
$\tilde{\tau}_0 \leq \epsilon$ 
such that for all 
$0 \leq s \leq \tilde{\tau}_0$ 
and all 
$t \in T$, 
$K_{(\mathcal Y_t, \cal G_t, \Gamma_t), s}$ is 
big.
For a foliated triple
$(X, \cal F, \Delta)\in \mathfrak D$, 
a point
$t \in T$ 
as in the assumption of the proposition, and 
$\lambda \in \mathbb R_{\geq 0}$, 
we define the effective divisors 
$E_{\lambda, t}, F_{\lambda, t}$ 
by
\begin{align*}
E_{\lambda, t}-F_{\lambda, t} \coloneqq 
K_{(X, \cal F, \Delta), \lambda} - 
\pi^*K_{(\mathcal Y_t, \cal G_t, \Gamma_t), \lambda}
\end{align*}
where  we assume that no prime divisor on 
$X$ 
appears in the support of both 
$E_{\lambda, t}, F_{\lambda, t}$.
Assumption (2) implies that 
$F_{\epsilon, t} = 0$; 
moreover, if 
$F_{\tilde{\tau}_0, t} = 0$, 
then 
$K_{(X, \cal F, \Delta), \tilde{\tau}_0}$ 
is big.

\medskip

{\bf Claim 1}.
{\it
Fix 
$(X, \cal F, \Delta) \in \mathfrak D$ 
and 
$t \in T$ 
as in the statement of the proposition. 
Assume that $F_{\tilde{\tau}_0, t} \neq 0$.
Then there exists a strictly log canonical singularity of 
$\mathcal G_t$ 
at which 
$(\mathcal Y_t, \mathcal G_t, \Gamma_t)$ 
is not 
$\tilde{\tau}_0$-\acanonical.
}
\begin{proof}
By item~(2) and since 
$F_{\tilde{\tau}_0, t} \neq 0$, 
there must exist a prime divisor $C \subset X$ that is $\pi$-exceptional and such that for 
$a_C \coloneqq \mu_C \Delta$,
\begin{align*}
a(C, \cal G_t, \Gamma_t)+
\epsilon a(C, \mathcal Y_t, \Gamma_t) &\geq 
-(\iota(C)+\epsilon)a_C,
\\
a(C, \cal G_t, \Gamma_t)+
\tilde{\tau}_0 a(C, \mathcal Y_t, \Gamma_t) &< 
-(\iota(C)+\tilde{\tau}_0)a_C.
\end{align*}
Hence, 
$a(C, \cal G_t, \Gamma_t) <0$ 
and
$\iota(C) = 1$, 
$C$ 
is a strictly log canonical place of 
$(\cal G_t, \Gamma_t)$, 
with center $P$ on 
$\mathcal Y_t$, 
and 
$a(C, \cal G_t, \Gamma_t) = -1$, 
as otherwise 
$\cal G_t$ 
would be non-dicritical at 
$P$ 
contradicting that
$a(C, \cal G_t, \Gamma_t) <0$.
Moreover, 
$P \notin \text{supp}(\Gamma_t)$
and 
$\cal G_t$ is not 
$\tilde{\tau}_0$-\acanonical 
at $P$.
\end{proof}

Consider the following subset $Z_0 \subset T$
\begin{align}
\nonumber
Z_0 \coloneqq
\{
t' \in T \
\vert \
& \text{there exists a strictly log canonical centre 
$P' \in \mathcal Y_{t'}$}\\
\label{def:Z_0}
& \text{for 
$\cal G_{t'}$
at which 
$(\mathcal Y_{t'}, \cal G_{t'}, \Gamma_{t'})$
is not 
$\tilde{\tau}_0$-\acanonical.}
\}
\end{align}
By Lemma \ref{lem_fin_many_eigs} and item (4) in the hypotheses of the proposition, the eigenvalues of 
$\cal G_{{t'}}$ 
at a point 
$P' \in \mathcal Y_{t'}$ 
as in~\eqref{def:Z_0} belong to a finite set;
thus,
$Z_0 \subset T$ 
is a Zariski closed subset -- with possible equality.

Given 
$(X, \mathcal F, \Delta) \in \mathfrak D$ 
such that for the foliated triple 
$(\mathcal Y_t, \cal G_t, \Gamma_t)$ 
associated to it, 
$t \in T \setminus Z_0$, 
then 
$F_{\tilde{\tau_0}, t}=0$ 
and for all 
$0 \leq s \leq \tilde{\tau}_0$,
$K_{(X, \cal F, \Delta), s}$ 
is big.

Up to stratifying $Z_0$ into a finite disjoint union of locally closed set, we may and will assume that 
$Z_0$ 
is the union of finitely many (disjoint) smooth irreducible components.
We denote by 
$\overline f \colon
(\overline{\mathcal Y}, 
\overline{\mathcal G}, 
\overline{\Gamma}) 
\to Z_0$ 
the restriction of 
$(\cal Y, \cal G, \Gamma)$
to $Z_0$.

\medskip

{\bf Claim 2}.
{\it 
There exists a surjective morphism 
$e \colon Z_0' \to Z_0$ 
finite onto its image 
and a bounded family of foliated triples 
\begin{align*}
f_0 \colon (\mathcal Y^0, \mathcal G^0, \Gamma^0) \to Z'_0
\end{align*}
such that 
\begin{enumerate}[label=(\roman*)]
\item for any $t' \in Z'_0$ there exists a morphism 
$\mu_{t'} \colon \mathcal Y^0_{t'} \rightarrow \mathcal Y_{e(t')}$ 
which is a foliated log resolution in a neighborhood of any point of 
$\mathcal  Y_{e(t')}$ 
at which  
$(\mathcal Y_{e(t')}, \Gamma_{e(t')})$
is not 
$\tilde{\tau_0}$-adjoint canonical;

\item if 
$(X, \mathcal F, \Delta)$ 
corresponds to 
$(\mathcal Y_t, \mathcal G_t, \Gamma_t)$, $t \in Z_0$ 
as in the statement of the proposition, 
then we may assume there exists
$t' \in e^{-1}(t)$ and
a morphism $\nu_{t'}\colon X \rightarrow \mathcal Y^0_{t'}$;

\item 
with the notation of item (ii),
then 
$\Gamma^0_{t'} = \nu_{t' \ast}\Delta$,
$(\mathcal Y^0_{t'}, \cal G^0_{t'}, \Gamma^0_{t'})$ 
is 
$\tilde{\tau}_0$-\acanonical 
at all log canonical centres of 
$(\cal G^0_{t'}, \Gamma^0_{t'})$, 
and
$K_{\cal G^0_{t'}}+\Gamma^0_{t'}$ is big.
\end{enumerate}

Moreover,
\begin{align}
\label{eqn:discrep.4.3}
K_{(X, \cal F, \Delta), \epsilon} = 
\nu_t^\ast K_{(\mathcal Y^0_t, \cal G^0_t, \Gamma^0_t), \epsilon}+\tilde{E}_{t, \epsilon}, 
\end{align}
with $\tilde{E}_{t, \epsilon} \geq 0$.
}

\begin{proof}
Passing to a stratification into locally closed subsets and a finite cover of $Z_0$, 
we may assume that the Zariski closed set $S'$ defined by
\begin{align*}
S' \coloneqq
\{
s \in \overline{\mathcal Y} \
\vert \ & 
\text{$(\overline{\cal G}_{\overline f(s)}, \overline{\Gamma}_{\overline f(s)})$
is strictly log canonical at 
$s \in \overline{\mathcal{Y}}_{\overline f(s)}$}\\
& \text{and 
$(\overline{\mathcal Y}_{\overline{f}(s)}, \overline{\cal G}_{\overline f(s)}, \overline{\Gamma}_{\overline f(s)})$ 
is not 
$\tilde{\tau}_0$-\acanonical 
at $s$}
\}
\end{align*}
is flat over 
$Z_0$, 
all fibers of 
$\overline f \vert_{S'} \colon S' \to Z_0$ 
are everywhere reduced, and the ratio of the eigenvalues of 
$\overline{\cal G}_{\overline f(s)}$ 
at 
$s$ 
is constant on the components of 
$S'$.
The last claim is a simple consequence of Lemma~\ref{lem_fin_many_eigs}.
Analogous reasoning shows also that there exists an upper bound on the number of strictly log canonical singularities of 
$(\overline{\cal G}_t, \overline{\Gamma}_{t})$
independent of 
$t$.

Each of the log canonical foliated surface singularities parametrized by 
$S'$ 
admits a foliated log resolution by a bounded number of blow ups, 
and the bound on the number of blow-up depends only on
$\tilde{\tau}_0$, 
as shown in Lemma~\ref{lem_fin_many_eigs} and its proof, since such singularities are not $\tilde{\tau}_0$-\acanonical.
Moreover, as the ratio of the eigenvalues of $\overline{\cal G}_{\overline f(s)}$ at $s$ is constant on the components of $S'$, we can perform these blow-ups in family and, thus, obtain a bounded family of 2-dimensional triples 
\begin{align*}
\xymatrix{
(\cal Y', \cal G', (\mu')^{-1}_*\overline \Gamma) 
\ar[dr]^{f'} \ar[rr]^{\mu} 
& &
(\overline{\cal Y}, \overline{\cal G}, \overline \Gamma)
\ar[dl]^{\overline f}
\\
& Z_0 &
}
\end{align*}
where 
$\mu$ 
is the partial resolution whose construction we just explained and 
$\cal G' \coloneqq \mu^{-1}\cal G$.
Moreover, possibly passing to a stratification of 
$Z_0$ 
into locally closed sets, and a finite covering of the irreducible components of the stratification, we may assume that for any 
$t \in Z_0$ 
there exists a 1-1 correspondence between the irreducible components of the exceptional locus of 
$\mu$ 
and those of 
$\mu_t$,
over the irreducible component of 
$Z_0$ 
containing 
$t$;
moreover, we can assume that if 
$E'$ 
is a 
$\mu$-exceptional prime divisor, then
$\iota(E'_t)=\iota(E')$, 
for any $t$ contained in the image of 
$E'$.
Let us denote by $\{E_1, \dots, E_r\}$ the $\mu$-exceptional divisors that are not $\cal G'$-invariant.
We define $W\coloneqq J \cup \{0\}$.
We define 
$Z'_{0} \coloneqq \bigsqcup_{(a_1, \dots, a_r) \in W^r} Z_0$ 
and 
$e \colon Z'_0 \to Z_0$ 
to be the identity on each copy of $Z_0$ contained in $Z'_0$.
Then, we define 
\begin{align*}
(\mathcal Y^0, \mathcal G^0, \Gamma^0)
\coloneqq
\bigsqcup_{(a_1, \dots, a_r) \in W^r} 
(\cal Y', \cal G', (\mu')_\ast^{-1} \overline \Gamma + \sum_{i=1}^r a_i E_i).
\end{align*}
The morphism $f'$ induces a morphism 
$f_0 \colon (\mathcal Y^0, \mathcal G^0, \Gamma^0) \to Z_{0}'$, 
which yields a bounded family of $2$-dimensional foliated triples.

To prove items (ii)-(iii), let 
$r\colon X' \rightarrow X$ 
resolve the the indeterminacies of the rational map 
$X \dashrightarrow \mathcal Y^0_t$,
so that 
$(X', \mathcal F'\coloneqq r^{-1}\mathcal F, \Delta' \coloneqq r_*^{-1}\Delta)$ 
is foliated log smooth. 
We denote by $\nu'_t \colon X' \to Y^0_t$ the induced morphism. 
If $K_{(X', \mathcal F', \Delta'), t}$ is big, then $K_{(X,\mathcal F, \Delta), t}$ is big.
Moreover for any 
$t>0$ 
we may write 
$K_{(X', \mathcal F', \Delta'), t} = r^*K_{(X, \mathcal F, \Delta), t}+G_{t}$ where $G_{t} \geq 0$;
thus, we are free to replace $(X, \mathcal F, \Delta)$ by $(X', \mathcal F', \Delta')$ 
in the statement of the Proposition.  
By construction, 
$\pi_{t\ast} \Delta= \Gamma_t$.
Since, 
$\Delta \in J$ 
and 
$\Delta'=r_\ast^{-1}\Delta$,
then
$(\nu'_{t\ast} \Delta' - \mu_{t\ast}^{-1} \Gamma_t)\in W$ and its support is contained only on  
$\mu_t$-exceptional 
components that are not 
$\cal G'$-invariant, 
by the definition of triples in 
$\mathfrak D$;
thus, there exists 
$(a_1, \dots, a_r) \in W^r$ 
such that 
$\nu'_{t\ast} \Delta' - (\mu')^{-1}_{t\ast} \Gamma_t = \sum_i a_i E_{i, t}$.
This completes the proofs of items (ii)-(iii).

We now prove~\eqref{eqn:discrep.4.3}.
Away from $\nu_t^{-1}(\text{exc}(\mu_t))$ this is clear;
thus, let 
$\Sigma$ be a connected component of 
$\text{exc}(\mu_t)$ 
and let 
$P \coloneqq \mu_t(\Sigma)$.
Since $P$ is a strictly log canonical singularity of 
$\cal G_t$, 
$P$ 
is not contained in the support of 
$\Gamma_{t}$.  
Moreover by Lemma \ref{lem_uniq_lc_place} there is exactly one divisor contained 
$\Sigma$ 
which is transverse to 
$\cal G^0_t$.
Hence, in a neighborhood of 
$\Sigma$, 
$\Gamma^0_{t}$ is supported on at most 1 curve.
Since $(Y^0_t, \cal G^0_t, \Gamma^0_t)$ is foliated log smooth in a neighborhood of $\Sigma$ it follows
that $(Y^0_t, \cal G^0_t, \Gamma^0_t)$ is in fact $\epsilon$-\acanonical for all $\epsilon>0$.
\end{proof}

\medskip

By boundedness of 
$Z'_0$, 
there exists 
$\tilde{\tau}_1 <\tilde{\tau}_0$ 
such that for all 
$t \in Z'_0$,
$K_{(Y_t^0, \cal G^0_t, \Gamma^0_t), \tilde{\tau}_1}$ 
is pseudo-effective.
Let 
$Z'_1 \subset Z'_0$ 
be the Zariski closed subset
\begin{align*}
Z_1 \coloneqq
\{
t' \in Z'_0 \
\vert \
& \text{there exists a strictly log canonical centre 
$P' \in \mathcal Y^0_{t'}$}\\
& \text{for 
$\cal G^0_{t'}$
at which 
$(\mathcal Y^0_{t'}, \cal G^0_{t'}, \Gamma^0_{t'})$
is not 
$\tilde{\tau}_1$-\acanonical.}
\}
\end{align*} 

We may then repeat the above argument with 
$Z'_1$ 
and we define $Z_1 \coloneqq e(Z_1)$.  
Iterating this process we produce a decreasing sequence of Zariski closed subsets
$Z_i \subsetneq Z_{i-1}$ 
of 
$T$ 
and a decreasing sequence of positive real numbers 
$0<\tilde{\tau}_i < \tilde{\tau}_{i-1}$
such that if
$(X, \cal F, \Delta) \in \mathfrak D$ 
and the corresponding point $t \in T$ given by the proposition satisfies
$t \in Z_{i-1}\setminus Z_i$,
then
$K_{(X, \cal F, \Delta), \tilde{\tau}_i}$ 
is big;
moreover, the foliated surface triples parametrized by points of 
$Z_{i}$ 
admit a log canonical singularity
which is not 
$\tilde{\tau}_{i-1}$-\acanonical.

This process must eventually terminate since at each step of the process, we reduce the number of strictly log canonical singularities on a foliated surface triple appearing in the fibers of our family.
Hence, we must eventually obtain that for some 
$n \gg 0$, 
$Z_n = \emptyset$ 
and
$K_{(X, \cal F, \Delta), \tilde{\tau}_{n-1}}$
is big for all 
$(X, \cal F, \Delta) \in \mathfrak D$.
Hence, we set
$\tau_0 \coloneqq \tilde{\tau}_{n-1}$.
\end{proof}

\begin{lemma}
\label{lem_resolution_logsmooth_near_lc}
Let
$h \colon (\cal Z, \cal L, \Xi) \to T$
be a bounded family of 2-dimensional projective foliated triples $(Z_t, \cal L_t, \Xi_t)$.
Assume that for all 
$t \in T$, 
$\Xi_{t, n-inv} = \Xi_t$ and
$(\cal L_t, \Xi_t)$ is log canonical.

Passing to a stratification of 
$T$ 
into locally closed sets, and a finite covering of the irreducible components of the stratification, there exists a bounded family of 2-dimensional projective foliated triples 
$j \colon (\cal Y, \cal G, \Gamma) \to T$
and a birational morphism over $T$,
$g \colon \cal Y \to \cal Z$,
such that for all $t \in T$,
\begin{enumerate}
\item 
$\cal G_t \coloneqq g^{-1}\cal L_t$ 
and 
$\Gamma_t \coloneqq g_\ast^{-1}\Xi_t$;

\item 
$(Y_t, \Gamma_t)$ 
is log smooth in a neighborhood of 
$g_t^{-1}(P)$ 
where $P$ is a strictly log canonical point of 
$(\cal L_t, \Xi_t)$;

\item 
$g_t$ 
only extracts divisors of discrepancy (resp. foliation discrepancy) 
$\leq 0$ 
(resp. 
$=-\iota(E)$);

\item 
any foliated log resolution 
$\tau_t \colon \overline{Z}_t \rightarrow Z_t$ 
of 
$(Z_t, \cal L_t, \Xi_t)$
factors as 
\begin{align*}
\xymatrix{
\overline{Z}_t \ar[r] \ar@/_/[rr]_{\tau_t}& Y_t \ar[r]^{g_t} & Z_t.
}
\end{align*}
\end{enumerate}

\end{lemma}
\begin{proof}
Fix 
$t \in T$ 
and let 
$P \in \cal Z_t$ 
be a point where 
$(\cal L_t, \Xi_t)$ 
is strictly log canonical.  
Thus, 
$P \notin \text{supp}(\Xi_t)$ 
and
$\cal L_t$
is strictly log canonical at $P$.
By Lemmata~\ref{lem_resolution} and~\ref{lem_strictly_lc_goren}, there exists a resolution 
$g_t\colon Y \rightarrow \cal Z_t$
by blowing up 
$\cal Z_t$
in 
$\cal L_t$-invariant 
centres.  
These blow ups only extract divisors of foliation discrepancy 
$=-\iota(E)$; 
taking 
$g_t$
to be a minimal log resolution of 
$Z_t$ 
around 
$P$, 
$g_t$
only extracts divisors of discrepancy $\leq 0$.  
Thus items (2)-(4) are satisfied.

Substituting 
$T$ 
with a stratification and taking finite covers of components, the minimal resolutions 
$g_t \colon Y_t \to \cal Z_t$
fit together in family to form a bounded family
$j \colon \cal Y \to T$
of resolutions
$g \colon \cal Y \to Z$ 
such that 
$g\vert_{\cal Y_t}=g_t$.
To conclude, it suffices to define 
$\cal G \coloneqq g^{-1} \cal L$, 
$\Gamma \coloneqq g_\ast^{-1}\Xi$.
\end{proof}

\begin{proposition}
\label{prop_boundary_in_finite_set}
Let $J \subset [0, 1)$ be a finite subset.
Let $\mathfrak D_J$ be the set of all triples  $(X, \cal F, \Delta)$ such that
\begin{enumerate}
\item 
$(X, \cal F, \Delta)$ is a projective foliated log smooth triple, $X$ is a surface, $\mathcal F$ is rank one

\item 
$\Delta_{n-inv} = \Delta \in J$, and

\item
$K_{\cal F}+\Delta_{n-inv}$ is big.
\end{enumerate}
Then there exists $\tau_0=\tau_0(J)>0$ such
that for all $0\leq \epsilon<\tau$ and for any triple $(X, \cal F, \Delta) \in \mathfrak D_J$, $K_{(X, \cal F, \Delta), \epsilon}$ is big.
\end{proposition}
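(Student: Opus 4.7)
The plan is to run the adjoint MMP of Theorem~\ref{thm_adj_mmp} on each triple in $\mathcal D_J$ and then invoke Proposition~\ref{prop_bound_for_nice_family}, exploiting the fact that the output of the MMP, when it is a Mori fibre space, lies in a bounded family.

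First I would fix a real number $\epsilon_0 < E(J)$ with $E(J)$ as in Proposition~\ref{cor_bound2}, so that the adjoint MMP applies to triples in $\mathcal D_J$. For each $(X, \cal F, \Delta) \in \mathcal D_J$, running the $K_{(X, \cal F, \Delta), \epsilon_0}$-MMP yields a birational morphism $\pi\colon (X, \cal F, \Delta) \to (Y, \cal G, \Gamma)$ such that either $K_{(Y, \cal G, \Gamma), \epsilon_0}$ is nef, or $Y$ admits a Mori fibre space structure $f\colon Y \to Z$. When $K_{(Y, \cal G, \Gamma), \epsilon_0}$ is nef, I would exploit the identity
\[
K_{(X, \cal F, \Delta), \epsilon} = \tfrac{\epsilon}{\epsilon_0}\, K_{(X, \cal F, \Delta), \epsilon_0} + \tfrac{\epsilon_0 - \epsilon}{\epsilon_0}\bigl(K_{\cal F}+\Delta_{\text{n-inv}}\bigr),
\]
valid for $0 < \epsilon \leq \epsilon_0$: the first summand is pseudoeffective, being the sum of a pullback of a nef divisor and an effective $\pi$-exceptional divisor coming from the adjoint MMP, while the second summand is big by hypothesis. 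A positive combination of a pseudoeffective and a big divisor being big, this immediately yields bigness of $K_{(X, \cal F, \Delta), \epsilon}$ for every $0 < \epsilon \leq \epsilon_0$.

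When instead $Y$ admits a Mori fibre space structure, I would argue that the triples $(Y, \cal G, \Gamma)$ so produced move in a bounded family. By Corollary~\ref{cor_bounded_sing}, $Y$ has $\eta$-lc singularities with $\eta = \eta(\epsilon_0, J) > 0$; since $\Gamma$ has coefficients in the finite set $J$ and $Y$ carries a Mori fibre space structure with $-K_{(Y, \cal G, \Gamma), \epsilon_0}$ relatively ample, classical boundedness results for $\eta$-lc Fano-type surfaces give boundedness of the pairs $(Y, \Gamma)$. Lemma~\ref{lem:bounded.fols} then upgrades this to boundedness of the foliated triples $(Y, \cal G, \Gamma)$, once one observes that fixing $\epsilon_0$ in the adjoint relation bounds $K_{\cal G}$ in terms of $K_Y$ and $\Gamma$. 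Applying Lemma~\ref{lem_resolution_logsmooth_near_lc} to this bounded family produces a new bounded family that is foliated log smooth near each strictly log canonical singularity, and through which $X$ still factors since $X$ is already foliated log smooth and dominates the minimal log resolution of $(Y, \Gamma)$. This family verifies hypotheses (1)--(4) of Proposition~\ref{prop_bound_for_nice_family}, producing the uniform $\tau = \tau(J) > 0$ such that $K_{(X, \cal F, \Delta), \epsilon}$ is big for every $0 \leq \epsilon < \tau$.

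The main obstacle, I expect, is establishing the boundedness of the Mori fibre space output as a family of foliated triples. Boundedness of $\eta$-lc weak Fano surfaces is classical, but transferring this to boundedness of the triples $(Y, \cal G, \Gamma)$ requires some care, as one must simultaneously control the variety, the boundary (whose coefficients lie in $J$), and the foliation (through Lemma~\ref{lem:bounded.fols}). A secondary but more manageable issue is checking that $\pi\colon X \to Y$ lifts to a morphism onto the modification constructed in Lemma~\ref{lem_resolution_logsmooth_near_lc}, which follows because $X$ is foliated log smooth.
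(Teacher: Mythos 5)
Your overall strategy is the paper's: fix $\epsilon_0<E(J)$, dispose of the case where $K_{(X,\mathcal F,\Delta),\epsilon_0}$ is pseudo-effective by the convex-combination identity with the big divisor $K_{\mathcal F}+\Delta_{\text{n-inv}}$, and otherwise run the adjoint MMP and feed the output, after the partial resolution of Lemma~\ref{lem_resolution_logsmooth_near_lc}, into Proposition~\ref{prop_bound_for_nice_family}. But there is a genuine gap in the Mori fibre space case: you never rule out that the fibration maps onto a curve. If $f\colon Y\to B$ with $\dim B=1$, then $Y$ is only of Fano type over $B$, and such surfaces do \emph{not} form a bounded family (the genus of $B$ is unbounded), so the ``classical boundedness results for $\eta$-lc Fano-type surfaces'' you invoke do not apply. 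Boundedness of the MMP outputs is precisely where one needs $Y$ to be a Fano surface of Picard rank one: then non-pseudo-effectivity of $K_Y+\Gamma$ and of $K_{\mathcal G}+\epsilon_0 K_Y$ give $\Gamma\leq -K_Y$ and $K_{\mathcal G}\leq -\tfrac{1}{\epsilon_0}K_Y$ numerically, which together with $\eta$-lc boundedness yields a bounded family of triples. The paper excludes the base-curve case by additionally shrinking $\epsilon_0<\tfrac{j_0}{2}$ with $j_0=\min J$ and intersecting with a general fibre $F$: the pushforward $K_{\mathcal G}+\Gamma_{\text{n-inv}}$ of a big divisor is still big, so $(K_{\mathcal G}+\Gamma_{\text{n-inv}})\cdot F\geq j_0$, while $(K_Y+\Gamma)\cdot F\geq -2$, contradicting the adjoint-negativity of the fibres. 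Without some such uniform lower bound (which uses the finiteness of $J$), your argument does not close, and the choice of $\epsilon_0$ you make does not incorporate this constraint.

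A secondary point you treat as automatic but which requires an argument is hypothesis (2) of Proposition~\ref{prop_bound_for_nice_family} for the partial resolution $g_t\colon Y_t\to Z_t$. Since the boundary on $Y_t$ used in the proposition is $\Gamma_t=\pi_{t*}\Delta_t$ and does not contain the $g_t$-exceptional divisors, one must check that $K_{(X,\mathcal F,\Delta),\epsilon_0}-\pi_t^{*}K_{(Y_t,\mathcal G_t,\Gamma_t),\epsilon_0}$ is effective; negativity of the MMP only compares $X$ with $Z_t$. The delicate case is a $g_t$-exceptional divisor transverse to the foliation lying over a strictly log canonical point: the paper handles it via Lemma~\ref{lem_uniq_lc_place} (there is exactly one such divisor, and the boundary misses the point), concluding that $(Y_t,\mathcal G_t,\Gamma_t)$ is $(\epsilon,1)$-adjoint canonical near that fibre, so nothing is lost when pulling back. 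Your proposal asserts that hypotheses (1)--(4) hold without this verification, and only flags the (comparatively easy) factorization of $X\to Y_t$ as the issue.
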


\begin{proof}
Fix 
$\epsilon_0 :=\min\{\min_{j \in J}\frac j3, E(J)\}$, 
cf. Lemma~\ref{cor_bound2} for the definition of $E(J)$. 
Clearly, $\epsilon_0 < E(J)$. 

Let us now fix $(X, \cal F, \Delta) \in \mathfrak D_J$.
As $J$ is finite and $1 \not \in J$, there exists $\delta=\delta(J):=1-\max J$ such that $J \subset [0, 1-\delta]$ and $(X, \cal F, \Delta)$ is $(\epsilon, \delta)$-\alc for all $\epsilon\geq 0$, cf. Lemma~\ref{lem_easy_ed_crit}.
If $K_{(X, \cal F, \Delta), \epsilon_0}$ is pseudo-effective there is nothing to show; 
hence, we may assume that 
$K_{(X, \cal F, \Delta), \epsilon_0}$ 
is not pseudo-effective.

Let 
$\rho\colon X \rightarrow Z$ 
be a run of the 
$K_{(X, \cal F, \Delta), \epsilon_0}$-MMP
which exists and terminates by Theorem~\ref{thm_adj_mmp}.
We set 
$\Xi := \rho_{t\ast}\Delta$ 
and 
$\cal L := \rho_{t\ast}\cal F$.
By Theorem~\ref{thm_adj_mmp}, as
$K_{(X, \cal F, \Delta), \epsilon_0}$ 
is not pseudo-effective, 
$Z$ is endowed with a Mori fibre space structure with respect to 
$K_{(Z, \cal L, \Xi), \epsilon_0}$, 
i.e., there exists a contraction 
$\psi \colon Z \to B$
with 
$\dim Z > \dim B$, 
$\rho(Z/B)=1$ 
and 
$-K_{(Z, \cal L, \Xi), \epsilon_0}$ 
is 
$\psi$-ample.
Moreover, the following properties hold:
\begin{enumerate}
\item[(i)] $Z$ has $\eta$-lc singularities for 
$0<\eta=
\frac{\epsilon_0 \delta}{1+\epsilon_0}$, 
see Corollary~\ref{cor_bounded_sing}.
Here we need that $1 \not \in J$ to conclude that $\eta >0$;

\item[(ii)] 
$K_{Z}+\Xi$ is not pseudo-effective: 
it is antiample over $B$;

\item[(iii)] $K_{(Z, \cal L, \Xi), \epsilon_0}$ is antiample over $B$. 
Hence, it is not pseudo-effective on $Z$.
The same holds for $K_{\cal L} + \epsilon_0 K_{Z}$;

\item[(iv)] $K_{(X, \cal F, \Delta), \epsilon_0} = \rho^\ast K_{(Z, \cal L, \Xi), \epsilon_0}+E$ where $E \geq 0$; and,

\item[(v)] $(\cal L, \Xi_{t, n-inv})$ is log canonical.
\end{enumerate}

All of these claims are direct consequences of the negativity lemma and Theorem~\ref{thm_adj_mmp}, Corollary~\ref{cor_bounded_sing}.
We can also show that the geometry of $Z$ is rather restrictive.

\medskip

{\bf Claim 1}. 
{\it 
$Z$ 
is a Fano surface and 
$\rho(Z)=1$, 
i.e., 
$\dim B=0$.
}
\begin{proof}[Proof of Claim 1]
If $\dim B >0$, then $B$ is a curve.
Let 
$F$ 
be a general fibre of 
$\psi$: $F$ 
is rational since 
$\psi$
is a Mori fibre space.
Then, 
$F$ 
is not $\mathcal L$-invariant for the foliation as 
$(K_{\mathcal{L}}+\Xi)\cdot F >0$ 
by the bigness of $K_{\mathcal{L}}+\Xi$;
thus, 
$(K_{\cal L}+\Xi)\cdot F \geq j_0$, 
where 
$j_0 = \min J$,
and 
$(K_{Z}+\Xi)\cdot F \geq -2$. 
Since 
$\epsilon_0 < \frac{j_0}{2}$ 
by definition, then 
$K_{(Z, \cal L, \Xi), \epsilon_0}\cdot F \geq 0$.  
On the other hand, by item (iii) 
$K_{(Z, \cal L, \Xi), \epsilon_0}\cdot F <0$ 
which leads to the sought contradiction.
\end{proof}

Let 
$\mathfrak D_{J, \epsilon _0, Fano}$ 
be the set of foliated triples 
$(Z, \cal L, \Xi)$ 
that appear as final outcomes (i.e., Mori fibre spaces) in a run of the 
$K_{(X, \cal F, \Delta), \epsilon_0}$-MMP 
for 
$(X, \cal F, \Delta) \in \mathfrak D_J$ 
with 
$K_{(X, \cal F, \Delta), \epsilon_0}$ 
not pseudo-effective.
Claim 1 readily implies the following conclusion for 
$\mathfrak D_{J, \epsilon _0, Fano}$.

\medskip

{\bf Claim 2}.
{\it 
$\mathfrak D_{J,\epsilon _0, Fano}$ forms a bounded family.
}

\begin{proof}[Proof of Claim 2]
For any triple 
$(Z, \cal L, \Xi)\in \mathcal D_{J, Fano}$, 
(i) above implies that 
$Z$ is $\eta$-lc 
for some fixed 
$\eta>0$.
By \cite[Theorem~6.9]{Alexeev}, 
$\eta$-lc 
Fano surfaces form a bounded family.
Thus, for any triple 
$(Z, \cal L, \Xi)\in \mathcal D_{J, Fano}$
there exists 
$t = t(\eta) \in \mathbb N_{>0}$ 
such that 
$-tK_{\cal Z}$ 
is very ample.
Furthermore, (ii)-(iii) imply that 
\begin{align}
\label{eqn:fano.ineq}
0 < \Xi \cdot (-K_Z) \leq -K_{Z}^2, 
\qquad 
-\Xi \leq K_{\cal L} \leq -\frac{1}{\epsilon_0}K_{Z}.
\end{align}
Hence, 
$(Z, \Xi)$ 
belong to a bounded family, since 
$\Xi \in J$ 
(and $J$ is finite) and 
$\deg_{-tK_{\cal Z}} \Xi \leq -tK_{\cal Z}^2$.
Moreover, thanks to the fact that 
$K_{\cal L}$ 
is Weil and by~\eqref{eqn:fano.ineq}, then the triples 
$(Z, \Xi, \mathcal O_{\cal Z}(K_{\cal L}))$ 
belong to a bounded family. 
Possibly stratifying $T$ into a disjoint union of locally closed subsets (which does not alter boundedness), we may assume that items (1) and (2) of Lemma~\ref{lem:bounded.fols}
 are satisfied.  By \cite[Th\'eor\`eme 12.2.1 (v)]{EGAIV} and perhaps stratifying further we may assume that item (3) holds as well.
We may then apply 
Lemma~\ref{lem:bounded.fols} to conclude.
\end{proof}

Given 
$(Z, \cal L, \Xi) \in \mathcal D_{J, \epsilon_0, Fano}$, 
by Lemma~\ref{lem_resolution_logsmooth_near_lc}, we may find a partial resolution 
$g\colon Y \rightarrow Z$ together with a morphism $\pi\colon X \rightarrow Y$
such that 
\begin{enumerate}[label = (\alph*)]

\item 
$(Y, \Gamma)$ 
is log smooth near all strictly log canonical singularities of
$\cal G$, 
where 
$\Gamma \coloneqq \pi_{\ast}\Delta$ 
and 
$\cal G \coloneqq \pi_{\ast}\cal F$;

\item $g$ only extracts divisors $E$ with $a(E, Z, \Xi) \leq 0$ and $a(E, \cal L, \Xi) = -\iota(E)$.
In particular there exists a $g$-exceptional divisor $F \geq 0$
so that $K_{(Y, \cal G, \Gamma), \epsilon_0} +F = g^\ast(K_{(Z, \cal L, \Xi), \epsilon_0})$

\item $(\cal G, \Gamma)$ is log canonical (this is a direct consequence of item (b)) and $K_{\cal G}+\Gamma$ is big;

\item 
$(Y, \Gamma)$ is log smooth at all strictly log canonical singularities of $\cal G$; and,

\item 
if 
$(X, \cal F, \Delta) \in \mathcal D_J$, 
then 
$Z$ 
is a Mori fibre space obtained from a run of a 
$K_{(X, \cal F, \Delta), \epsilon_0}$-MMP
and there exists a contraction 
$\pi\colon X \rightarrow Y$.
\end{enumerate}

Furthermore, the collection $\mathcal D_{J, \epsilon_0, Fano, res}$ of all triples $(Y, \cal G, \Gamma)$ as above is bounded, by Lemma~\ref{lem_resolution_logsmooth_near_lc}.
Thus, there exists a projective family of foliated triples
\begin{align*}
f \colon 
(\mathcal Y, \mathcal G_{\mathcal Y}, \Gamma_{\mathcal Y}) 
\to T
\end{align*}
over a base of finite type 
$T$ 
such that 
$\mathcal G_{\mathcal Y}$ 
is tangent to 
$f$ 
and for any triple 
$(Y, \cal G, \Gamma) \in \mathcal D_{J, \epsilon_0, Fano, res}$
there exists 
$t \in T$ 
and an isomorphism 
$\psi \colon Y \to \mathcal Y_t$ 
with 
$\psi_\ast \Gamma=\Gamma_{t}$ 
and 
$\psi_\ast \mathcal G=\mathcal G_{\mathcal Y, t}$.

The conclusion of the proof then follows from the following claim.

\medskip

{\bf Claim 3}.
{\it
Taking 
$\epsilon = \epsilon_0$, 
we can apply Proposition~\ref{prop_bound_for_nice_family} to 
$\mathfrak D=\mathcal D_J$ 
and to the family
$f \colon 
(\mathcal Y, \mathcal G_{\mathcal Y}, \Gamma_{\mathcal Y}) 
\to T$.
}

\medskip

At this point, we can conclude by defining
$\tau_0$ 
to be the number that Proposition~\ref{prop_bound_for_nice_family} produces in the set-up of Claim 3.
\begin{proof}[Proof of Claim 3]
It suffices to show that all the hypotheses~(1)-(4) of Proposition~\ref{prop_bound_for_nice_family} are satisfied.

Item (1) is (e) above.
Item (3) follows from (a) and (c) above.
Item (4) is (d) above.
Thus, we are left to show that item (2) holds.

Fix 
$(X, \mathcal F, \Delta) \in D_J$ 
and let 
$(Y, \cal G, \Gamma) \in \mathcal D_{J, \epsilon_0, Fano, res}$ 
be a triple obtained as the resolution of a Mori fibre space outcome of the 
$K_{(X, \mathcal F, \Delta), \epsilon_0}$-MMP.
We adopt the same notation as in the previous part of the proof.

Suppose first that every component of 
$\text{exc}(g)$ 
is $\cal G$-invariant.  
In this case, by (b) above 
$(K_{Y} +\Gamma)+F= g^\ast(K_{Z}+\Xi)$ 
and 
$(K_{\cal G} +\Gamma)= g^\ast(K_{\cal L}+\Xi)$, where 
$F \geq 0$.
In particular,
$K_{(X, \cal F, \Delta), \epsilon_0} = 
\pi^\ast K_{(Y, \cal G, \Gamma), \epsilon_0} + \tilde{E}$
where 
$\tilde{E} \geq 0$.  

So suppose that some component of $\text{exc}(g)$ is not $\cal G$-invariant.  Let $C$ be one such component
and let $P = g(C)$.  Note that $P$ is a strictly log canonical singularity
of $(\cal L, \Xi)$ and so $P$ is not contained in the support of 
$\Xi$.  
By Lemma \ref{lem_uniq_lc_place}, 
$C$ is the unique non invariant $g$-exceptional divisor
mapping to $P$.  
Thus, in a neighborhood of $g^{-1}(P)$ we see that $\Gamma$ is supported on at most one divisor.
Note that in this neighbhorhood the support of $\Gamma$ must be smooth, indeed, we know that $(\mathcal G, C)$ is log canonical,
which implies that $C$ is necessarily smooth.
It follows that $(\cal G, \Gamma)$ and $(Y, \Gamma)$ have canonical singularities
and so $(Y, \cal G, \Gamma)$ is $\epsilon$-\acanonical for all $\epsilon>0$, in particular, 
it is $\epsilon_0$-\acanonical.
\end{proof}
\end{proof}

\begin{proposition}
\label{prop_shrink_boundary}
Let $I \subset [0, 1]$ be a DCC set.  
Then there exists a finite subset 
$J \subset (I \cup \mathcal S) \setminus \{1\}$, 
with the following property:

Let $(X, \cal F, \Delta)$ be a projective foliated log smooth triple such that 
$X$ is a surface, 
$\mathcal F$ is rank one, 
$K_{\cal F}+\Delta$ is big and $\Delta \in I$.
Then there exists $\Delta' \leq \Delta$ with $\Delta' \in J$ such that $K_{\cal F}+\Delta'$ is big.
\end{proposition}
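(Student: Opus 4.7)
The plan is to argue by contradiction. Suppose no finite $J \subset I$ satisfies the conclusion. Since $I$ is a DCC subset of $[0,1]$, it is well-ordered from below and so its elements can be enumerated in increasing order $i_1 < i_2 < \dots$; set $J_n := \{i_1, \dots, i_n\}$. The failure hypothesis produces, for each $n$, a projective foliated log smooth triple $(X_n, \cal F_n, \Delta_n)$ with $\Delta_n \in I$ and $K_{\cal F_n}+\Delta_n$ big, such that no $\Delta' \leq \Delta_n$ with $\Delta' \in J_n$ makes $K_{\cal F_n}+\Delta'$ big.

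For each $n$, I would form the ``$J_n$-floor'' $\tilde \Delta_n \leq \Delta_n$ by replacing each coefficient $d$ of $\Delta_n$ by $\max((J_n \cup \{0\}) \cap [0,d])$. Then $\tilde \Delta_n \in J_n$ and, by our choice of counterexample, $K_{\cal F_n}+\tilde \Delta_n$ is not big. Parametrizing the segment between $\tilde \Delta_n$ and $\Delta_n$, let $t_n \in (0,1]$ be the smallest value so that $K_{\cal F_n}+\Delta_n^\star$ is big, where $\Delta_n^\star := \tilde \Delta_n + t_n(\Delta_n - \tilde \Delta_n)$. Note that the coefficients of $\Delta_n^\star$ are convex combinations of pairs $j \leq d$, $j \in J_n$, $d \in I$, and that as $J_n$ exhausts $I$, the differences $d - j$ produced by the truncation can be made arbitrarily small for any fixed $d \in I$.

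The core of the argument is to derive a contradiction from the existence of the threshold sequence $\{t_n\}$ together with an ACC-type statement. The plan is to apply the $\epsilon$-adjoint MMP of Theorem~\ref{thm_adj_mmp} to each $(X_n, \cal F_n, \Delta_n^\star)$, obtaining an $\epsilon$-\alc\ model; by Corollary~\ref{cor_can_model} and Corollary~\ref{cor_bounded_sing} the resulting models have $\eta$-lc singularities for a $\eta = \eta(\epsilon)$ independent of $n$, and the push-forward boundaries live in a DCC subset of $[0,1]$ determined by $I$. On these adjoint models the classical ACC for pseudo-effective thresholds of klt pairs, which follows from the ACC for log canonical thresholds \cite[Theorem 1.1]{HMX12}, applies to the adjoint divisor $K_{(Y_n, \cal G_n, \Gamma_n^\star), \epsilon}$. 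This should give a uniform positive lower bound on the admissible ``truncation gap'' needed to preserve bigness, contradicting the fact that $J_n \nearrow I$ forces these gaps to become arbitrarily small on individual components.

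The main obstacle is the passage from the adjoint threshold ACC back to the original statement about $K_{\cal F_n}+\Delta_n$. One needs to ensure that bigness of $K_{\cal F_n}+\Delta'$ for suitable $\Delta' \leq \Delta_n$ is genuinely controlled by the adjoint divisor behaviour on the canonical model, and that the truncation operation $\Delta_n \mapsto \tilde \Delta_n$ commutes meaningfully with running the adjoint MMP; the bounded-singularities statement of Corollary~\ref{cor_bounded_sing} is used here to compare the MMPs of $(X_n, \cal F_n, \Delta_n)$ and of $(X_n, \cal F_n, \tilde \Delta_n)$. Once this bookkeeping is carried out, the classical ACC for pseudo-effective thresholds of klt pairs closes the argument and produces the required contradiction, yielding the finite $J$.
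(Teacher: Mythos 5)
There is a genuine gap at the core of your argument. The mechanism you invoke to close the contradiction --- ``the classical ACC for pseudo-effective thresholds of klt pairs\dots applies to the adjoint divisor $K_{(Y_n, \cal G_n, \Gamma_n^\star), \epsilon}$'' --- is not available: the HMX-type ACC statements concern divisors of the form $K_Y+B$ for log pairs, not divisors of the form $K_{\cal G}+\epsilon K_Y+\Gamma$, and no such ACC for adjoint foliated thresholds is proved in this paper or in the cited literature. Worse, the natural way to make the adjoint divisor a usable proxy for $K_{\cal F}+\Delta'$ --- namely, that bigness of $K_{\cal F}+\Delta$ forces bigness of $K_{(X,\cal F,\Delta),\epsilon}$ for a uniform $\epsilon$ --- is exactly Theorem~\ref{thm_big_psef}/Corollary~\ref{cor_big_thresh_boundary}, whose proof uses Proposition~\ref{prop_shrink_boundary}; so your route is circular unless you supply an independent proof of that transfer, which is the very ``bookkeeping'' you defer. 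You acknowledge this as ``the main obstacle,'' but it is not bookkeeping: it is the missing theorem. A secondary, fixable issue: a DCC set is well-ordered from below but its order type can exceed $\omega$ (e.g.\ $\cal S$ itself has order type $\omega+1$), so an enumeration $i_1<i_2<\dots$ with $J_n\nearrow I$ does not exhaust $I$, and the claim that the truncation gaps $d-j$ become arbitrarily small for the coefficients actually occurring in your counterexamples needs a different selection of the $J_n$.

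For contrast, the paper's proof avoids all of this and never touches the adjoint divisor. It reduces at once to the case where $K_{\cal F}$ is not pseudo-effective; then by \cite{bm16} the foliation is uniruled and, having canonical singularities, is induced by a fibration $X\to B$. On a general fibre $C$ one has a one-dimensional pair $(C,\Theta=\Delta\vert_C)$ with $\Theta\in I$, and effective birationality for curves with DCC coefficients \cite[Theorem 1.3]{HMX12} yields a uniform $m$; rounding each coefficient down to $\min\bigl(I\cap[\tfrac{k-1}{m},\tfrac{k}{m}]\bigr)$ produces a finite $J\subset I$ and $\Delta'\leq\Delta$, $\Delta'\in J$, with $K_C+\Delta'\vert_C$ still ample. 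Since $K_{\cal F}+\Delta'$ is then positive on the general invariant curve, it cannot fail to be pseudo-effective (failure would force a covering family of $(K_{\cal F}+\Delta')$-negative invariant rational curves), and one concludes. If you want to salvage your approach you would need to prove an ACC statement for foliated (adjoint) thresholds from scratch; the fibration-plus-restriction argument is both more elementary and the one the later results of the section are built on.
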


\begin{proof}
If $\Delta=0$, then there is nothing to prove, since $K_\mathcal{F}$ is big in its own right.
Hence, we may assume that $\Delta \neq 0$.
If $K_{\cal F}$ is pseudo-effective then the result is straightforwardly true.
In fact, since $I$ is a DCC set then $I \setminus \{0\}$ admits a minimum, call it $i_1$. It then suffices to take $J=\{i_1\}$: indeed, writing $\Delta = \sum_k \mu_{D_k} D_k$ the decomposition into prime components, then for $\Delta':= \sum_k i_1 D_k$, $0< \Delta' \leq \Delta$, $\Delta' \in J$ and $K_\mathcal{F}+\Delta'$ is big.
Hence, we may assume that $K_{\cal F}$ is not pseudo-effective.

We first claim that there exists a finite set 
$J \subset (I \cup \mathcal S) \setminus \{ 1 \}$ 
such that if 
$(C, \Theta)$ 
is a log canonical pair on a smooth curve with 
$K_C+\Theta$ 
ample and 
$\Theta \in I$,
then there exists 
$\Theta' \leq \Theta$
with 
$\Theta' \in J$ 
so that 
$K_C+\Theta'$ is ample.
Indeed, by~\cite[Theorem 1.3]{HMX12} there exists an $m$, depending only on $I$, so that
the map induced by 
$|m(K_C+\Theta)|= 
|\lfloor m(K_C+\Theta) \rfloor|$ 
is birational.
For all 
$1 \leq k \leq m$, 
we set 
\begin{align*}
J_k \coloneqq 
\begin{cases}
\min \{(I \cup \mathcal S) \cap [\frac{k-1}{m}, \frac{k}{m}]\} & 
\text{if $(I \cup \mathcal S) \cap [\frac{k-1}{m}, \frac{k}{m}] \neq \emptyset$},\\
1 & \text{otherwise}.
\end{cases}
\end{align*}
When 
$(I \cup \mathcal S) \cap [\frac{k-1}{m}, \frac{k}{m}] \neq \emptyset$,
then 
$\min \{(I \cup \mathcal S) \cap [\frac{k-1}{m}, \frac{k}{m}]\}$
is well-defined since $I \cup \mathcal S$ satisfies the DCC.  
Then it suffices to set 
$J \coloneqq \{ J_k\ \vert \ 1 \leq k \leq m\}$ 
and observe that 
$J$ 
satisfies all our required properties.
When $I=\{1\}$, we cannot just work with $I$ but we really need to work with the set $(I \cup \mathcal S) \setminus \{1\}$ to guarantee that $J \subset [0, 1)$. 

By \cite{bm16}, $\cal F$ is uniruled;
moreover, since $\cal F$ has canonical singularities, then there exists a morphism $X \rightarrow B$ inducing $\cal F$.
Let $C$ be a general fibre of $X \rightarrow B$ and set $\Theta := \Delta\vert_C$.
Thus, we may find $\Delta' \leq \Delta$ with $\Delta' \in J$ such that $\Delta'\vert_C =\Theta'$.

We claim that $K_{\cal F}+\Delta'$ is pseudo-effective, from which we may conclude.
To see the claim, observe that if 
$K_{\cal F}+\Delta'$ 
is not pseudo-effective then 
$X$ 
is covered by 
$(K_{\cal F}+\Delta')$-negative 
rational curves tangent to the foliation, see Theorem~\ref{thm_recall_surf_mmp}. 
However, by construction 
$K_{\cal F}+\Delta'$ 
is positive on a general rational curve tangent to 
$\cal F$. 
\end{proof}

\begin{corollary}
\label{cor_big_thresh_boundary}
Fix a DCC set $I \subset [0, 1]$.
Let $(X, \cal F, \Delta)$ be a projective foliated log smooth triple such that 
$X$ is a surface, 
$\mathcal F$ is rank one, 
$K_{\cal F}+\Delta_{n-inv}$ is big,  
and $\Delta \in I$.
Then there exists a positive real number 
$\tau=\tau(I)$ 
such that for all 
$0\leq \epsilon <\tau$, 
$K_{(X, \cal F, \Delta), \epsilon}$ is big.
\end{corollary}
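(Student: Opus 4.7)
My plan is to deduce this corollary directly from Theorem~\ref{thm_big_psef} by a simple convexity argument. The statement of the corollary differs from the theorem only in that it asserts bigness for \emph{every} $\epsilon$ in an interval $(0,\tau)$ rather than for a single value $\tau$. Since bigness is preserved under positive $\mathbb{R}$-linear combinations, it suffices to realize the adjoint divisor at any intermediate parameter as such a combination of two divisors whose bigness is already known.

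More precisely, I would first invoke Theorem~\ref{thm_big_psef} to obtain a positive constant $\tau = \tau(I)$ such that $K_{(X, \cal F, \Delta), \tau}$ is big for every triple $(X, \cal F, \Delta)$ in the class considered. Then, for any $0 < \epsilon < \tau$, I would write
\[
K_{(X, \cal F, \Delta), \epsilon}
= \frac{\tau - \epsilon}{\tau}\bigl(K_{\cal F} + \Delta_{\text{n-inv}}\bigr)
+ \frac{\epsilon}{\tau}\, K_{(X, \cal F, \Delta), \tau},
\]
which is easily verified by expanding both sides using the definition $K_{(X, \cal F, \Delta), s} = (K_{\cal F} + \Delta_{\text{n-inv}}) + s(K_X + \Delta)$. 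Both coefficients are strictly positive, and both summands on the right are big divisors: the first by the hypothesis that $K_{\cal F} + \Delta_{\text{n-inv}}$ is big, and the second by Theorem~\ref{thm_big_psef}. Since a positive $\mathbb{R}$-linear combination of big divisors is big, this gives the conclusion.

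There is no real obstacle here: the entire substantive content of the statement is already in Theorem~\ref{thm_big_psef}, which handles the delicate step of identifying a uniform threshold depending only on the DCC set $I$. The corollary just repackages that conclusion using the elementary fact that the bigness cone is open and convex. The only mild point to note is that the argument requires the hypothesis that $K_{\cal F} + \Delta_{\text{n-inv}}$ is big (not merely pseudo-effective), which is exactly what is assumed. The positivity of the elements of $I$ plays no role in this last step beyond ensuring compatibility with the statement of Theorem~\ref{thm_big_psef}.
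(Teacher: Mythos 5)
Your interpolation step is fine as a piece of mathematics: the identity
\[
K_{(X, \cal F, \Delta), \epsilon} \;=\; \frac{\tau-\epsilon}{\tau}\bigl(K_{\cal F}+\Delta_{\text{n-inv}}\bigr) \;+\; \frac{\epsilon}{\tau}\,K_{(X, \cal F, \Delta), \tau}
\]
does hold, and a positive combination of big divisors is big, so bigness at the single parameter $\tau$ together with bigness of $K_{\cal F}+\Delta_{\text{n-inv}}$ yields bigness for every $0<\epsilon<\tau$. The genuine gap is in where you get bigness at $\tau$ from. You invoke Theorem~\ref{thm_big_psef}, but in this paper that theorem is the announced goal of the section and has no proof independent of the present corollary: the subsection ``Proof of Theorem~\ref{thm_big_psef}'' consists of Lemma~\ref{lem_fin_many_eigs}, Propositions~\ref{prop_bound_for_nice_family}, \ref{prop_boundary_in_finite_set}, \ref{prop_shrink_boundary}, and then Corollary~\ref{cor_big_thresh_boundary} itself, which is the statement that actually delivers the theorem. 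So, as written, your argument is circular within the paper's logic; the substantive content --- the existence of a uniform threshold $\tau(I)$ valid for the whole class of triples --- is exactly what you defer, and it is supplied nowhere else in your proposal.

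The repair is to cite the ingredients rather than the theorem, which is what the paper does. First reduce to $\Delta=\Delta_{\text{n-inv}}$: since $K_{(X,\cal F,\Delta),\epsilon}$ differs from $(K_{\cal F}+\Delta_{\text{n-inv}})+\epsilon(K_X+\Delta_{\text{n-inv}})$ by the effective divisor $\epsilon\Delta_{\text{inv}}$, bigness for the reduced triple suffices. Then apply Proposition~\ref{prop_shrink_boundary} to replace $\Delta$ by some $\Delta'\le\Delta$ with coefficients in a finite subset $J\subset I$ while keeping $K_{\cal F}+\Delta'$ big (again the difference is effective, so it is enough to treat $\Delta'$). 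Finally quote Proposition~\ref{prop_boundary_in_finite_set}, which already asserts bigness of $K_{(X,\cal F,\Delta'),\epsilon}$ for \emph{all} $0\le\epsilon<\tau(J)$, so no convexity step is needed at all. Your observation is still worth recording as a remark: it shows that the ``single value'' statement of Theorem~\ref{thm_big_psef} and the ``all $\epsilon<\tau$'' statement of the corollary are formally equivalent once $K_{\cal F}+\Delta_{\text{n-inv}}$ is big, but it cannot substitute for the uniform-threshold argument carried by Propositions~\ref{prop_shrink_boundary} and~\ref{prop_boundary_in_finite_set}.
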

\begin{proof}
We may assume without loss of generality that $\Delta = \Delta_{n-inv}$.
By Proposition \ref{prop_shrink_boundary} we may assume without loss of generality that there exists a finite subset 
$J \subset (I \cup \mathcal S) \setminus \{1\}$ 
and 
$\Delta' \leq \Delta$
such that 
$\Delta' \in J$ such that 
$K_{\mathcal F}+\Delta'$
is big.
We may then conclude by Proposition \ref{prop_boundary_in_finite_set}, defining 
$\tau\coloneqq \min(\tau_0(J), E(I))$, 
where 
$\tau_0(J)$ 
is the positive real number produced by Proposition~\ref{prop_boundary_in_finite_set} and 
$E=E(I)$ 
is the positive real number defined in Proposition~\ref{cor_bound2}.
\end{proof}

\begin{corollary}
\label{cor_main_corollary}
Fix a DCC set $I \subset [0, 1]$.
Then there exists a positive real number 
$\tau=\tau(I)$ 
such that for all 
$0< \epsilon<\tau$ 
the following statement holds:

Let $(X, \cal F, \Delta)$ be a $\epsilon$-\alc projective foliated triple such that 
$X$ is a surface, 
$K_{\cal F}+\Delta_{n-inv}$ is big, and 
$\Delta \in I$.
There exists an integer $M=M(\epsilon)$ for which $|MK_{(X, \cal F, \Delta), \epsilon}|$ defines a birational map.
\end{corollary}

\begin{proof}
We define $\tau \coloneqq \tau(I)$, where $\tau(I)$ is the positive real number produced by Corollary~\ref{cor_big_thresh_boundary}.

We fix 
$0<\epsilon<\tau$ 
and a projective foliated triple 
$(X, \mathcal F, \Delta)$ 
satisfying the hypotheses of the statement.
As 
$\epsilon< E(I)$ 
by construction, cf. the proof of Corollary~\ref{cor_big_thresh_boundary}, Proposition~\ref{cor_bound2} implies that
$(\mathcal F, \Delta_{{\rm n-inv}})$ 
is log canonical.  
Let 
$p\colon X' \rightarrow X$ 
be a foliated log resolution of 
$(X, \mathcal F, \Delta)$
and let
$\mathcal F' \coloneqq p^{-1}\mathcal F$,
${\rm exc}(p) \coloneqq E$, 
and 
$\Gamma \coloneqq p_\ast^{-1}\Delta+E$.  
Since
$K_{\mathcal F'}+\Gamma_{{\rm n-inv}} = p^\ast (K_{\mathcal F}+\Delta_{{\rm n-inv}})+F$, 
where 
$F\geq 0$, 
then  
$K_{\mathcal F'}+\Gamma_{{\rm n-inv}}$
is big.  
Moreover, 
$K_{(X', \mathcal F', \Gamma), \epsilon} = 
p^\ast K_{(X, \mathcal F, \Delta), \epsilon}+G$, 
where $G \geq 0$.
Hence, for all 
$m \in \mathbb N$, 
$|mK_{(X', \mathcal F', \Gamma), \epsilon}| = 
|mK_{(X, \mathcal F, \Delta), \epsilon}|$, 
and if, for some 
$m \in \mathbb N_{>0}$, 
$|mK_{(X', \mathcal F', \Gamma), \epsilon}|$ 
defines a birational map, then the same holds for 
$|mK_{(X, \mathcal F, \Delta), \epsilon}|$.  
Therefore we are free to replace
$(X, \mathcal F, \Delta)$ 
by 
$(X', \mathcal F', \Gamma)$,
and, thus, we may freely assume that 
$(X, \mathcal F, \Delta)$ 
is a foliated log smooth triple.

By Proposition \ref{prop_shrink_boundary} we may assume without loss of generality that there exists a finite subset 
$J \subset (I\cup \mathcal S) \setminus \{ 1\}$ 
and 
$\Delta' \leq \Delta$ 
with 
$\Delta' \in J$, 
and 
$K_\mathcal{F} +\Delta'$ is big.
Corollary \ref{cor_big_thresh_boundary} in turn implies that
$K_{(X, \cal F, \Delta'), \epsilon}$ is big.
We run a 
$K_{(X, \cal F, \Delta'), \epsilon}$-MMP, 
$\rho\colon X \rightarrow Y$ 
which must terminate with the ample model 
$Y$ 
for 
$K_{(X, \cal F, \Delta'), \epsilon}$, 
see Corollary~\ref{cor_can_model}.
By Corollary \ref{cor_bounded_sing} and the fact that 
$J \subset [0, 1-\delta] \cap I$ 
for some 
$\delta>0$, 
$Y$ 
has 
$\eta$-lc 
singularities for 
$\eta \coloneqq 
\frac{\epsilon\delta}{1+\epsilon} > 0$.

We may then apply Lemma \ref{lem_main_bound} to 
$N \coloneqq 
\frac{1}{\epsilon}\rho_\ast K_{(X, \cal F, \Delta'), \epsilon}$
to conclude, 
since $K_{(X, \cal F, \Delta), \epsilon} \geq K_{(X, \cal F, \Delta'), \epsilon}$.
\end{proof}

\section{Applications}

\subsection{Bounding degrees of curves invariant by foliations}

The following is an improvement on a bound
proven in \cite[Theorem~5.4]{PS16}.

\begin{theorem}
\label{thm_degree_bound}
Let $\tau=\tau(\emptyset)>0$ be the real constant defined within Corollary~\ref{cor_big_thresh_boundary}. 
Then for all positive rational numbers $0<\epsilon<\tau$ there exists positive integer $C = C(\epsilon)$ the following statement holds:

Let $(X, \cal F)$ be a projective foliated pair such that 
\begin{enumerate}
\item 
$X$ is a surface,
\item 
$K_{\mathcal F}$ is big,
\item 
$(X, \mathcal F)$ is $\epsilon$-\acanonical,
\item 
$\cal F$ admits a meromorphic first integral, and 
\item 
\label{item:mer.1st.int.genus}
the closure of a general leaf, $L$, has geometric genus $g$.
\end{enumerate}
Then for any nef divisor $H$ on $X$, 
\begin{align*}
H\cdot L \leq gC( H\cdot (K_{\cal F}+\epsilon K_X)).
\end{align*}
\end{theorem}

\begin{proof}
Let $p\colon X' \rightarrow X$ be a foliated log resolution of $\mathcal F$. 
We define $\mathcal F' = p^{-1}\mathcal F$ and $L' = p_*^{-1}L$.
As $\cal F$ possesses a meromorphic first integral and by \eqref{item:mer.1st.int.genus}, we can assume that $\mathcal F'$ is given by a fibration in curves of genus $g$.

Since $(X, \mathcal F)$ is $\epsilon$-\acanonical 
$K_{\mathcal F'}+\epsilon K_{X'} = p^*(K_{\mathcal F}+\epsilon K_X)+E$
where $E \geq 0$ and is $p$-exceptional, and for all $m \in \mathbb N$, 
\begin{align*}
H^0(X', m(K_{\cal F'} +\epsilon K_{X'}))=
H^0(X, m(K_{\cal F} +\epsilon K_{X})).
\end{align*}
Moreover, by Corollary \ref{cor_main_corollary}, there exists a positive integer $M=M(\epsilon)$, that is, $M$ independent of $X'$ and $\cal F'$, such that $|M(K_{\cal F'}+\epsilon K_{X'})|$
defines a birational map.  
Thus, $h^0(X', lM(K_{\cal F'}+\epsilon K_{X'})) \geq {l+2 \choose 2}$.

As 
$m(K_{\cal F'}+\epsilon K_{X'})\vert_{L'} ) = m(1+\epsilon)K_{L'}$, 
then for all $m >1$, 
$h^0(L', m(K_{\cal F'}+\epsilon K_{X'})\vert_{L'}) \leq m(1+\epsilon)(2g-2)-g+1$.
We therefore have inequalities
\begin{align*}
h^0(X', l(K_{\cal F'}+\epsilon K_{X'})-L') \geq &
h^0(X', l(K_{\cal F'}+\epsilon K_{X'})) \\
- & h^0(L', l(K_{\cal F'}+\epsilon K_{X'}))\vert_{L'})
>1,
\end{align*}
where the latter inequality holds for $l \geq 4gM^2$.
Therefore, there exists $0 \leq D \sim l(K_{\cal F'}+\epsilon K_{X'})-L'$.
Since $H$ is nef, then $0 \leq D\cdot p^*H$. 
Hence, taking $C(\epsilon) \coloneqq 4M^2$, we obtain the desired result.
\end{proof}

\subsection{Lower bound on adjoint volumes}

\begin{theorem}
\label{thm_vol_bound}
Fix a DCC set $I \subset [0,1]$.
Let $\tau=\tau(I)>0$ be the real constant defined within Corollary~\ref{cor_big_thresh_boundary}. 
Then for all $0<\epsilon<\tau$ there exists $0< v(\epsilon)$ such that the following statement holds:

If $(X, \cal F, \Delta)$ is an $\epsilon$-\alc foliated projective triple where $\Delta \in I$, 
$X$ is a surface and $K_{\cal F}+\Delta$ is big then 
\begin{align*}
\mathrm{vol}(K_{(X, \cal F, \Delta), \epsilon}) \geq v(\epsilon).
\end{align*}
\end{theorem}
\begin{proof}
This is a direct consequence of Corollary \ref{cor_main_corollary}.
\end{proof}

\subsection{Upper bound on automorphism group of foliations}

\begin{theorem}
\label{thm_automorphism_bound}
Let $\tau=\tau(\cal S)>0$ be the real constant defined within Corollary~\ref{cor_big_thresh_boundary}. 
Then, for all $0<\epsilon<\tau$ there exists $C = C(\epsilon)$
such that the following holds:

Let $(X, \cal F)$ is a projective foliated pair such that 
\begin{enumerate}
\item 
$X$ is a surface,
\item 
$K_{\mathcal F}$ is big,
\item 
$(X, \mathcal F)$ is $\epsilon$-\acanonical.
\end{enumerate}
Then
\begin{align*}
\# \bir(X, \cal F) \leq C\cdot \mathrm{vol}(K_{(X, \cal F), \epsilon}).
\end{align*}
\end{theorem}
\begin{proof}
By \cite{PS02} we know that $\#\bir(X, \cal F)<+\infty$. 
Possibly replacing $(X, \cal F)$ by a higher model,
we may freely assume that $(X, \cal F)$ is log smooth, 
\begin{align*}
\bir(X, \cal F) = \autom(X, \cal F) = G,
\end{align*}
and that, if 
$Y = X/G$ and $\cal G = \cal F/G$, then $(Y, \cal G, \Delta)$ 
is log smooth, where 
$\Delta = \displaystyle \sum_{D  \text{ prime}}  \frac{r_D-1}{r_D}D$ 
and $r_D$ is the ramification index of $q$ over the prime divisor $D \subset Y$.

By Riemann-Hurwitz and foliated Riemann-Hurwitz, \cite[Lemma 3.4]{Druel18}, then
$K_{(X, \cal F), t} = q^*K_{(Y, \cal G, \Delta), t}$ 
for all 
$t\geq 0$.
Thus, 
$\#G \leq \frac{\volume(K_{(X, \cal F), t})}{\volume(K_{(Y, \cal G, \Delta), t})}$.

Set 
$\tau \coloneqq \tau(\cal S)$ 
and 
$C(\epsilon) \coloneqq \frac{1}{v(\epsilon)}$
for 
$\epsilon<\tau$, 
where $\tau(\cal S)$ and $v(\epsilon)$ are as in Theorem~\ref{thm_vol_bound}.
This gives our desired bound.
\end{proof}

\section{Boundedness of ample models}

In this section we fix $\tau$ to be $\tau(\emptyset)$, the real number whose existence has been shown in Corollary~\ref{cor_big_thresh_boundary}.

\begin{theorem}
\label{thm_bdd_ample_models}
Fix positive real numbers $C$ and $\epsilon$, with $\epsilon < \tau$.

The set $\mathcal{M}_{2, \epsilon, C}$ of foliated pairs $(X, \mathcal F)$ such that 
\begin{enumerate}
\item 
$X$ is a projective klt surface,

\item
$\cal F$ is a rank one foliation on $X$ with $K_\mathcal{F}$ big, 

\item 
$(X, \mathcal F)$ is an $\epsilon$-\acanonical foliated pair, 

\item 
$K_{\cal F}+\epsilon K_X$  is ample, and

\item 
$\mathrm{vol}(X, K_{\cal F}+\epsilon K_X) \leq C$
\end{enumerate}
forms a bounded family.
\end{theorem}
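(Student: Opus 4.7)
My first goal is to control singularities: every $X$ in $\mathcal M_{2,\epsilon,C}$ has $\eta$-lc singularities for some $\eta = \eta(\epsilon) > 0$. Given such an $(X, \mathcal F)$, take a foliated log resolution $\mu\colon \tilde X \to X$ and set $\tilde{\mathcal F} := \mu^{-1}\mathcal F$. Since $(X, \mathcal F)$ is $\epsilon$-\acanonical, the divisor $K_{\tilde{\mathcal F}} + \epsilon K_{\tilde X} - \mu^\ast(K_\mathcal F + \epsilon K_X)$ is effective and $\mu$-exceptional, so $K_{\tilde{\mathcal F}} + \epsilon K_{\tilde X}$ is big. Run the adjoint MMP of Theorem~\ref{thm_adj_mmp} on $(\tilde X, \tilde{\mathcal F})$, then contract the remaining $(K_{\tilde{\mathcal F}} + \epsilon K_{\tilde X})$-trivial curves as in the proof of Corollary~\ref{cor_can_model}. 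By uniqueness of ample models this reproduces $(X, \mathcal F)$, and Corollary~\ref{cor_can_model}(4) (with $\delta = 1$) yields the desired $\eta$-lc bound on $X$.

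Second, setting $A := K_\mathcal F + \epsilon K_X$, I would combine effective birationality with the volume bound to place $\{X\}$ in a bounded family. Corollary~\ref{cor_main_corollary} provides $N = N(\epsilon)$ such that $|NA|$ defines a birational map; since $\mathrm{vol}(NA) = N^2 A^2 \leq N^2 C$, the image has bounded degree, and asymptotic Riemann--Roch on the $\eta$-lc surface $X$ bounds $h^0(X, NA)$, giving birational boundedness. To upgrade to genuine boundedness, I would use that on an $\eta$-lc surface of dimension two the Cartier index of any Weil $\bb Q$-divisor is bounded in terms of $\eta$; hence a uniform multiple $\ell A$ is Cartier, and an effective very-ampleness statement for klt surfaces with bounded index produces an embedding of $X$ as a subvariety of bounded degree in a fixed projective space. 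This places $X$ in a bounded Hilbert scheme.

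Third, to bound the foliation $\mathcal F$ itself I would apply Lemma~\ref{lem:bounded.fols}. Over the bounded family of varieties $\mathcal X \to T$ obtained in the previous step, both $A$ and $K_X$ vary as Weil divisorial sheaves in bounded families, so the same holds for $K_\mathcal F = A - \epsilon K_X$. Lemma~\ref{lem:bounded.fols} then parametrises the rank-one foliations with this prescribed canonical sheaf in a bounded family over $T$, completing the proof.

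I expect the main obstacle to be the passage from birational boundedness to actual boundedness in the second paragraph, i.e.\ upgrading the birational map defined by $|NA|$ to an embedding of $X$ itself. This is where one needs the quantitative inputs -- Cartier-index bounds on $\eta$-lc surfaces together with effective very-ampleness -- that I have packaged above but that, in a complete proof, would require careful cross-referencing with the literature on two-dimensional klt pairs. The remaining steps (singularity control via the adjoint MMP, and the foliation-bounding Lemma~\ref{lem:bounded.fols}) are then comparatively formal.
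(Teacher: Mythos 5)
Your overall strategy (bound $X$ itself directly, then bound the foliation via Lemma~\ref{lem:bounded.fols}) is reasonable, and your first step is fine: since a foliated log resolution of $(X,\mathcal F)$ is $\epsilon$-\acanonical, uniqueness of the $\epsilon$-\acanonical model together with Corollary~\ref{cor_can_model}(4) does give that $X$ is $\eta$-lc with $\eta=\eta(\epsilon)$. The genuine gap is exactly at the point you flag as the ``main obstacle'', and the packaging you propose does not close it: a bounded Cartier index for $\ell A$ plus an ``effective very-ampleness statement'' is not enough to embed $X$ with bounded degree in a fixed projective space. Any Matsusaka-type or adjoint-type effectivity statement for the polarized surface $(X,\ell A)$ requires control of the Hilbert polynomial, i.e.\ not only $A^2\leq C$ but also $K_X\cdot A$ (equivalently $K_{\mathcal F}\cdot A$) and $\chi(\mathcal O_X)$; nothing in your argument bounds these, and bounding such intersection numbers is precisely the technical heart of the paper's proof. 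There, Matsusaka is applied not to $X$ but to the normalized birational image $Y^\nu$ of the map given by $|MK_{(X,\mathcal F),\epsilon}|$, where the relevant degrees are bounded by construction; one then passes to auxiliary models $X^{(2)},X^{(3)}$ over $Y^\nu$ and bounds $K_{X^{(3)}}\cdot p_3^*H_\nu$, $K_{\mathcal F^{(3)}}\cdot p_3^*H_\nu$, etc.\ by differentiating volume functions, the Hodge index theorem, and the results of \cite{Fil} and \cite{MST}. Without an argument of this kind, ``$\eta$-lc plus bounded index'' yields only birational boundedness, not boundedness.

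There is a second gap in your last paragraph: to apply Lemma~\ref{lem:bounded.fols} you need $\mathcal O_X(K_{\mathcal F})$ to be realized by a Weil divisorial sheaf on the total space of the bounded family, and the claim that ``$A$ and $K_X$ vary as Weil divisorial sheaves in bounded families'' is not automatic. Boundedness of $X$ does give $K_X$ in family, and your embedding would give $\mathcal O_X(m\ell A)\simeq\mathcal O_X(1)$; but recovering $A$ itself (hence $K_{\mathcal F}=A-\epsilon K_X$) from $m\ell A$ amounts to extracting an $(m\ell)$-th root in $\mathrm{Cl}(X)$, which is obstructed by torsion/divisibility (e.g.\ $\mathrm{Pic}^0$ may be positive-dimensional), so an extra argument is needed. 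The paper handles this in Step~7 by producing, via Kawamata--Viehweg vanishing and Riemann--Roch, an effective $\Gamma\in|2K_{X^{(3)}}+K_{\mathcal F^{(3)}}+(\cdots)p_3^*H_\nu|$ of bounded degree, deducing boundedness of $(X^{(3)},\mathrm{Supp}\,\Gamma)$ and thereby of the sheaf $\mathcal O(K_{\mathcal F^{(3)}})$ in family, before invoking Lemma~\ref{lem:bounded.fols}. Two smaller points: $\epsilon$ is only a real number, so ``the Cartier index of $A$'' should be replaced by separate index bounds for $K_{\mathcal F}$ and $K_X$; and Corollaries~\ref{cor_main_corollary} and~\ref{cor_can_model} are stated assuming $K_{\mathcal F}$ (plus boundary) big, so invoking them for pairs in $\mathcal M_{2,\epsilon,C}$ needs a word of justification. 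If your direct-embedding route could be completed, it would indeed avoid the paper's Steps~8--9 (spreading the MMP out over the base and using uniqueness of ample models), but as written the key quantitative inputs are missing.
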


\begin{proof}
We shall divide the proof into several steps. 

\medskip

{\bf Step 1}. 
{\it Effective birational boundedness.}

By Corollary~\ref{cor_main_corollary}, we know that if $(X, \mathcal F)$ is one of the pairs in $\mathcal{M}_{2, \epsilon, C}$, then there exists an integer $M=M(\epsilon)$ such that the morphism $|M(K_{(X, \mathcal F), \epsilon})|$ is birational onto the image.

Let $Y$ be the Zariski closure of the image of $X$ under the induced map.
As $\mathrm{vol}(X, K_{(X, \mathcal F), \epsilon}) \leq C$ then $Y$ belongs to a bounded family.
Let $H \in |\mathcal O_Y(1)|$ be a general member.

\medskip

{\bf Step 2}.
{\it Normalization and boundedness.}

Let $\nu \colon Y^\nu \to Y$ be the normalization of $Y$. 
And let $H_\nu=\nu^\ast H$.
Then, also $Y^\nu$ belongs to a bounded family and we can assume that there exists a positive integer $l=l(\epsilon, C)$ such that $lH_\nu$ is very ample by Matsusaka's big theorem for normal surfaces, see~\cite[\S~3]{mats}.

\medskip

{\bf Step 3}. 
{\it Relatively ample model.}

Let $(X^{(1)}, \mathcal F^{(1)})$ be a foliated log resolution of $(X, \mathcal F)$ with morphism $r \colon X^{(1)} \to X$.
The $\epsilon$-\acanonical condition for $(X, \mathcal F)$ implies that 
\begin{align*}
& K_{(X^{(1)}, \mathcal F^{(1)}), \epsilon}=
r^\ast (K_{(X, \mathcal F), \epsilon}) + E,
\quad E \geq 0 \text{ and } \text{$r$-exceptional, and}\\
& H^0(X, mK_{(X, \mathcal F), \epsilon})= 
H^0(X^{(1)}, mK_{(X^{(1)}, \mathcal F^{(1)}), \epsilon}), 
\quad \forall m \in \mathbb N.
\end{align*}

We note that $K_{\mathcal F^{(1)}}$ is pseudo-effective. Indeed, if not then $\mathcal F^{(1)}$ is induced
by a fibration in rational curves $\rho\colon X^{(1)} \rightarrow B$.  If $\Sigma$ is a general
fibre of $\rho$ then $(K_{\mathcal F^{(1)}}+\epsilon K_{X^{(1)}})\cdot \Sigma = -2(1+\epsilon)$, a contradiction 
of the pseudo-effectivity of $K_{\mathcal F^{(1)}}+\epsilon K_{X^{(1)}}$.

By taking a sufficiently high model $X^{(1)}$, we can assume that there exists a morphism $p \colon X^{(1)} \to Y$ which is a resolution of indeterminacies of the rational map $X \dashrightarrow Y$ constructed in Step 1.
As $X^{(1)}$ is smooth, then it automatically factors through the normalization, thus inducing $p_1 \colon X^{(1)} \to Y^\nu$.
We will denote by $\mathcal F_{Y^\nu}$ the strict transform of the foliation on $Y^\nu$.
In view of this, and by construction, cf. Step 1, then 
\begin{align*}
|M(K_{(X^{(1)}, \mathcal F^{(1)}), \epsilon})|= 
p_1^\ast|H_\nu| + F, \quad F \geq 0.
\end{align*}
In particular, for any $t \in \mathbb{R}_{\geq 0}$,
\begin{align*}
\mathrm{vol}(X^{(1)}, K_{(X^{(1)}, \mathcal F^{(1)}), \epsilon} + tp_1^\ast H_\nu)
\leq &
\mathrm{vol}(X^{(1)}, (1+tM)K_{(X^{(1)}, \mathcal F^{(1)}), \epsilon}) \\
\leq &
(1+tM)^2 C.
\end{align*}

Let us run the $K_{(X^{(1)}, \mathcal F^{(1)}), \epsilon}$-MMP relatively over $Y^\nu$, which exists by Corollary~\ref{cor_rel_mmp},
\begin{align*}
\xymatrix{
X^{(1)} \ar[rr]^{s'} \ar[dr]_{p_1}& 
&
X^{(2)} \ar[dl]^{p_2}
\\
&
Y^\nu.
&
}
\end{align*}
Moreover, by Remark~\ref{rmk_rel_ample_model}, we can pass to the ample model (over $Y^\nu$) for the $K_{(X^{(1)}, \mathcal F^{(1)}), \epsilon}$-MMP, that is, we can assume that $X^{(2)}$ satisfies the following conditions:
\begin{enumerate}[label=(\roman*)]
    \item 
$X^{(2)}$ is $\eta$-lc for some $\eta=\eta(\epsilon)>0$;
    \item
$(X^{(2)}, \mathcal F^{(2)})$ is $\epsilon$-\acanonical, where $\mathcal F^{(2)}\coloneqq s'_*\mathcal F^{(1)}$;
    \item
$|MK_{(X^{(2)}, \mathcal F^{(2)}), \epsilon}-p_2^\ast H_\nu| \neq \emptyset$;
    \item 
$K_{(X^{(2)}, \mathcal F^{(2)}), \epsilon}$ is ample over $Y^\nu$;
    \item 
$H^0(X^{(2)}, mK_{(X^{(2)}, \mathcal F^{(2)}), \epsilon})=
H^0(X^{(1)}, mK_{(X^{(1)}, \mathcal F^{(1)}), \epsilon}), 
\ \forall m \in \mathbb N$, 
and for any $t \in \mathbb{R}_{\geq 0}$ 
\begin{align}
\label{eqn.est.vol.X^{(2)}}
& \mathrm{vol}(X^{(2)}, K_{(X^{(2)}, \mathcal F^{(2)}), \epsilon} + tp_2^\ast H_\nu)\\
\nonumber
\leq &
\mathrm{vol}(X^{(2)}, (1+tM)K_{(X^{(2)}, \mathcal F^{(2)}), \epsilon}) 
\leq
(1+tM)^2 C.
\end{align}
\end{enumerate}
Item (iv) and  the Cone Theorem for surface foliations,~\cite[Theorem~6.3]{Spicer17},
imply that $K_{(X^{(2)}, \mathcal F^{(2)}), \epsilon} + (1+\lceil \epsilon \rceil)7p_2^\ast H_\nu$ is ample on $X^{(2)}$. 
Item (v) implies that 
\begin{align*}
\mathrm{vol}(X^{(2)}, K_{(X^{(2)}, \mathcal F^{(2)}), \epsilon} + (1+\lceil \epsilon \rceil)7p_2^\ast H_\nu) \leq
(1+(1+\lceil \epsilon \rceil)7M)^2 C.
\end{align*}

{\bf Step 4}. 
{\it Boundedness of intersection numbers.}

As 
$K_{(X^{(2)}, \mathcal F^{(2)}), \epsilon} + (1+\lceil \epsilon \rceil)7p_2^\ast H_\nu$ 
is ample, then the same holds for 
$K_{(X^{(2)}, \mathcal F^{(2)}), \epsilon} + ((1+\lceil \epsilon \rceil)7+s)p_2^\ast H_\nu$, 
for any $s \in \mathbb{R}_{\geq 0}$.
Denoting
\begin{align*}
f(s)=\mathrm{vol}(X^{(2)}, K_{(X^{(2)}, \mathcal F^{(2)}), \epsilon} + ((1+\lceil \epsilon \rceil)7+s)p_2^\ast H_\nu),
\end{align*}
then for all $0\leq s<1$,
\begin{align}
\label{eqn.est.vol.X^{(2)}.2}
& (1+(2+\lceil \epsilon \rceil)7M)^2 C \geq f(s) - f(0) = \int_0^s f'(x)dx =\\
\nonumber
= & \int_0^s
\frac{d}{dt} \vert_{t=x} 
\left[
\mathrm{vol}(X^{(2)}, K_{(X^{(2)}, \mathcal F^{(2)}), \epsilon} + ((1+\lceil \epsilon \rceil)7+t)p_2^\ast H_\nu) 
\right]
dx
\\
\nonumber
= & \int_0^s
\frac{d}{dt}\vert_{t=x} 
\left[
(K_{(X^{(2)}, \mathcal F^{(2)}), \epsilon} + ((1+\lceil \epsilon \rceil)7+t)p_2^\ast H_\nu)^2 
\right]
dx
\\
\nonumber
= & 
\int_0^s
\left[
2 K_{(X^{(2)}, \mathcal F^{(2)}), \epsilon} \cdot p_2^\ast H_\nu+ 2((1+\lceil \epsilon \rceil)7+x)H_\nu^2
\right]dx
\\
\nonumber
= & \ 
2((K_{\mathcal F^{(2)}} +\epsilon K_{X^{(2)}})\cdot p_2^\ast H_\nu) s + 2H_\nu^2((1+\lceil \epsilon \rceil)7) s + H_\nu^2 s^2.
\end{align}
As $Y^\nu$ is bounded, $lH_\nu$ is very ample and $H_\nu^2 \leq C$, cf.~Step~2,
$H_\nu^2$ can only take finitely many values in the positive integers.
Likewise, by the push-pull formula $K_{X^{(2)}} \cdot p_2^\ast H_\nu = K_{Y^\nu} \cdot H_\nu
$ 
can only take finitely many integral values.
The boundedness of $p_2^\ast H_\nu^2$ and of $K_{X^{(2)}} \cdot p_2^\ast H_\nu$, 
together with the last line of~\eqref{eqn.est.vol.X^{(2)}.2}, implies that also $K_{\mathcal{F}^{(2)}} \cdot p_2^\ast H_\nu$ can only take finitely many values in the positive integers:
the positivity of  $K_{\mathcal{F}^{(2)}} \cdot p_2^\ast H_\nu$ follows from the pseudo-effectivity of $K_{\mathcal F^{(2)}}$, and since $H_\nu$ is big and nef.
Hence, there exists a finite set $L_{t, \epsilon}=L_{t, \epsilon}(t, \epsilon, C) \subset \mathbb N_{>0}+\epsilon \mathbb N_{>0} +t\mathbb N_{>0}$ such that 
\begin{align*}
(K_{(X^{(2)}, \mathcal F^{(2)}), \epsilon} + t p_2^\ast H_\nu) \cdot p_2^\ast H_{\nu} \in L_{t, \epsilon}.
\end{align*}
\medskip
{\bf Step 5}. 
{\it A new model.}

Starting with $X^{(2)}$, we now run the $K_{X^{(2)}}$-MMP over $Y^\nu$ and then pass to the canonical model over $Y^\nu$, which exists by (i) in Step 3,
\begin{align*}
\xymatrix{
X^{(1)} \ar[r]^{s'} \ar[dr]_{p_1}& 
X^{(2)} \ar[d]^{p_2} \ar[r]^{s''} &
X^{(3)} \ar[dl]^{p_3}&
\\
&
Y^\nu
&
}
\end{align*}
Let $\mathcal F^{(3)} \coloneqq s''_*\mathcal F^{(2)}$.
By Step 4 and the push-pull formula, there exists a positive real number $C'_{t, \epsilon}=C'(t, \epsilon, C)$ such that
\begin{align*}
0 < (K_{(X^{(3)}, \mathcal F^{(3)}), \epsilon} + t p_3^\ast H_\nu) \cdot p_3^\ast H_{\nu} \leq C'_{t, \epsilon}.
\end{align*}
Hence, there exists a positive real number $C''_{t, \epsilon}=C''_{t, \epsilon}(t, \epsilon, C)$ such that
\begin{align*}
0 < (K_{(X^{(3)}, \mathcal F^{(3)}), \epsilon} + t p_3^\ast H_\nu)^2 \leq 
\frac{((K_{(X^{(3)}, \mathcal F^{(3)}), \epsilon} + t p_3^\ast H_\nu) \cdot p_3^\ast H_{\nu})^2}{(p_3^\ast H_{\nu})^2} \leq 
C''_{t, \epsilon}.
\end{align*}
where the second inequality holds by the Hodge index theorem, i.e., $(A^2)(B^2) \leq (A\cdot B)^2$
where $A$ is nef divisor.

\medskip

{\bf Step 6}. 
{\it Boundedness of $X^{(3)}$.}

Let us observe that $X^{(3)}$ is $\eta$-lc, by construction.  
We now show that  $X^{(3)}$ belongs to a bounded family.
By the classical version of the Cone Theorem,~\cite[Theorem~3.7]{KM98}, $K_{X^{(3)}}+7p_3^\ast H_\nu$ is ample on $X^{(3)}$;
furthermore, 
\begin{align}
\label{eqn:vol.estim.1}
&\mathrm{vol}(X^{(3)}, K_{X^{(3)}}+7p_3^\ast H_\nu ) 
\\
\nonumber
\leq & 
\frac{1}{\epsilon^2}
\mathrm{vol}(X^{(3)}, K_{(X^{(3)}, \mathcal F^{(3)}), \epsilon}+7\epsilon p_3^\ast H_\nu)
\hspace{.5cm} 
\left[ \text{$K_{\mathcal F^{(3)}}$ is pseudo-effective}\right]
\\
\nonumber
\leq & 
\frac{C''_{7\epsilon, \epsilon}}{\epsilon^2} .
\end{align}
By Bertini's theorem, taking a general (irreducible) divisor $H' \in |7lp_3^\ast H_\nu|$, where $l=l(\epsilon, C)$ is the natural number defined in Step 2, we can ensure that $(X^{(3)}, \frac{H'}{l})$ is $\eta'$-lc, for $\eta':=\min(\eta, \frac 1l)$;
thus,~\cite[Theorem~1.3]{Fil} and~\eqref{eqn:vol.estim.1} together imply that 
$
\mathrm{vol}(X^{(3)}, K_{X^{(3)}}+7p_3^\ast H_\nu)=\mathrm{vol}(X^{(3)}, K_{X^{(3)}}+\frac{H'}{l})
$
belongs to a finite set;
finally, by~\cite[Theorem~6]{MST} $(X^{(3)}, H')$ is bounded.
Thus, for any $s \in \mathbb R_{>0}$,
\begin{align}
\label{eqn:vol.mod.3}
& \mathrm{vol}(X^{(3)}, K_{(X^{(3)}, \mathcal F^{(3)}), \epsilon} + 6 p_3^\ast H_\nu +s (K_{X^{(3)}}+7p_3^\ast H_\nu))
\\
\nonumber
\leq & \
\mathrm{vol}(X^{(3)}, (1+s)K_{(X^{(3)}, \mathcal F^{(3)}), \epsilon} + (6+7s) p_3^\ast H_\nu)
\hspace{.5cm} \left[ \text{$K_{\mathcal F^{(3)}}$ is pseudo-effective}\right]
\\ 
\nonumber
\leq &
(1+s)^2
\mathrm{vol}(X^{(3)}, K_{(X^{(3)}, \mathcal F^{(3)}), \epsilon} + \frac{6+7s}{1+s} p_3^\ast H_\nu)
\\ 
\nonumber
\leq &
(1+s)^2 C''_{\frac{6+7s}{1+s}, \epsilon}
\end{align}

{\bf Step 7}. 
{\it Boundedness of $\mathcal F^{(3)}$.}

By Step 3, the divisor $K_{(X^{(2)}, \mathcal F^{(2)}), \epsilon} + (1+\lceil \epsilon \rceil)7p_2^\ast H$ is ample on $X^{(2)}$, hence its pushforward $K_{(X^{(3)}, \mathcal F^{(3)}), \epsilon} + (1+\lceil \epsilon \rceil)7p_3^\ast H_\nu$ is big and nef on $X^{(3)}$.
\\
As in Step 4, we compute the derivative 
\begin{align*}
\frac{d}{ds} &
\mathrm{vol}(X^{(3)}, K_{(X^{(3)}, \mathcal F^{(3)}), \epsilon} + (1+\lceil \epsilon \rceil)7 p_3^\ast H_\nu +s (K_{X^{(3)}}+7p_3^\ast H_\nu))
\\
=\frac{d}{ds} &
(K_{(X^{(3)}, \mathcal F^{(3)}), \epsilon} + (1+\lceil \epsilon \rceil)7p_3^\ast H_\nu +s (K_{X^{(3)}}+7p_3^\ast H_\nu))^2
\\
= \ \ \ \ & 2 (K_{(X^{(3)}, \mathcal F^{(3)}), \epsilon} +(1+\lceil \epsilon \rceil)7p_3^\ast H_\nu) \cdot (K_{X^{(3)}}+7p_3^\ast H_\nu)\\
& + 2s(K_{X^{(3)}}+7p_3^\ast H_\nu)^2.
\end{align*}
By boundedness and Step 3-4, there exists a positive real number $D=D(\epsilon, C)$ such that $K_{X^{(3)}}^2, K_{X^{(3)}} \cdot p_3^\ast H_\nu, K_{\mathcal F^{(3)}} \cdot p_3^\ast H_\nu$ all belong to the interval $[-D, D]$.
Repeating the same argument  about the derivative of the volume, as in Step 4, then we can show that there exists a positive real number 
$D'=D'(\epsilon, C)$ 
such that 
\begin{align*}
0< K_{\mathcal F^{(3)}} \cdot (K_{X^{(3)}} +7 p_3^\ast H_\nu) \leq D'.
\end{align*}
Since for any $\lambda \in \mathbb N_{>0}$,
\begin{align*}
& 2K_{X^{(3)}}+K_{\mathcal F^{(3)}}+((2+\lceil \epsilon \rceil)7+\lambda)p_3^\ast H_\nu\\
= &
K_{X^{(3)}} + \underbrace{K_{(X^{(3)}, \mathcal F^{(3)}), \epsilon} + (1+\lceil \epsilon \rceil)7p_3^\ast H_\nu}_{\text{big and nef}} + \underbrace{(1-\epsilon)K_{X^{(3)}} + (7+\lambda)p_3^\ast H_\nu}_{\text{ample}},
\end{align*}
Kawamata-Viehweg vanishing implies that for $i=1,2$, 
\begin{align*}
h^i(X^{(3)}, 2K_{X^{(3)}}+K_{\mathcal F^{(3)}}+((2+\lceil \epsilon \rceil)7+\lambda)p_3^\ast H_\nu)=0.
\end{align*}
As $H_\nu$ is Cartier, then $\chi(X^{(3)}, 2K_{X^{(3)}}+K_{\mathcal F^{(3)}}+((2+\lceil \epsilon \rceil)7+\lambda)p_3^\ast H_\nu)$ is a degree $2$ polynomial in $\lambda$, cf.~\cite[Theorem~1.6]{HL}.
Thus, for at least one value of $\lambda \in \{0,1,2\}$,
\begin{align*}
|2K_{X^{(3)}}+K_{\mathcal F^{(3)}}+((2+\lceil \epsilon \rceil)7+\lambda)p_3^\ast H_\nu| \neq \emptyset.
\end{align*}
We fix such value of $\lambda$.  
Let $\Gamma \in |2K_{X^{(3)}}+K_{\mathcal F^{(3)}}+((2+\lceil \epsilon \rceil)7+\lambda)p_3^\ast H_\nu|$, then the argument 
from the start of this step shows that there exists a positive real number $D''=D''(\epsilon, C)$ such that 
\begin{align*}
\Gamma \cdot (K_{X^{(3)}}+7p_3^\ast H_\nu) \leq D''.
\end{align*}
Hence, the couple $(X^{(3)}, \mathrm{Supp} \ \Gamma)$ constructed in this step belongs to a bounded family.
Moreover, as $X^{(3)}$ itself is bounded, then we can conclude that there is a bounded family parametrizing the pairs $(X^{(3)}, \mathcal O_X(\Gamma-2K_{X^{(3)}}-((2+\lceil \epsilon \rceil)7+\lambda)p_3^\ast H_\nu)$ and 
\begin{align*}
O_X(\Gamma-2K_{X^{(3)}}-((2+\lceil \epsilon \rceil)7+\lambda)p_3^\ast H_\nu) \simeq \cal O(K_{\cal F_{X^{(3)}}}).
\end{align*}
The dependence on $\lambda$ here does not constitute an issue, since $\lambda \in \{0, 1, 2\}$; thus, up to working with a larger family, we can assume that all 3 possible values of $\lambda$ are considered.
Hence, we have reconstructed the pair 
$(X^{(3)}, \cal O(K_{\mathcal F_{X^{(3)}}}))$,
where $\mathcal F_{X^{(3)}}$ here is considered as an abstract Weil divisorial sheaf on $X^{(3)}$.
\\
Finally, we want to reconstruct the foliation in a family.
Indeed, the properties we proved so far imply that there exists a projective morphism 
$\mathcal X^{(3)} \rightarrow T$, 
where $T$ is a quasi projective variety
and a Weil divisorial sheaf 
$\widetilde{\mathcal K}$ 
such that for any $(X, \mathcal F)$ in $\mathcal{M}_{2, \epsilon, C}$
there exists $t \in T$ such that 
$(X^{(3)}, \cal O(K_{\mathcal F_{X^{(3)}}})) \cong (\widetilde{\mathcal X}_t, \cal O(\widetilde{\mathcal K}\vert_{\mathcal X_t}))$.
Possibly stratifying $T$ into a disjoint union of locally closed subsets (which does not alter boundedness), we may assume that items (1) and (2) of Lemma~\ref{lem:bounded.fols}
 are satisfied.  By \cite[Th\'eor\`eme 12.2.1 (v)]{EGAIV} and perhaps stratifying further we may assume that item (3) holds as well.
We may then apply 
Lemma~\ref{lem:bounded.fols} and produce a pair given by a normal variety $\mathcal X^{(3)}$ and a rank $1$ foliation $\mathcal F_{\mathcal X^{(3)}}$ on $\mathcal{X}^{(3)}$ and a projective morphism $f_3 \colon \mathcal{X}^{(3)} \to T$ of varieties of finite type such that:
\begin{itemize}
    \item 
$\mathcal F^{(3)}$ is tangent to the fibers of $f_3$; and,
    \item 
for any $(X, \mathcal F) \in \mathcal M_{2, \epsilon, C}$ there exists $t \in T$ and an isomorphism $\psi_t \colon \mathcal X^{(3)}_t \to X^{(3)}$ identifying $\mathcal F^{(3)}$ with $\mathcal F_{\mathcal X^{(3)}} \vert_{\mathcal X^{(3)}_t}$, where $(\mathcal X^{(3)}, \mathcal F^{(3)})$ is the model constructed in Step 5 -- starting from $(X, \mathcal F)$.
\end{itemize}

\medskip

{\bf Step 8}.
{\it 
Resolutions in families:
in this step we show that
then there exists a bounded family $(f_4 \colon \mathcal{X}^{(4)} \to T, \mathcal F_{\mathcal X^{(4)}})$ of foliated pairs such that for any pair $(X, \mathcal F) \in \mathcal M_{2, \epsilon, C}$ there exists $t \in T$ such that
\begin{itemize}
    \item 
$X_t^{(4)}$ is smooth;
    \item 
$(X_t^{(4)}, \mathcal F_t^{(4)})$ is $\epsilon$-\acanonical; and,
    \item 
$(X_t^{(4)}, \mathcal F_t^{(4)})$ is birational to $(X, \mathcal F)$.
\end{itemize}
}

Continuing with the notation from the end of the previous step, up to passing to a 
finite stratification of $T$ into locally closed subsets we can assume that $T$ is smooth. 
By further stratifying the base $T$ into locally closed subsets and taking resolutions in families, we can moreover construct a pair $(f_4 \colon \mathcal{X}^{(4)} \to T, \mathcal F_{\mathcal X^{(4)}})$ such that gives a family of foliations on smooth projective surfaces, in the sense of \cite[Definition~7.1]{PS16}.
Applying \cite[Proposition~7.4]{PS16}, and further stratifying $T$ into locally closed subsets then yields the desired bounded family:
namely, denoting for $t \in T$ the restriction of $\mathcal F_{\mathcal X^{(4)}}$ to $\mxttt$ by $\mfttt$, then the following properties hold:
\begin{enumerate}[label=(\Roman*)]
\item 
for all $t \in T$, $(\mxttt, \mfttt)$ is 
$\epsilon$-\acanonical 
and it is birational to 
$(\mathcal X^{(3)}_t, \mathcal F_t^{(3)})$;

\item 
$\mxtq, g, T$ are all smooth and $T$ is finite type;
\end{enumerate}

\medskip

{\bf Step 9}. 
{\it Properties of the family $(f_4 \colon \mathcal{X}^{(4)} \to T, \mathcal F_{\mathcal X^{(4)}})$}

We define the set $W \subset T$ by
\begin{align*}
W:=\{ t \in T \ \vert & \ (\mathcal X_t^{(4)}, \mathcal F_{\mathcal X_t^{(4)}}) \text{ admits a birational morphism} \\
& (\mathcal X_t^{(4)}, \mathcal F_{\mathcal X_t^{(4)}}) \rightarrow (X, \mathcal F) \text{ to a foliated pair } (X, \mathcal F) \in \mathcal M_{2, \epsilon, C}\}
\end{align*}

Further stratifying $T$ and possibly discarding some components, we can assume that also the following properties hold:

\begin{enumerate}[label=(\Roman*)]
\setcounter{enumi}{2}

\item 
$W \cap \tilde T$ is Zariski dense in each connected component $\tilde T$ of $T$; 
and,

\item 
for all $t \in T$, $K_{(\mxttt, \mfttt), \epsilon}$ is big.
To prove that this property holds, let us recall that for each $t \in W$, $|MK_{(\mxttt, \mfttt), \epsilon}|$ induces a birational map:
indeed, since $(\mxttt, \mfttt)$ is birational equivalent to an $\epsilon$-\acanonical model $(X, \mathcal F) \in \cal M_{2, \epsilon, C}$, then the conclusion follows at once from Corollary~\ref{cor_main_corollary}.
As $W$ is Zariski dense in each connected component of $T$, and $T$ is finite type, by semicontintuity of cohomology groups in family, we can assume that there exists a Zariski dense open subset $W^\circ \subset W$ 
such that for any $t \in W^\circ$, the natural restriction map 
\begin{align*}
H^0(W^\circ, f_\ast \mathcal O_{\mxtq}(MK_{(\mxtq, \mftq), \epsilon}))
\to H^0(\mxttt,\mathcal O_{\mxttt}(MK_{(\mxttt, \mfttt), \epsilon}))\end{align*} 
is surjective.
In particular, this implies that for $t \in W^\circ$ the rational map over $T$ given by 
$f_\ast \mathcal O_{\mxtq}(NK_{(\mxtq, \mftq), \epsilon})$ is birational along $\mxttt$, which then proves the claim, as this is an open condition on $T$.
\end{enumerate}

\medskip

{\bf Step 10}. 
{\it Conclusion. Boundedness of the ample model.}

Let $T_1$ be an irreducible component of $T$ and let $\eta_1 \in T_1$ be its generic point.
Let $\mxtq_{\eta_1}$ be the fibre over $\eta_1$ and $\mxtq_{\bar{\eta}_1}$ the extension of 
scalars to the algebraic closure $\overline{k(T_1)}$ of $k(T_1)$.
We denote by $\mftqe$ the induced foliation on $\mxtqe$. 
By Lemma \ref{lem_easy_ed_crit}, 
$(\mxtqe, \mftqe)$ is $\epsilon$-\acanonical and $K_{(\mxtqe, \mftqe), \epsilon}$ is big.
Hence, by Theorem~\ref{thm_adj_mmp} and Corollary~\ref{cor_can_model}, we can run the 
$K_{(\mxtqe, \mftqe), \epsilon}$-MMP
\begin{align}
\label{eqn:mmp.gen.pt}
\xymatrix{
\mxtqe := \hat{\mathcal X}_0 \ar[r]^{g_0} & 
\hat{\mathcal X}_1 \ar[r]^{g_1} & 
\hat{\mathcal X}_2 \ar[r]^{g_2} & 
\dots \ar[r]^{g_{n-1}} &
\hat{\mathcal X}_n
}
\end{align}
terminating with a projective model 
$(\hat{\mathcal X}_n, \hat{\cal F})$ defined over $\overline{k(T_1)}$ on which  
$K_{(\hat{\mathcal X}_n, \mathcal F_{\hat{\mathcal X}_n}), \epsilon}$ is ample and it has 
$\epsilon$-\acanonical singularities.
As we have a finite number of objects involved in defining all varieties, morphisms, and 
foliations involved in~\eqref{eqn:mmp.gen.pt}, there exists a finite field extension 
$k(T) \subset k'$
over which the $\hat{\mathcal X}_i$, the morphisms $g_i$, 
and the strict transforms $\mathcal F_{\hat{\mathcal X}_i}$ of $\mftqe$ are defined.
Furthermore, as a line bundle $\cal L$ is ample over $k'$ if and only if its extension of scalars to $\overline{k(T_1)}$ is ample over $\overline{k(T_1)}$, then, by construction, $K_{(\hat{\mathcal X}_n, \mathcal F_{\hat{\mathcal X}_n}), \epsilon}$ is defined over $k'$ and already ample on that field.

The discussion in the previous paragraph implies that there exists a dominant \'etale morphism $q \colon U \to T_1$, with $k(U)\simeq k'$ and $q(U) \subset T_1$ open such that
there exist families of foliated surfaces 
$(\widetilde{\mathcal X}_i, \widetilde{\cal F}_i)$ 
projective over $U$ together with morphisms 
$\widetilde{g}_i \colon  \widetilde{\mathcal X}_i \to \widetilde{\mathcal{X}}_{i+1}$ 
and a commutative diagram of morphisms over $U$
\begin{align*}
\xymatrix{
(\widetilde{\mathcal X}_0, \widetilde{\cal F}_0)  
\ar[drr] \ar[r]^{\widetilde{g}_0} & 
(\widetilde{\mathcal X}_1, \widetilde{\cal F}_1) 
\ar[dr] \ar[r]^{\widetilde{g}_1}& 
(\widetilde{\mathcal X}_2, \widetilde{\cal F}_2) \ar[d] \ar[r]^{\widetilde{g}_2}& 
\dots \ar[r]^{\widetilde{g}_{n-1}}&
(\widetilde{\mathcal X}_n, \widetilde{\cal F}_n) \ar[dll] \\
& & U & &
}
\end{align*}

Up to possibly shrinking $U$, we can assume, by construction and the previous observations, that:
\begin{itemize}
\item
for all $i$, the $\widetilde{g}_i$ are all birational morphisms over $U$ and 
$-(K_{(\widetilde{\mathcal X}_i, \widetilde{\cal F}_i), \epsilon})$ is $\widetilde{g}_i$-nef; 
\item
 for all $i$, all fibers of $(\widetilde{\mathcal X}_i, \widetilde{\mathcal F}_i) \to U$ are $\epsilon$-\acanonical, by the negativity lemma applied fibre-wise; 
 \item 
$K_{(\widetilde{\mathcal X}_n, \widetilde{\mathcal F}_n), \epsilon}$ is 
ample when restricted to any fibre of $\widetilde{\mathcal X}_n \to U$.
\end{itemize}
It follows that the fibres of $(\widetilde{\mathcal X}_n, \widetilde{\mathcal F}_n) \to U$ are $\epsilon$-\acanonical models 
and they are birational to the corresponding fibre of 
$(\widetilde{\mathcal X}_0, \widetilde{\mathcal F}_0)$ over the same point on 
$U$. By uniqueness of ample models of $\epsilon$-\acanonical foliated pairs, 
cf. Corollary~\ref{cor_can_model}, they all yield elements of $\mathcal M_{2, \epsilon, C}$.
By Noetherian induction, it suffices to repeat the procedure of this Step a finite number of times, first on each connected component $T_1 \setminus q(U)$ and then on all other connected components to $T$, to show that every element  in $\mathcal M_{2, \epsilon, C}$ must appear as a fibre in the (finitely many) families of surface foliations produced by this process.
\end{proof}

\bibliography{bib.bib}
\bibliographystyle{alpha}
\end{document}